\DeclareFontFamily{OT2}{cmr}{\hyphenchar\font45 }
\DeclareFontShape{OT2}{cmr}{m}{n}{
<5><6><7><8><9>gen*wncyr
<10><10.95><12><14.4><17.28><20.74><24.88>wncyr10}{}
\DeclareFontShape{OT2}{cmr}{b}{n}{
<5><6><7><8><9>gen*wncyb
<10><10.95><12><14.4><17.28><20.74><24.88>wncyb10}{}
\DeclareMathAlphabet{\mathcyr}{OT2}{cmr}{m}{n}
\DeclareMathAlphabet{\mathcyb}{OT2}{cmr}{b}{n}
\SetMathAlphabet{\mathcyr}{bold}{OT2}{cmr}{b}{n}
\newtheorem{theoremcounter}{Theorem Counter}[section]
\theoremstyle{plain}
\newtheorem{theorem}[theoremcounter]{Theorem}
\newtheorem{proposition}[theoremcounter]{Proposition}
\newtheorem{conjecture}[theoremcounter]{Conjecture}
\newtheorem{problem}[theoremcounter]{Problem}
\newtheorem{lemma}[theoremcounter]{Lemma}
\newtheorem{corollary}[theoremcounter]{Corollary}
\theoremstyle{definition}
\newtheorem{definition}[theoremcounter]{Definition}
\theoremstyle{remark}
\newtheorem{remark}[theoremcounter]{Remark}
\newtheorem{example}[theoremcounter]{Example}
\newcommand{\bn}{\boldsymbol{n}}
\newcommand{\dia}{\diamondsuit}
\newcommand{\bc}{\boldsymbol{c}}
\newcommand{\bdelta}{\boldsymbol{\delta}}
\newcommand{\cD}{\mathcal{D}}
\newcommand{\fD}{\mathfrak{D}}
\newcommand{\cF}{\mathsf{F}}
\newcommand{\bk}{\boldsymbol{k}}
\newcommand{\bl}{\boldsymbol{l}}
\newcommand{\cH}{\mathcal{H}}
\newcommand{\cZ}{\mathcal{Z}}
\newcommand{\NN}{\mathbb{N}}
\newcommand{\II}{\mathbb{I}}
\newcommand{\QQ}{\mathbb{Q}}
\newcommand{\RR}{\mathbb{R}}
\newcommand{\ZZ}{\mathbb{Z}}
\newcommand{\dep}{\mathrm{dep}}
\newcommand{\rH}{\mathrm{H}}
\newcommand{\wt}{\mathrm{wt}}
\numberwithin{equation}{section}
\DeclareMathOperator{\spa}{span}
\address{Graduate School of Science and Engineering, Kagoshima University, 1-21-35 Korimoto, \newline Kagoshima, Kagoshima 890-0065, Japan}
\email{hirose@sci.kagoshima-u.ac.jp}
\address{Faculty of Mathematics, Kyushu University, Motooka 744, Nishi-ku, Fukuoka, 819-0395, Japan}
\email{nozaki.takumi.912@s.kyushu-u.ac.jp}
\address{Nagahama Institute of Bio-Science and Technology, 1266, Tamura, Nagahama, Shiga, 526-0829, Japan}
\email{s\_seki@nagahama-i-bio.ac.jp}
\address{College of Arts and Sciences, University of Tokyo, 3-8-1 Komaba, Meguro-ku, Tokyo 153-8914, Japan}
\email{taiki-watanabe@g.ecc.u-tokyo.ac.jp}
\thanks{This research was supported by JSPS KAKENHI Grant Numbers JP22K03244 (Hirose) and JP21K13762 (Seki).}
\keywords{Multiple zeta values, Fibonacci number, MSW formula, Kawashima's relations, connected sum method}
\dedicatory{Dedicated to Professor Masanobu Kaneko on the occasion of his 60+4\textsuperscript{th} birthday}
\title[]{The $\ZZ$-module of multiple zeta values is generated by ones for indices without ones}
\author[]{Minoru Hirose, Takumi Maesaka, Shin-ichiro Seki, and Taiki Watanabe}
\date{}
\begin{document}

\begin{abstract}
We prove that every multiple zeta value is a $\ZZ$-linear combination of $\zeta(k_1,\dots, k_r)$ where $k_i\geq 2$.
Our proof also yields an explicit algorithm for such an expansion.
The key ingredient is to introduce modified multiple harmonic sums that partially satisfy the relations among multiple zeta values and to determine the structure of the space generated by them.
\end{abstract}

\maketitle

\section{Introduction}\label{sec:intro}
\subsection{Previous results and an open problem}
The \emph{multiple zeta value} $\zeta(\bk)$, associated with a tuple of positive integers $\bk=(k_1,\dots, k_r)$ satisfying $k_r\geq 2$ (called an \emph{admissible index}), is defined by
\[
\zeta(\bk)\coloneqq\sum_{0<n_1<\cdots<n_r}\frac{1}{n_1^{k_1}\cdots n_r^{k_r}}.
\]
We call $\wt(\bk)\coloneqq k_1+\cdots+k_r$ the \emph{weight} of an admissible index $\bk$.
For an integer $k\geq 2$, the set of all admissible indices of weight $k$ is denoted by $\mathbb{I}^{\mathrm{adm}}_k$.
Let $\cZ_k=\spa_{\QQ}\{\zeta(\bk) \mid \bk\in\II^{\mathrm{adm}}_k\}$ denote the $\QQ$-vector space spanned by all multiple zeta values of weight $k$.
The space $\cZ_k$ has been extensively studied by many researchers in order to uncover its rich structure.
The dimension $\dim_{\QQ}\cZ_k$ is significantly smaller than $\#\II^{\mathrm{adm}}_k = 2^{k-2}$, and the following was conjectured by Zagier.
Here, for a finite set $S$, let $\#S$ denote the cardinality of $S$.
Let $(d_k)_{k\geq 0}$ be the sequence of integers defined by $d_0=1$, $d_1=0$, $d_2=1$ and the recursion $d_k=d_{k-2}+d_{k-3}$.
Asymptotically, we have $d_k\sim 0.4114\ldots\times(1.3247\ldots)^k$ as $k\to\infty$.
\begin{conjecture}[Zagier~\cite{Zagier1994}]\label{conj:dimension}
For every integer $k\geq 2$, we have
\[
\dim_{\QQ}\cZ_k = d_k.
\]
\end{conjecture}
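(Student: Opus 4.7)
The plan is to split Zagier's equality $\dim_{\QQ}\cZ_k = d_k$ into its two inequalities and address them separately, leveraging the paper's main theorem for the upper bound and reducing the lower bound to the Grothendieck period conjecture. I expect the lower bound to be the main obstacle, lying well beyond present techniques.

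For the upper bound $\dim_{\QQ}\cZ_k \leq d_k$, my strategy is to strengthen the paper's spanning set $\{\zeta(\bk) : k_i \geq 2\}$, of Fibonacci cardinality in weight $k$, down to Hoffman's set $\{\zeta(k_1,\ldots,k_r) : k_i \in \{2,3\}\}$, whose cardinality in weight $k$ is exactly $d_k$ by the same recursion $d_k = d_{k-2} + d_{k-3}$. The required reduction---expressing every $\zeta(\bk)$ in which some $k_i \geq 4$ as a $\QQ$-linear combination of Hoffman terms---is not accessible at the level of real MZVs by any method I know. I would therefore lift the problem to the algebra $\cZ_k^{\mathfrak{m}}$ of motivic multiple zeta values in the Tannakian category of mixed Tate motives over $\ZZ$, equipped with Goncharov's coaction. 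In that setting, Brown's theorem provides exactly the required reduction via an explicit decomposition algorithm driven by the motivic coaction, and pushing the resulting motivic spanning set forward through the period map yields $\dim_{\QQ}\cZ_k \leq d_k$.

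For the lower bound $\dim_{\QQ}\cZ_k \geq d_k$, this is the deep obstacle: for no $k \geq 4$ is even $\dim_{\QQ}\cZ_k \geq 2$ currently known---separating $\zeta(5)$ from $\QQ\cdot\zeta(2)\zeta(3)$ remains open. The approach I would take is the Brown reduction: establish the Grothendieck period conjecture restricted to $\mathrm{MT}(\ZZ)$, which forces injectivity of the period homomorphism $\cZ_k^{\mathfrak{m}} \to \cZ_k$ and so transports the motivic dimension formula to the arithmetic side. The main obstacle is therefore this transcendence-theoretic injectivity. I see no avenue to it within current techniques; it would require a breakthrough analogous to, yet substantially beyond, Baker's theorem on linear forms in logarithms, or a new internal structural mechanism in motivic Galois theory forcing the absence of relations beyond the motivic ones. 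Consequently I expect the algebraic upper bound to be attainable, while the equality itself will remain conjectural until such a transcendence advance appears.
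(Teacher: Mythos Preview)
The statement is a \emph{conjecture}, not a theorem, and the paper does not prove it. Immediately after stating it, the paper remarks that the lower bound $\dim_{\QQ}\cZ_k\geq d_k$ ``involves transcendental difficulties and remains out of reach,'' and records only the upper bound (\cref{thm:DGT}, due to Deligne--Goncharov and Terasoma) and Brown's refinement (\cref{thm:Brown}) as known results. So there is no ``paper's own proof'' to compare against.

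Your discussion is an accurate survey of the landscape rather than a proof, and you are right to conclude that the equality remains conjectural. Two small clarifications: first, the paper's main theorem is strictly weaker than what you would need even for the upper bound---it yields only $\dim_{\QQ}\cZ_k\leq F_{k-1}$, the Fibonacci number, not $d_k$---so ``leveraging the paper's main theorem'' gets you nowhere near Hoffman's set; you correctly pivot to Brown's motivic argument, which is independent of this paper. Second, your framing of the lower bound via injectivity of the period map on $\mathrm{MT}(\ZZ)$ is the standard reduction and is exactly what the paper alludes to when it calls the problem transcendental. There is no gap in your reasoning beyond the one you yourself flag: the conjecture is open.
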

The lower bound estimate $\dim_{\QQ}\cZ_k\geq d_k$ involves transcendental difficulties and remains out of reach.
However, the upper bound estimate was independently proved by Deligne--Goncharov and Terasoma using the theory of mixed Tate motives.
\begin{theorem}[Deligne--Goncharov~\cite{DeligneGoncharov2005}, Terasoma~\cite{Terasoma2002}]\label{thm:DGT}
For every integer $k\geq 2$, we have
\[
\dim_{\QQ}\cZ_k\leq d_k.
\]
\end{theorem}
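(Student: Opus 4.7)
The plan is to realize each multiple zeta value as a period of a mixed Tate motive unramified over $\ZZ$, and then to bound the dimension of the space of all such periods by a structural computation on the associated motivic Galois group. The natural framework is the neutral Tannakian category $\mathrm{MT}(\ZZ)$ of mixed Tate motives over $\mathrm{Spec}(\ZZ)$, constructed inside Voevodsky's (or Levine's) triangulated category of motives.

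First I would attach to $(\mathbb{P}^1\setminus\{0,1,\infty\},\overrightarrow{01},-\overrightarrow{10})$ its pro-unipotent motivic fundamental torsor, which is an ind-object of $\mathrm{MT}(\ZZ)$. The classical iterated-integral representation of $\zeta(\bk)$ against the $1$-forms $dt/t$ and $dt/(t-1)$ along the straight-line path from $0$ to $1$ lifts to a \emph{motivic multiple zeta value} $\zeta^{\mathfrak{m}}(\bk)$ in the graded $\QQ$-Hopf algebra $\cH=\bigoplus_{k\geq 0}\cH_{k}$ of regular functions on this torsor, and the period homomorphism $\mathrm{per}\colon\cH\to\RR$ sends $\zeta^{\mathfrak{m}}(\bk)\mapsto\zeta(\bk)$. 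It therefore suffices to prove $\dim_{\QQ}\cH_{k}\leq d_{k}$.

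Next I would apply the structure theorem for $\mathrm{MT}(\ZZ)$: its Tannakian fundamental group is of the form $\mathbb{G}_{m}\ltimes U$ with $U$ pro-unipotent, and the graded Lie algebra of $U$ is free on a single generator in each odd degree $\geq 3$. The count of generators follows from the motivic identification
\[
\mathrm{Ext}^{1}_{\mathrm{MT}(\ZZ)}(\QQ(0),\QQ(n))\cong K_{2n-1}(\ZZ)\otimes_{\ZZ}\QQ
\]
combined with Borel's theorem, which gives dimension $1$ for odd $n\geq 3$ and $0$ otherwise. The universal enveloping algebra of a free graded Lie algebra with generators in degrees $3,5,7,\ldots$ has Hilbert series $(1-t^{2})/(1-t^{2}-t^{3})$; incorporating the $\mathbb{G}_{m}$-factor (equivalently, the powers of $\zeta^{\mathfrak{m}}(2)$) via a factor $1/(1-t^{2})$ yields
\[
\sum_{k\geq 0}(\dim_{\QQ}\cH_{k})\,t^{k}\;\leq\;\frac{1}{1-t^{2}-t^{3}}\;=\;\sum_{k\geq 0}d_{k}\,t^{k}.
\]

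The crux of the argument is not the above Hilbert-series computation but rather the \emph{unramifiedness} assertion: that the motivic fundamental group of $\mathbb{P}^{1}\setminus\{0,1,\infty\}$ with the chosen tangential base points really lives in $\mathrm{MT}(\ZZ)$, not merely in the a priori larger category $\mathrm{MT}(\QQ)$. This is the hard technical heart of the approach and requires careful control of the good reduction of the moduli spaces $\overline{\mathcal{M}}_{0,n}$ at every prime, together with the existence of $\mathrm{MT}(\ZZ)$ itself as a Tannakian category (which in turn relies on Borel's vanishing in the low-weight $K$-theory of $\ZZ$). Once these ingredients are in place, the upper bound $\dim_{\QQ}\cZ_{k}\leq d_{k}$ drops out of the generating function above.
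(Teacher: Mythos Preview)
Your sketch is a reasonable outline of the Deligne--Goncharov/Terasoma argument via mixed Tate motives and Borel's $K$-theory computation, and it matches the approach the paper itself alludes to when citing this result. Note, however, that the paper does \emph{not} supply its own proof of \cref{thm:DGT}: the theorem is simply stated with attribution as background, so there is no internal proof to compare against.
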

Let $\II_k^{\rH}\coloneqq\{(k_1,\dots,k_r) \in \II^{\mathrm{adm}}_k \mid k_1,\dots, k_r\in\{2,3\}\}$.
It is easy to see that $\#\II_k^{\rH}=d_k$.
Hoffman~\cite{Hoffman1997} conjectured that $\cZ_k$ is generated by $\{\zeta(\bk) \mid \bk\in\II_k^{\rH}\}$, which is called the \emph{Hoffman basis}, and Brown later proved this conjecture using the theory of mixed Tate motives with Zagier's formula for $\zeta(2,\dots,2,3,2,\dots,2)$ (\cite{Zagier2012}).
\begin{theorem}[Brown~\cite{Brown2012}]\label{thm:Brown}
For every integer $k\geq 2$, we have
\[
\cZ_k=\spa_{\QQ}\{\zeta(\bk) \mid \bk\in\II_k^{\rH}\}.
\]
\end{theorem}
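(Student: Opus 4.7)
The plan is to lift the statement to the motivic setting, prove it there via a coaction-based linear independence argument, and descend to $\cZ_k$ via the period map. Concretely, I would let $\fH$ denote the graded $\QQ$-algebra of motivic multiple zeta values $\zeta^{\mathfrak{m}}(\bk)$, equipped with the period morphism $\mathrm{per}\colon\fH\to\RR$ satisfying $\mathrm{per}(\zeta^{\mathfrak{m}}(\bk))=\zeta(\bk)$. The motivic enhancement of \Cref{thm:DGT}, which is the true content of the Deligne--Goncharov--Terasoma result, yields the upper bound $\dim_{\QQ}\fH_k\leq d_k$. Since $\#\II_k^{\rH}=d_k$, it suffices to show that the motivic Hoffman family $\{\zeta^{\mathfrak{m}}(\bk)\mid\bk\in\II_k^{\rH}\}$ is linearly independent in $\fH_k$; then applying $\mathrm{per}$ and using $\cZ_k=\mathrm{per}(\fH_k)$ gives \Cref{thm:Brown}.

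To establish motivic linear independence, I would introduce the reduced coaction $\Delta\colon\fH\to\mathcal{A}\otimes\fH$, where $\mathcal{A}\coloneqq\fH/\zeta^{\mathfrak{m}}(2)\fH$, together with the derivations $\partial_{2r+1}\colon\fH\to\mathcal{L}\otimes\fH$ obtained by projecting $\mathcal{A}$ onto its space of indecomposables $\mathcal{L}$. Following Brown, I would filter $\fH$ by the \emph{level} $\ell$, defined as the number of $3$'s appearing in the Hoffman normal form of an index, and proceed by induction on weight $k$ and, within each weight, on the level $\ell$. The base case $\ell=0$ reduces to $\zeta^{\mathfrak{m}}(2,\dots,2)$, which is a non-zero rational multiple of $\zeta^{\mathfrak{m}}(2)^r$ by the motivic analogue of Euler's formula. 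For the inductive step, one shows that applying $\partial_{2j+1}$ for suitable $j$ to a level-$\ell$ Hoffman element produces terms lying at strictly lower level and smaller weight, to which the inductive hypothesis applies.

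The crux is that the coefficient matrix extracted from these derivations must be of full rank. After unwinding, this reduces to Zagier's explicit evaluation of $\zeta(2,\dots,2,3,2,\dots,2)$ as a linear combination of products $\zeta(2j+1)\zeta(2,\dots,2)$ with prescribed non-zero rational coefficients of Chebyshev-like form. I would invoke Zagier's formula from \cite{Zagier2012} as a black box and extract from it the non-vanishing of the relevant determinant, thereby closing the induction.

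The main obstacle, and the deepest input, is precisely Zagier's formula: its known proofs rely on intricate hypergeometric identities well beyond the formal motivic manipulations. Once that ingredient is granted, the remainder is a careful but essentially formal argument with the coaction on $\fH$. Finally, taking periods sends motivic Hoffman elements to genuine Hoffman multiple zeta values, so spanning in $\fH_k$ descends to spanning in $\cZ_k$, completing the proof.
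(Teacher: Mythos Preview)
The paper does not give its own proof of \Cref{thm:Brown}; it is stated as a known result and attributed to Brown~\cite{Brown2012}, serving only as background and motivation for the paper's own contributions. So there is no ``paper's proof'' to compare against here.

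That said, your outline is a faithful sketch of Brown's original argument: lift to motivic MZVs, use the Deligne--Goncharov--Terasoma dimension bound $\dim_{\QQ}\fH_k\le d_k$, prove linear independence of the motivic Hoffman family by induction on level via the derivations $\partial_{2r+1}$, and invoke Zagier's evaluation of $\zeta(2,\dots,2,3,2,\dots,2)$ to show the relevant coefficient matrix is invertible. One small clarification: what you need from Zagier's formula is not merely non-vanishing of a determinant but that the matrix is invertible over $\QQ$ with $2$-adically controlled entries (Brown shows it is upper-triangular modulo lower level with units on the diagonal), and the motivic lift of Zagier's formula requires an additional argument (Brown deduces it from the real identity plus the known structure of $\fH$). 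These are refinements rather than gaps; your plan is correct in outline.
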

Brown's theorem is stronger than the inequality of Deligne--Goncharov and Terasoma and stands as a landmark result in the theory of multiple zeta values.
By Brown's theorem, any given multiple zeta value $\zeta(\bk)$ of weight $k$ can be expressed as
\[
\zeta(\bk)=\sum_{\bl\in\II_k^{\rH}}c^{\rH}_{\bk;\bl}\zeta(\bl)
\]
with rational coefficients $c^{\rH}_{\bk;\bl}$ (conjecturally, or motivically, uniquely). 
However, Brown’s proof only guarantees the existence of $c^{\rH}_{\bk;\bl}$ and does not provide an algorithm to compute them explicitly.
The following problems concerning $c^{\rH}_{\bk;\bl}$ are fundamental:
\begin{problem}\label{prob:fundamental}
\begin{enumerate}[font=\normalfont]
\item\label{it:algorithm} Find an algorithm to determine $c^{\rH}_{\bk;\bl}$ explicitly.
\item\label{it:odd-conjecture} Resolve the conjecture in \cite[Remark~15]{HiroseSato-pre2}$:$ the denominator of $c^{\rH}_{\bk;\bl}$ is always odd.
\end{enumerate}
\end{problem}
Analogues of \cref{prob:fundamental} have already been settled, for example, in two other settings: \emph{Euler sums} (also known as \emph{alternating multiple zeta values}) and \emph{multiple zeta values in positive characteristic}.
\begin{itemize}[leftmargin=1.5em]
\item Euler sums: Deligne \cite{Deligne2010}, using the theory of mixed Tate motives, proved, before Brown, a theorem corresponding to Brown’s result in the setting of Euler sums.
Later, Hirose--Sato \cite{HiroseSato-pre1} considered expansions with respect to a basis different from Deligne’s, gave an explicit algorithm to compute the corresponding expansion coefficients, and proved that these coefficients always lie in $\ZZ_{(2)}$.
Their proof is based on the confluence relations of level two and does not rely on Deligne's argument.
\item Multiple zeta values in positive characteristic: Todd \cite{Todd2018} formulated a dimension conjecture corresponding to Zagier’s conjecture (\cref{conj:dimension}), and Thakur \cite{Thakur2017} proposed a basis (called \emph{Thakur’s basis}) that is the counterpart of the Hoffman basis as a conjecture.
Ngo Dac \cite{NgoDac2021} subsequently proved the analogue of \cref{prob:fundamental} for expansions with respect to Thakur’s basis.
In this setting no denominators arise; every expansion coefficient belongs to $\mathbb{F}_q[\theta]$.
His proof is based on binary relations for finite sums.
In the positive characteristic case, transcendence techniques are well developed, and Todd's dimension conjecture has been independently settled by Chang--Chen--Mishiba~\cite{ChangChenMishiba2023} and Im--Kim--Le--Ngo Dac--Pham~\cite{Im-etal-2024}.
\end{itemize}
By combining Hirose--Sato’s algorithm for Euler sums with Brown’s result, part \eqref{it:algorithm} of \cref{prob:fundamental} has already been answered affirmatively (\cite[Section~1.3]{HiroseSato-pre1}, see also \cite{Brown2011}).
However, in view of the successes obtained for Euler sums and for multiple zeta values in positive characteristic, it remains an important goal to find an elementary proof in the classical setting that avoids both the theory of mixed Tate motives and Borel’s calculation of $K$-groups and that would settle the remaining part \eqref{it:odd-conjecture} of \cref{prob:fundamental}, which is still open.
\subsection{The first main result}\label{subsec:1st-main}
Let $\II_k^{\geq 2}\coloneqq\{(k_1,\dots,k_r)\in\II^{\mathrm{adm}}_k \mid k_1,\dots, k_r\geq 2\}$, which is a superset of $\II_k^{\rH}$.
In this paper, we consider expansions of multiple zeta values in terms of elements of $\{\zeta(\bk) \mid \bk \in \II_k^{\geq 2}\}$ instead of Hoffman's basis.
First, we recall the following result.
A tuple of positive integers $\bk=(k_1,\dots,k_r)$ that allows $k_r=1$ is called an \emph{index}.
We define the \emph{weight} and \emph{depth} of $\bk$ as $\wt(\bk)\coloneqq k_1+\cdots +k_r$ and $\dep(\bk)\coloneqq r$, respectively.
For positive integers $k$ and $r$ with $r\leq k$, the set of all indices of weight $k$ and depth $r$ is denoted by $\II_{k,r}$.
The notation $\{1\}^m$ denotes the string $1,\dots,1$ in which the entry $1$ is repeated $m$ times.
\begin{theorem}[Kaneko--Sakata~\cite{KanekoSakata2016}]\label{thm:KanekoSakata}
Let $a$ and $b$ be positive integers.
Then, we have
\[
\zeta(\{1\}^{a-1},b+1)=\sum_{r=1}^{\min\{a,b\}}(-1)^{r-1}\sum_{\substack{(c_1,\dots,c_r)\in \II_{a,r} \\ (d_1,\dots, d_r)\in \II_{b,r}}}\zeta(c_1+d_1,\dots, c_r+d_r).
\]
\end{theorem}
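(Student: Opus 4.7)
The plan is to prove the identity by extracting the coefficient of $y^a z^b$ from a single generating-function identity that encodes both sides simultaneously. First I would invoke the classical generating series
\[
\sum_{a,b\geq 1}\zeta(\{1\}^{a-1},b+1)\,y^a z^b \;=\; 1 - \frac{\Gamma(1-y)\Gamma(1-z)}{\Gamma(1-y-z)},
\]
which follows from the Weierstrass expansion $\log\Gamma(1-t)=\gamma t+\sum_{k\geq 2}\zeta(k)t^k/k$ by matching the exponential of $\sum_{k\geq 2}(\zeta(k)/k)(y^k+z^k-(y+z)^k)$ against the iterated-integral evaluation of $\zeta(\{1\}^{a-1},b+1)$. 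Thus the left-hand side of the theorem equals $[y^a z^b]\bigl(1-\Gamma(1-y)\Gamma(1-z)/\Gamma(1-y-z)\bigr)$.

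Second, for each $r\geq 1$ I would introduce the auxiliary sum
\[
I_r(y,z)\;\coloneqq\; \sum_{0<n_1<\cdots <n_r}\prod_{i=1}^{r}\frac{yz}{(n_i-y)(n_i-z)}
\]
and expand geometrically via $y/(n_i-y)=\sum_{c\geq 1}y^c/n_i^c$ and the analogous expansion in $z$. Collecting terms, the coefficient of $y^a z^b$ in $I_r(y,z)$ is exactly $\sum\zeta(c_1+d_1,\dots,c_r+d_r)$, where the sum runs over $(c_1,\dots,c_r)\in\II_{a,r}$ and $(d_1,\dots,d_r)\in\II_{b,r}$, and is automatically zero unless $r\leq \min\{a,b\}$. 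Therefore the right-hand side of Kaneko--Sakata equals $[y^a z^b]\sum_{r\geq 1}(-1)^{r-1}I_r(y,z)$.

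The bridge between the two coefficient computations is the inclusion--exclusion identity
\[
\sum_{r\geq 1}(-1)^{r-1}I_r(y,z) \;=\; 1-\prod_{n\geq 1}\!\left(1-\frac{yz}{(n-y)(n-z)}\right),
\]
combined with the purely algebraic simplification $(n-y)(n-z)-yz=n(n-y-z)$, which telescopes the product into
\[
\prod_{n\geq 1}\frac{n(n-y-z)}{(n-y)(n-z)} \;=\; \frac{\Gamma(1-y)\Gamma(1-z)}{\Gamma(1-y-z)}
\]
via the Weierstrass product for $\Gamma$. Assembling the three pieces, both sides of the theorem appear as the same coefficient $[y^a z^b]\bigl(1-\Gamma(1-y)\Gamma(1-z)/\Gamma(1-y-z)\bigr)$, proving the formula.

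The main obstacle is largely bookkeeping: one must justify the Gamma-function generating series (either by an independent iterated-integral calculation of $\zeta(\{1\}^{a-1},b+1)$, or by citation) and verify that the interchanges of infinite sums and products are legitimate. Both points are handled by working analytically in a small polydisc around $(y,z)=(0,0)$ and then reinterpreting the resulting equality as one of formal power series in $y,z$; beyond the Weierstrass product, the only conceptual ingredient is the elementary telescoping identity $(n-y)(n-z)-yz=n(n-y-z)$.
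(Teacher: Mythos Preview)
Your generating-function argument is correct and complete: the Aomoto--Drinfeld identity for height-one multiple zeta values, the expansion of $I_r(y,z)$, the product expansion $\prod_{n\geq 1}(1-u_n)$, and the telescoping $(n-y)(n-z)-yz=n(n-y-z)$ together with the Weierstrass product all check out, and the convergence issues are handled exactly as you say by working in a small polydisc.

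However, the paper does not actually prove \cref{thm:KanekoSakata}; it is quoted as a known result from \cite{KanekoSakata2016} to motivate the paper's main theorems. The only thing the paper does with it is observe, in the paragraph following \cref{thm:the-algorithm}, that the special case $s=1$ of the recursion defining $\fD$ reduces to the four-term recurrence
\[
\fD(a,b)=\fD(a-1,b)x+\fD(a,b-1)x-\fD(a-1,b-1)x^2-\fD(a-1,b-1)yx
\]
(with boundary cases), and that applying $Z$ recovers the Kaneko--Sakata formula in recursive form. So the paper's implicit route to the statement passes through the entire $\zeta^{\dia}$ machinery (discrete iterated integrals, difference equations for $f_N(\bc)$, \cref{thm:dia-main2}) and then takes the limit $N\to\infty$; it never touches generating functions or the Gamma function.

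Your approach is therefore genuinely different and much more direct for this particular statement: it isolates height one and uses only the classical Gamma generating series, at the cost of being specific to height one and saying nothing about the finite-$N$ refinement. The paper's approach, by contrast, embeds Kaneko--Sakata as the lowest case of a uniform recursion valid for all admissible indices and already at the level of finite sums $\zeta^{\dia}_N$, which is precisely what it needs for the $\ZZ$-span result and the dimension bound $F_{k-1}$.
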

Since $(c_1+d_1,\dots, c_r+d_r)\in\II^{\geq 2}_{a+b}$, Kaneko--Sakata's theorem provides a formula for expanding multiple zeta values of weight $k$ and height~1 in terms of elements of $\II^{\geq 2}_k$.
Here, the number of components greater than 1 is called the \emph{height} of an index.
In general, it is clear from \cref{thm:Brown} that a multiple zeta value of weight $k$ can be expanded in terms of the elements of $\II^{\geq 2}_k$, but what is remarkable about Kaneko--Sakata's theorem is that the coefficients of the expansion are explicitly determined and turn out to be integers.
This result naturally raises the question of whether Kaneko--Sakata's theorem can be extended to general multiple zeta values.
Murahara and Sakata provided the following generalization.
For an index $\bk=(k_1,\dots,k_r)$, we write $\bl\preceq\bk$ if the index $\bl$ is obtained by inserting either a comma ``,'' or a plus ``+'' into each $\square$ in $(k_1 \, \square\cdots\square \, k_r)$.
\begin{theorem}[Murahara--Sakata~\cite{MuraharaSakata2018}]\label{thm:MuraharaSakata}
Let $a$, $b$, and $s$ be positive integers satisfying $a\geq s$, and let $(b_1,\dots, b_s)$ an index in $\II_{b,s}$.
Then, we have
\begin{align*}
&\sum_{(a_1,\dots,a_s)\in\II_{a,s}}\zeta(\{1\}^{a_1-1},b_1+1,\dots,\{1\}^{a_s-1},b_s+1)\\
&=\sum_{r=s}^{\min\{a,b\}}(-1)^{r-s}\sum_{\substack{(c_1,\dots,c_r)\in\II_{a,r} \\ (d_1,\dots,d_r)\succeq(b_1,\dots,b_s)}}\zeta(c_1+d_1,\dots,c_r+d_r).
\end{align*}
\end{theorem}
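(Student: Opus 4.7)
The plan is to establish the theorem via the iterated-integral representation of multiple zeta values, where $\zeta(\bk)$ is expressed as an iterated integral along the standard simplex whose integrand is built from $\omega_0 = dt/t$ and $\omega_1 = dt/(1-t)$ according to the binary word associated with $\bk$. Under this representation, the left-hand side, summed over $(a_1,\ldots,a_s) \in \II_{a,s}$, corresponds to summing iterated-integral words in which the block $\omega_0^{b_j-1}\omega_1$ appears once for each $b_j+1$ and the $a-s$ remaining $\omega_1$-marks are distributed so that exactly $a_j-1$ of them lie in block $j$.

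Next I would reinterpret the right-hand side. For each refinement $(d_1,\ldots,d_r) \succeq (b_1,\ldots,b_s)$ and each $(c_1,\ldots,c_r) \in \II_{a,r}$, the corresponding $\zeta(c_1+d_1,\ldots,c_r+d_r)$ is an iterated integral whose $\omega_1$-marks sit at specified positions. The alternating sum $\sum_r (-1)^{r-s}$ over refinements should enact a M\"obius inversion on the refinement poset of $(b_1,\ldots,b_s)$: it forces certain consecutive $\omega_0$-runs to remain unbroken, thereby collapsing the integrand to precisely the shape required by the LHS.

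The main obstacle will be controlling the interaction between the $\omega_1$-placement combinatorics and the sign structure arising from the refinement inversion. A structurally cleaner route would be to proceed by induction on $s$, taking \cref{thm:KanekoSakata} as the base case $s=1$, and introducing an auxiliary parameterized family (in the spirit of the connected-sum method listed among the paper's keywords) that specializes to both sides under appropriate limits. One would then prove a telescoping transport identity that peels off one block at a time, reducing the $s$-block case to the $(s-1)$-block case. The heart of the argument should amount to a purely combinatorial identity on compositions, provable by finite M\"obius inversion on the refinement poset, which interprets the alternating double sum on the right as the block-preserving sum on the left.
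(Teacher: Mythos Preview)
This theorem is not proved in the paper; it is quoted from \cite{MuraharaSakata2018} as prior work, and the paper only remarks (just after the statement) that the proof there, in Section~3 of that reference, shows these relations follow from the derivation relations of Ihara--Kaneko--Zagier. So there is no ``paper's own proof'' to compare against.

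As for your proposal itself: what you have written is a plan, not a proof. You outline two distinct strategies --- one via iterated integrals and M\"obius inversion on the refinement poset, another via induction on $s$ using a connected-sum telescoping --- but neither is carried to a point where any step can be checked. Phrases like ``should enact'' and ``the heart of the argument should amount to'' mark exactly the places where the actual work would have to occur. In particular, the M\"obius inversion on the refinement poset of $(b_1,\dots,b_s)$ is trivial in isolation, but the RHS also sums freely over $(c_1,\dots,c_r)\in\II_{a,r}$, whose length changes with $r$; the interaction between that sum and the inversion is the entire content of the identity, and you have not addressed it. The induction sketch is likewise only gestural: you have not written down the auxiliary parameterized family, the transport identity, or how it specializes to both sides.

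Both of your suggested routes differ from what Murahara--Sakata actually do: they deduce the identity algebraically from the derivation relations on the Hoffman algebra, rather than through an integral-combinatorial argument or a connected-sum construction.
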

The case $s=1$ corresponds to \cref{thm:KanekoSakata}.
As can be seen from the proof of \cite[Section~3]{MuraharaSakata2018}, Murahara--Sakata's relations are included in the derivation relations (\cite{IharaKanekoZagier2006}).
See \cref{fig:relations}.
\cref{thm:MuraharaSakata} is a natural extension of \cref{thm:KanekoSakata}, but since the left-hand side involves a sum of multiple zeta values, \cref{thm:MuraharaSakata} does not provide a formula for expressing an individual multiple zeta value in terms of elements of $\II^{\geq 2}_k$.
In this paper, we extend Kaneko--Sakata's theorem to individual multiple zeta values without taking sums, and the following is the first main result.
\begin{theorem}\label{thm:main1}
Let $k\geq 2$ be an integer.
For every $\bk \in \II^{\mathrm{adm}}_k$, we have
\[
\zeta(\bk)=\sum_{\bl\in\II^{\geq 2}_k}c_{\bk;\bl}\zeta(\bl),
\]
where $c_{\bk;\bl}$ are certain explicitly given integers.
In particular, we have
\[
\spa_{\ZZ}\{\zeta(\bk)\mid \bk\in\II^{\mathrm{adm}}_k\}=\spa_{\ZZ}\{\zeta(\bk)\mid \bk\in\II^{\geq 2}_k\}.
\]
\end{theorem}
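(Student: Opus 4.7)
The plan is to work with finite-level \emph{modified multiple harmonic sums} $S_N(\bk)$ that serve as integer-coefficient analogues of multiple zeta values. The construction should ensure two properties: (a) $S_N(\bk) \to \zeta(\bk)$ as $N \to \infty$ whenever $\bk$ is admissible, and (b) $S_N(\bk)$ satisfies enough of the relations obeyed by $\zeta(\bk)$ — in particular a Kawashima-type relation and a finite version of the MSW identity behind \cref{thm:MuraharaSakata} — to permit a reduction, with integer coefficients, of any $S_N(\bk)$ to a $\ZZ$-combination of $\{S_N(\bl) : \bl \in \II^{\geq 2}_{\wt(\bk)}\}$. Passing to the limit would then yield \cref{thm:main1} with the same integer coefficients.

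The construction of $S_N(\bk)$ I envisage starts from the usual truncated harmonic sum $\sum_{0<n_1<\cdots<n_r\leq N} n_1^{-k_1}\cdots n_r^{-k_r}$ and adds correction terms, applied only at the entries equal to $1$, of the form suggested by the connected sum method. These corrections must be crafted so that the connector identity used in derivation-type proofs of \cref{thm:MuraharaSakata} survives at the finite level as an identity in $N$ with $\ZZ$-coefficients. The key point is that $S_N(\bk)$ does not need to satisfy every MZV relation — only those that govern the elimination of entries equal to $1$, which is precisely what the abstract means by ``partially satisfy the relations among multiple zeta values.''

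With the finite identities in place, the reduction would be run as follows. Given $\bk$ containing at least one $1$, I would apply the finite MSW-type identity to a prefix of $\bk$ that begins with a block of $1$'s, rewriting $S_N(\bk)$ as an integer combination of $S_N(\bk')$'s in which the total number of $1$'s is strictly smaller. Iterating produces, in finitely many steps, an expansion in terms of $\{S_N(\bl) : \bl \in \II^{\geq 2}\}$. Sending $N \to \infty$ transfers the identity to multiple zeta values, and integrality is preserved because every intermediate step uses integer coefficients. The Fibonacci-type count $\#\II^{\geq 2}_k$ mentioned in the keywords should enter naturally here as a ceiling on the rank of the resulting $\ZZ$-module, and perhaps as the exact rank after the structural analysis is complete.

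The main obstacle lies in the design of $S_N(\bk)$: the relations it must satisfy are tied to specific combinatorial identities (connected sum transport, Kawashima's relations), and there is genuine tension between satisfying enough relations for the reduction to go through and keeping the correction terms simple enough that integrality is manifest at each step. Verifying, in particular, that each application of a finite MSW-type identity removes at least one $1$ without introducing fractional coefficients is the delicate combinatorial heart of the argument. By contrast, the passage to the limit and the verification that $S_N(\bk) \to \zeta(\bk)$ on admissible indices ought to be routine once the construction is fixed.
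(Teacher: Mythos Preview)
Your high-level strategy coincides with the paper's: it constructs exactly the objects you describe, calling them $\zeta^{\dia}_N(\bk)$, with corrections inserted only at the positions where $k_i=1$ (terms of the form $1/(N-n_i)$ at those slots), and verifies $\zeta^{\dia}_N(\bk)\to\zeta(\bk)$. The reduction to $\II^{\geq 2}_k$ is then carried out at the finite level and transferred by taking $N\to\infty$.

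Where your plan diverges from the paper is in the reduction mechanism itself. You propose to apply an MSW-type identity to a prefix beginning with a block of $1$'s so as to decrease the number of $1$'s; the paper does something different. Writing $\bk=(\{1\}^{c_1-1},c_2+1,\dots,\{1\}^{c_{2s-1}-1},c_{2s}+1)$ and $f_N(\bc)=\zeta^{\dia}_N(\bk)$, the paper uses the discrete iterated integral expression (the $\dia$-version of MSW) to compute the \emph{difference} $(\Delta f)_N(\bc)=f_{N+1}(\bc)-f_N(\bc)$ explicitly as a $\ZZ[1/N]$-combination of $f_N$ and $\Delta f_N$ at strictly smaller auxiliary indices $\bc'$. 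Summing this telescoping identity over $N$ produces the recurrence $\fD$, which reduces the \emph{weight of $\bc$}, not the number of $1$'s in $\bk$; each application appends a factor $x^a$ or $yx^{a-1}$ on the right of an element of $\cH^{\geq 2}$. It is not clear that a direct ``remove-a-one-from-a-prefix'' step of the kind you envisage can be made to work while preserving integrality---that is precisely the obstacle you flag, and the paper sidesteps it by telescoping in $N$ rather than by a local rewriting rule.

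Second, you list Kawashima-type relations as one of the ingredients needed for the reduction. In the paper they are not: the proof of \cref{thm:main1} uses only the difference calculation of Section~4. Kawashima's relations for $\zeta^{\dia}$ are proved separately (Section~5, via the connected sum method) and serve only to establish the \emph{second} main theorem, the inclusion $\mathsf{LinKaw}^*\subset\mathsf{Drop1}$. So your plan is overloaded on that side and underspecified on the difference-equation side.
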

Since, as explained in \cref{sec:outline}, our proof is elementary, this result serves as an intermediate step toward the goal stated in the previous subsection. Moreover, it was previously unknown that the coefficients in expansions with respect to $\{\zeta(\bk)\mid\bk\in\II^{\ge 2}_k\}$ can always be taken to be \emph{integers}.

In order to describe the explicit algorithm for determining $c_{\bk;\bl}$, we use the notation of the Hoffman algebra.
Let $\cH\coloneqq\QQ\langle x,y\rangle$ be the non-commutative polynomial ring over $\QQ$ in two variables $x$ and $y$.
The subrings $\cH^0\subset\cH^1\subset\cH$ are defined by $\cH^0\coloneqq\QQ+y\cH x$ and $\cH^1\coloneqq\QQ+y\cH$, respectively.
Furthermore, the subring $\cH^{\geq 2}$ of $\cH^0$ is defined by $\cH^{\geq 2}\coloneqq\QQ+yx\QQ\langle x,yx\rangle$.
A linear mapping $\cD\colon \cH^0\to \cH^{\geq 2}$ is defined by the following recurrence relation.
Here, for an even-depth index $\bc=(c_1,\dots, c_{2s})$, we set $\fD(\bc)\coloneqq \cD(y^{c_1}x^{c_2}\cdots y^{c_{2s-1}}x^{c_{2s}})$.
The relation is
\begin{align*}
\fD(\bc)&=\sum_{\substack{A\subset[2s]^1_{\bc} \\ A: \text{even-odd}}}\sum_{\substack{B\subset[2s]^{>1}_{\bc} \\ \#A+\#B\geq 1 \\ \{2r,2r+1\}\not\subset B \ (r\in[s-1])}}(-1)^{\#B-1}\fD(\bc_{(-A)}-\bdelta_B)x^{\#A+\#B}\\
&\quad +\sum_{\substack{A\subset[2s]^1_{\bc} \\ A: \text{even-odd}}}\sum_{\substack{B\subset[2s]^{>1}_{\bc} \\ \#A+\#B\geq 2 \\ \{2r,2r+1\}\not\subset B \ (r\in[s-1])}}(-1)^{\#B-1}\fD(\bc_{(-A)}-\bdelta_B)yx^{\#A+\#B-1}\\
&\quad +\sum_{\substack{A\subset[2s]^1_{\bc} \\ A: \text{odd-even}}}\sum_{\substack{B\subset[2s]^{>1}_{\bc} \\ \#A+\#B\geq 2 \\ \{2r-1,2r\}\not\subset B \ (r\in[s])}}(-1)^{\#B}\fD(\bc_{(-A)}-\bdelta_B)yx^{\#A+\#B-1}
\end{align*}
with the initial value $\fD(\varnothing)=\cD(1)=1$.
Here we explain some of the notation used.
For a non-negative integer $n$, let $[n]$ denote the set $\{1,2,\dots, n\}$ ($[0]=\varnothing$).
For an index $\bk=(k_1,\dots, k_r)$, let $[r]_{\bk}^1$ denote the set of positions at which $k_i=1$, that is, $[r]_{\bk}^1=\{i\in [r] \mid k_i=1\}$.
Likewise, let $[r]^{>1}_{\bk}$ denote the set of positions at which $k_i>1$, that is, $[r]^{>1}_{\bk}=\{i\in [r] \mid k_i>1\}$.
We say that a subset $A\subset [2s]$ is \emph{even-odd} if whenever an even integer $i$ belongs to $A$, then $i+1$ also belongs to $A$, and whenever an odd integer $i$ belongs to $A$, then $i-1$ also belongs to 
$A$.
We say that a subset $A\subset [2s]$ is \emph{odd-even} if whenever an odd integer $i$ belongs to $A$, then $i+1$ also belongs to $A$, and whenever an even integer $i$ belongs to $A$, then $i-1$ also belongs to 
$A$.
For an index $\bk=(k_1,\dots, k_r)$ and a subset $A\subset[r]$, when the elements of $[r]\setminus A$ are written in increasing order as $i_1, \dots, i_t$, we denote the index $(k_{i_1},\dots, k_{i_t})$ by $\bk_{(-A)}$.
When $A=\varnothing$, we have $\bk_{(-\varnothing)}=\bk$.
For a set $S$ and an element $s\in S$, we define
\[
\delta_{s\in S}\coloneqq\begin{cases} 1 & \text{ if } s\in S, \\ 0 & \text{ otherwise.}\end{cases}
\]
When considering the notation $\bk_{(-A)}=(k_{i_1},\dots, k_{i_t})$ above, for a subset $B\subset[r]$ we define $\bk_{(-A)}-\bdelta_B$ as $(k_{i_1}-\delta_{i_1\in B},\dots, k_{i_t}-\delta_{i_t\in B})$.

For a positive integer $k$, we define $e_k\coloneqq yx^{k-1}$ ($e_1=y$).
Let $Z\colon\cH^0\to\RR$ be the $\QQ$-linear mapping defined by $Z(e_{k_1}\cdots e_{k_r})=\zeta(\bk)$ for each admissible index $\bk=(k_1,\dots, k_r)$ and $Z(1)=1$.
We shall prove the following.
\begin{theorem}\label{thm:the-algorithm}
For positive integers $s$ and $c_1,\dots, c_{2s}$, we have
\[
\zeta(\{1\}^{c_1-1},c_2+1,\dots, \{1\}^{c_{2s-1}-1},c_{2s}+1)=Z(\fD(c_1,\dots, c_{2s})).
\]
\end{theorem}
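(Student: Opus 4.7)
The plan is to reduce the identity $Z(\fD(\bc)) = \zeta(\{1\}^{c_1-1},c_2+1,\dots,\{1\}^{c_{2s-1}-1},c_{2s}+1)$ to a transport identity for a family of truncated ``modified multiple harmonic sums'' and then match the resulting signed combinatorial sum against the three-term recursion defining $\fD$. Following the strategy announced in the abstract, I would first introduce a finite-sum counterpart $\cF(\bc;N)$ depending on a truncation parameter $N$, designed so that $\lim_{N\to\infty}\cF(\bc;N)=\zeta(\{1\}^{c_1-1},c_2+1,\dots,c_{2s}+1)$ on the one hand, and so that $\cF$ admits an alternative expansion, controlled by a partial-fraction or Abel-summation identity, that reproduces $Z(\fD(\bc))$ on the other. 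Because both $c_1$ and $c_2$ enter the same block $y^{c_1}x^{c_2}$, $\cF(\bc;N)$ should encode the $s$ block structure through a product of $2s$ summand factors sharing a running upper variable, in the spirit of the connected-sum method listed among the keywords.

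Second, I would derive a single-step transport rule: a discrete integration-by-parts relation that rewrites $\cF(\bc;N)$ as a signed combination of three classes of modified sums, distinguished by whether the reduction (a)~leaves an $x^{\#A+\#B}$ suffix, (b)~creates a new block with a $y$ attached at an even-odd boundary, or (c)~creates one at an odd-even boundary. The three summands of $\fD(\bc)$ should match these three cases term-by-term. In particular, the ``even-odd'' and ``odd-even'' conditions on the chosen subset $A\subset [2s]^1_{\bc}$ reflect the constraint that a deleted $1$-entry must be fused with its unique admissible neighbour, while the ``interval non-touching'' conditions $\{2r,2r+1\}\not\subset B$ and $\{2r-1,2r\}\not\subset B$ on $B\subset[2s]^{>1}_{\bc}$ encode that simultaneously decrementing two adjacent $>1$-entries across the wrong kind of boundary would produce an inadmissible configuration. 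The sign switch between $(-1)^{\#B-1}$ and $(-1)^{\#B}$ emerges from whether the reduction uses an ``$x$-only'' suffix (one degree of freedom consumed by the outermost letter) or a ``$yx\cdots$'' suffix.

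Iterating the transport rule and sending $N\to\infty$ then yields an expansion of the left-hand side as a signed $\ZZ$-linear combination of MZVs in $\cH^{\geq 2}$, and verifying that the combinatorial data matches the $\fD$ recursion proves the theorem by induction on $s$, or, more uniformly, on $\wt(\bk)-2\dep(\bk)$ (the total ``excess'' of $1$-entries and decrementable slots). The base case corresponds to $s=1$, which is exactly \cref{thm:KanekoSakata}, and the summed identity \cref{thm:MuraharaSakata} of Murahara--Sakata provides an illuminating template: our $\fD$-algorithm may be viewed as an inclusion-exclusion refinement that extracts an individual admissible MZV from Murahara--Sakata's signed sum over $\preceq$-decompositions.

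The hard part will be constructing the correct modified harmonic sum $\cF(\bc;N)$ and proving the precise three-term transport rule whose iteration produces the $\fD$-recursion on the nose. Once the transport rule is in place, the remainder is a careful inclusion-exclusion and sign-bookkeeping argument, but pinning down the exact form of $\cF$ so that the three cases of the reduction correspond bijectively to the three summands of $\fD$---complete with the even-odd/odd-even dichotomy on $A$, the consecutive-pair constraint on $B$, and the correct sign and suffix---is where the technical work concentrates.
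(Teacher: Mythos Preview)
Your overarching plan---introduce a modified multiple harmonic sum $\cF(\bc;N)$ with the correct limit, establish a recursion matching the three summands of $\fD$, and pass to $N\to\infty$---is indeed the paper's strategy, and your reading of the even-odd/odd-even conditions on $A$ and the pair constraints on $B$ is accurate. But the mechanism you point to is off in an important way. The connected-sum method you invoke is used in the paper only for \cref{thm:dia-Kawashima} (Kawashima's relations, \cref{sec:Kawashima}), not for this theorem; likewise, partial fractions, Abel summation, and ``transport rules'' do not drive the argument here.

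The paper's engine is a \emph{difference calculation} in $N$ (\cref{sec:difference}). One sets $f_N(\bc)\coloneqq\zeta^{\dia}_N(\{1\}^{c_1-1},c_2+1,\dots,c_{2s}+1)$ and $g_N(\bc)\coloneqq f_{N+1}(\bc)$, rewrites both via the discrete iterated integral expression of $\zeta^{\dia}_N$ (\cref{cor:dia-discrete_integral}, an extension of the MSW formula), and introduces an auxiliary sum $h_N(\bc)$. Two inclusion-exclusion lemmas (\cref{lem:f-h}, \cref{lem:g-h}) express signed combinations of the $f_N$'s and of the $g_N$'s in terms of $h_N$; inverting and equating yields a closed difference equation for $(\Delta f)_N(\bc)=f_{N+1}(\bc)-f_N(\bc)$ whose three groups of terms match those of $\fD$ verbatim (\cref{thm:difference-eq}). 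Summing over $N$ and inducting on $\wt(\bc)$---not on $s$ nor on an ``excess'' count---finishes the proof of \eqref{eq:dia-D}, from which the theorem follows by $N\to\infty$. The two concrete ingredients your sketch lacks are thus the MSW-type discrete integral representation of $\zeta^{\dia}_N$ and the bridging object $h_N(\bc)$; the even-odd/odd-even dichotomy and the $\{2r,2r\pm1\}\not\subset B$ constraints do not emerge from a single transport or integration-by-parts step, but rather from comparing $f_N$ with $f_{N+1}$ through $h_N$.
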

This equality provides an explicit algorithm for determining the coefficients $c_{\bk;\bl}$.

Strictly speaking, \cref{thm:KanekoSakata} is stated as an explicit closed formula, whereas \cref{thm:main1} with \cref{thm:the-algorithm} is stated as a recurrence formula, so their formulations differ.
Nevertheless, by definition we have
\begin{align*}
\fD(a,b)&=\fD(a-1,b)x+\fD(a,b-1)x-\fD(a-1,b-1)x^2-\fD(a-1,b-1)yx \ \text{if } a>1, b>1,\\
\fD(1,b)&=\fD(1,b-1)x\quad \text{if } a=1, b>1,\\
\fD(a,1)&=\fD(a-1,1)x\quad \text{if } a>1, b=1,\\
\fD(1,1)&=yx\quad \text{if } a=b=1,
\end{align*}
and applying $Z$ to this relations yields the formulation of \cref{thm:KanekoSakata} by the recurrence formula.
In this sense, \cref{thm:main1} can be regarded as a natural generalization of \cref{thm:KanekoSakata}.
Furthermore, although we do not give a direct proof, the inclusions in \cref{fig:relations} indicate that the family of relations obtained by \cref{thm:MuraharaSakata} is contained in the family of relations obtained by \cref{thm:the-algorithm}.
\begin{example}
We present several examples of expansions computed using our algorithm.
\begin{align*}
    \zeta(3,1,4)&=\zeta(5,3)-\zeta(4,4)-\zeta(3,3,2)+\zeta(2,4,2)-2\zeta(3,2,3)-\zeta(2,3,3), \\
    \zeta(4,1,1,2)&= 2\zeta(4,4)+4\zeta(3,5)+2\zeta(3,3,2)+2\zeta(3,2,3)+2\zeta(2,2,4)+\zeta(2,2,2,2),\\
    \zeta(3,1,3,2) &= \zeta(4,5) - 2\zeta(3,6) + \zeta(4,3,2) + \zeta(4,2,3) - 3\zeta(3,3,3) - \zeta(3,2,4) \\
    &\quad + \zeta(2,3,4) - 2\zeta(2,2,5) - 2\zeta(3,2,2,2) + 2\zeta(2,3,2,2) \\
    &\quad - \zeta(2,2,3,2) - 3\zeta(2,2,2,3), \\
    \zeta(1,2,1,1,5) &= \zeta(7,3) - \zeta(6,4) - \zeta(5,5) - \zeta(4,6) - \zeta(3,5,2) - 3\zeta(4,3,3) \\
    &\quad - 5\zeta(3,4,3) - 2\zeta(2,5,3) + 2\zeta(4,2,4) - 2\zeta(3,3,4) \\
    &\quad + \zeta(3,2,5) + \zeta(2,2,6) - \zeta(4,2,2,2) + 2\zeta(3,3,2,2) \\
    &\quad + \zeta(2,4,2,2) + \zeta(2,3,3,2) + \zeta(2,3,2,3) + \zeta(2,2,3,3) \\
    &\quad - \zeta(2,2,2,4) + \zeta(2,2,2,2,2), \\
    \zeta(2,3,1,2,2) &= 2\zeta(2,3,2,3) + \zeta(2,2,2,2,2), \\
    \zeta(2,2,2,1,3) &= \zeta(4,2,2,2) - \zeta(3,3,2,2) - \zeta(2,3,3,2) - \zeta(2,2,3,3) - 4\zeta(2,2,2,2,2), \\
    \zeta(1,1,6,1,2) &= \zeta(8,3) + \zeta(7,4) - 2\zeta(5,6) - 9\zeta(4,7) - 6\zeta(3,8) + \zeta(7,2,2) \\
    &\quad - \zeta(6,3,2) - 2\zeta(5,4,2) - 3\zeta(4,5,2) + 2\zeta(3,6,2) \\
    &\quad - 4\zeta(5,3,3) - 6\zeta(4,4,3) - 4\zeta(3,5,3) - \zeta(2,6,3) \\
    &\quad - \zeta(5,2,4) - 5\zeta(4,3,4) - 3\zeta(3,4,4) - \zeta(2,5,4) \\
    &\quad - 3\zeta(4,2,5) - 2\zeta(3,3,5) + \zeta(2,4,5) - 2\zeta(3,2,6) \\
    &\quad - 2\zeta(2,3,6) - 3\zeta(5,2,2,2) - \zeta(4,3,2,2) + 4\zeta(3,4,2,2) \\
    &\quad - \zeta(2,5,2,2) - 2\zeta(4,2,3,2) + 5\zeta(3,3,3,2) + \zeta(2,4,3,2) \\
    &\quad + 3\zeta(3,2,4,2) + 3\zeta(2,2,5,2) - 3\zeta(4,2,2,3) + 3\zeta(3,3,2,3) \\
    &\quad + \zeta(3,2,3,3) + 2\zeta(2,2,4,3) + 2\zeta(3,2,2,4) - \zeta(2,3,2,4) \\
    &\quad + \zeta(2,2,3,4) + 4\zeta(2,2,2,5) + 6\zeta(3,2,2,2,2) + 3\zeta(2,3,2,2,2) \\
    &\quad + 4\zeta(2,2,3,2,2) + 5\zeta(2,2,2,3,2) + 6\zeta(2,2,2,2,3),\\
    \zeta(3,3,2,1,2) &= 3\zeta(3,3,3,2) + \zeta(3,2,2,2,2) + \zeta(2,2,2,3,2). \\
\end{align*}
\end{example}
The \emph{$t$-interpolated multiple zeta value} $\zeta^t(\bk)$, introduced by Yamamoto (\cite{Yamamoto2013-2}), is a one-variable polynomial in $t$ and is defined for an admissible index 
$\bk$ by
\[
\zeta^t(\bk)\coloneqq\sum_{\bl\preceq\bk}t^{\dep(\bk)-\dep(\bl)}\zeta(\bl)\in\RR[t].
\]
This polynomial interpolates between $\zeta^0(\bk)=\zeta(\bk)$ and $\zeta^1(\bk)\eqqcolon\zeta^{\star}(\bk)$.
Since
\[
\zeta(\bk)=\sum_{\bl\preceq\bk}(-t)^{\dep(\bk)-\dep(\bl)}\zeta^t(\bl)
\]
holds, we have the following corollary from \cref{thm:main1}.
Note that if $\bl\preceq\bk$ and $\bk\in\II^{\geq 2}_k$, then $\bl\in\II^{\geq 2}_k$.
\begin{corollary}\label{cor:star-interpolate}
For every integer $k\geq 2$, we have
\[
\spa_{\ZZ[t]}\{\zeta^t(\bk)\mid \bk\in\II_k^{\mathrm{adm}}\}=\spa_{\ZZ[t]}\{\zeta^t(\bk)\mid \bk\in\II_k^{\geq 2}\}.
\]
In particular, we have
\[
\spa_{\ZZ}\{\zeta^{\star}(\bk)\mid \II^{\mathrm{adm}}_k\}=\spa_{\ZZ}\{\zeta^{\star}(\bk)\mid \bk\in\II_k^{\geq 2}\}.
\]
\end{corollary}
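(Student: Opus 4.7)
The inclusion $\spa_{\ZZ[t]}\{\zeta^t(\bk)\mid\bk\in\II_k^{\geq 2}\}\subseteq\spa_{\ZZ[t]}\{\zeta^t(\bk)\mid\bk\in\II_k^{\mathrm{adm}}\}$ is immediate from $\II_k^{\geq 2}\subset\II_k^{\mathrm{adm}}$, so the plan is to prove the reverse inclusion. Given $\bk\in\II_k^{\mathrm{adm}}$, I would combine three ingredients: the definition of $\zeta^t$, \cref{thm:main1}, and the M\"obius-style inversion formula already displayed just before the corollary.

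Concretely, first expand
\[
\zeta^t(\bk)=\sum_{\bm\preceq\bk}t^{\dep(\bk)-\dep(\bm)}\zeta(\bm).
\]
Merging adjacent entries of an admissible index via ``$+$'' preserves admissibility (the last entry only grows), so every $\bm$ above lies in $\II_k^{\mathrm{adm}}$. Next, apply \cref{thm:main1} to each such $\zeta(\bm)$ to obtain an expansion $\zeta(\bm)=\sum_{\bl'\in\II_k^{\geq 2}}c_{\bm;\bl'}\zeta(\bl')$ with integer coefficients $c_{\bm;\bl'}\in\ZZ$. Finally, substitute the inverse formula $\zeta(\bl')=\sum_{\bl\preceq\bl'}(-t)^{\dep(\bl')-\dep(\bl)}\zeta^t(\bl)$. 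Since merging by ``$+$'' cannot create new $1$'s, each $\bl\preceq\bl'$ again lies in $\II_k^{\geq 2}$, as noted in the excerpt. Collecting terms exhibits $\zeta^t(\bk)$ as an explicit $\ZZ[t]$-linear combination of $\{\zeta^t(\bl)\mid\bl\in\II_k^{\geq 2}\}$, which is the first claim.

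The second identity follows by specializing $t=1$: the $\ZZ[t]$-coefficients become integers and $\zeta^1=\zeta^{\star}$ by definition. There is no serious obstacle in this argument; all of the content is already packaged into \cref{thm:main1}, and what remains is the formal $\preceq$-triangular change of basis between $\zeta$ and $\zeta^t$, together with the (easy) fact that $\II_k^{\geq 2}$ and $\II_k^{\mathrm{adm}}$ are each stable under the relevant direction of $\preceq$. The only minor care needed is to check the above stability so that intermediate terms remain within the correct index sets.
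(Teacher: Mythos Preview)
Your proposal is correct and follows essentially the same approach as the paper. The paper compresses your three steps (expand $\zeta^t$ into $\zeta$, apply \cref{thm:main1}, then invert back via $\zeta(\bl')=\sum_{\bl\preceq\bl'}(-t)^{\dep(\bl')-\dep(\bl)}\zeta^t(\bl)$ together with the closure of $\II_k^{\geq 2}$ under $\preceq$) into a single sentence, but the logic is identical.
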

\begin{remark}
The star analogue $\cZ_k=\spa_{\QQ}\{\zeta^{\star}(\bk) \mid \bk\in\II^{\rH}_k\}$ of Hoffman's conjecture was conjectured by Ihara--Kajikawa--Ohno--Okuda~\cite{IharaKajikawaOhnoOkuda2011}, and it has already been proved by Glanois \cite[Theorem~4.2]{Glanois-pre} with Linebarger--Zhao's identity \cite[Theorem~1.2]{LinebargerZhao2015}.
This does not follow directly from Brown's result.
In contrast, \cref{cor:star-interpolate} follows directly from \cref{thm:main1}.
To the best of the authors’ knowledge, the dimension and generating sets of the space generated by $t$-interpolated multiple zeta values have so far been little studied.
\end{remark}
Let $F_n$ be the $n$th Fibonacci number.
Since $\#\II^{\geq 2}_k=F_{k-1}$ holds, we obtain the following corollary from \cref{thm:main1}.
\begin{corollary}
For every integer $k\geq 2$, we have
\[
\dim_{\QQ}\cZ_k\leq F_{k-1}.
\]
\end{corollary}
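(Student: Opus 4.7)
The plan is to derive the bound directly from \cref{thm:main1}. Passing to $\QQ$-spans in the equality provided by \cref{thm:main1} yields
\[
\cZ_k=\spa_{\QQ}\{\zeta(\bl)\mid\bl\in\II^{\geq 2}_k\},
\]
so that $\dim_{\QQ}\cZ_k\leq\#\II^{\geq 2}_k$. The only remaining task is to verify the combinatorial identity $\#\II^{\geq 2}_k=F_{k-1}$ asserted in the paragraph preceding the corollary.

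For this, I would set $a_k\coloneqq\#\II^{\geq 2}_k$ and argue by recursion on $k$. Partition $\II^{\geq 2}_k$ according to the value of the final component $k_r$: removing a trailing $2$ furnishes a bijection between the subset $\{k_r=2\}$ and $\II^{\geq 2}_{k-2}$ (valid since $r\geq 2$ whenever $k\geq 4$), while subtracting $1$ from a trailing $k_r\geq 3$ furnishes a bijection between $\{k_r\geq 3\}$ and $\II^{\geq 2}_{k-1}$. This yields the Fibonacci-type recursion $a_k=a_{k-1}+a_{k-2}$ for $k\geq 4$, and the base cases $a_2=\#\{(2)\}=1=F_1$ and $a_3=\#\{(3)\}=1=F_2$ identify $a_k$ with $F_{k-1}$ for all $k\geq 2$ by induction.

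The entire mathematical substance of the corollary is contained in \cref{thm:main1}; the step above is routine enumeration, with the only care needed being to align the Fibonacci indexing with the base cases. Consequently there is no real obstacle, and the argument occupies only a few lines.
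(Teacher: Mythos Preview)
Your proposal is correct and follows exactly the paper's approach: the paper states the corollary as an immediate consequence of \cref{thm:main1} together with the identity $\#\II^{\geq 2}_k=F_{k-1}$, which it asserts without proof. Your recursion argument simply supplies the elementary justification for this count that the paper omits.
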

Of course, $F_{k-1}\sim 0.2763\ldots\times(1.6180\ldots)^k$ is greater than $d_k$, so this is a weaker result than \cref{thm:DGT}.
However, our proof is elementary and reduces the dimension using a concrete family of relations, and we believe it is worth mentioning.
\begin{remark}\label{rem:qMZV}
Bachmann conjectured, based on numerical experiments, that the space generated by Bradley--Zhao $q$-multiple zeta values is spanned by ones for indices whose components are all at least $2$.
This conjecture is mentioned in the paper by Brindle \cite[p.29]{Brindle2024}, written under the supervision of Bachmann.
It is an interesting problem whether one can resolve Bachmann’s conjecture by developing the methods presented in the present paper.
\end{remark}
\begin{remark}\label{rem:Ecalle}
After we released the first version of our manuscript, Hidekazu Furusho kindly informed us that Ecalle has also developed an algorithm for expanding multiple zeta values in terms of the set $\{\zeta(\bk) \mid \bk \in \II_k^{\geq 2}\}$.
He refers to this procedure as ``unit-cleansing''; see, for example, \cite[11.6]{Ecalle2011}.
Although his algorithm covers colored multiple zeta values, he notes that the expansion coefficients are rational numbers, and he does not address integrality.
While his proof is built on his mould theory, our approach is based on modified multiple harmonic sums called $\zeta^{\dia}$-values, and the two methods are completely different.
\end{remark}
\subsection{The second main result}
We define the space of relations obtained by \cref{thm:the-algorithm} as
\[
\mathsf{Drop1}\coloneqq\spa_{\QQ}\{w-\cD(w)\mid w\in y\cH x\}\subset\mathrm{Ker}(Z).
\]
We investigate its connections to other well-known families of relations.
Let $\phi$ be the automorphism on $\cH$ defined by $\phi(x)=x+y$ and $\phi(y)=-y$.
The harmonic (or stuffle) product $*$ on $\cH^1$ is defined by $w*1=1*w=w$ for any word $w\in\cH^1$ and $w_1e_{k_1}*w_2e_{k_2}=(w_1*w_2e_{k_2})e_{k_1}+(w_1e_{k_1}*w_2)e_{k_2}+(w_1*w_2)e_{k_1+k_2}$ for any words $w_1$, $w_2\in\cH^1$ and any positive integers $k_1$, $k_2$ with $\QQ$-bilinearity.
Let $\mathsf{LinKaw}$ be the space of the linear part of Kawashima's relations:
\[
\mathsf{LinKaw}\coloneqq\spa_{\QQ}\{\phi(w_1\ast w_2)x \mid w_1, w_2\in y\cH\}.
\]
This space is indeed a family of relations satisfied by multiple zeta values.
\begin{theorem}[{Kawashima~\cite[Corollary~5.5]{Kawashima2009}}]
$\mathsf{LinKaw}\subset\mathrm{Ker}(Z)$.
\end{theorem}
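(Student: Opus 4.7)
The plan follows Kawashima's original generating-function strategy. For each word $w\in y\cH$, introduce an auxiliary single-variable function $\mathcal{K}_w(s)$ attached to the truncated multiple harmonic sums $H_w(N)=\sum_{0<n_1<\cdots<n_r\leq N}1/(n_1^{k_1}\cdots n_r^{k_r})$ of $w=e_{k_1}\cdots e_{k_r}$, for instance via the Newton series $\mathcal{K}_w(s)=\sum_{N\geq 1}(-1)^{N-1}\binom{s}{N}H_w(N)$ (which makes sense when $w$ is admissible and extends by a careful limit/regularization procedure in general). The whole proof then rests on three ingredients.

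\emph{First: a product formula.} For all $w_1,w_2\in y\cH$ we have $\mathcal{K}_{w_1*w_2}(s)=\mathcal{K}_{w_1}(s)\,\mathcal{K}_{w_2}(s)$. At each positive integer $N$ this reduces to the elementary identity $H_{w_1}(N)\cdot H_{w_2}(N)=H_{w_1*w_2}(N)$, which is literally the defining property of the stuffle product $*$; the Newton-series coefficients then assemble this into a functional identity in $s$.

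\emph{Second: a Newton-series bridge between $\mathcal{K}_w$ and $\zeta$-values.} Whenever $w\in y\cH$, the word $\phi(w)x$ lies in $y\cH x$ and is therefore admissible, and one establishes
\[
Z(\phi(w)x)\;=\;-\,\partial_s\mathcal{K}_w(s)\big|_{s=0}
\]
up to an absolute constant. The twist by $\phi$ appears here as the algebraic shadow of the substitution $z\mapsto 1-z$: expanding $Z(\phi(w)x)$ as an iterated integral over $[0,1]$ and rewriting it under $z\mapsto 1-z$ produces exactly the binomial kernel $\binom{s}{N}$ of the Newton series, while the rule $\phi(x)=x+y$, $\phi(y)=-y$ records which of the $1$-forms $dt/t$ and $dt/(1-t)$ are interchanged.

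\emph{Third: an order-of-vanishing argument.} Each factor $\mathcal{K}_{w_i}(s)$ vanishes at $s=0$, because the Newton kernel $\binom{s}{N}$ does so for every $N\geq 1$. Hence $\mathcal{K}_{w_1}(s)\,\mathcal{K}_{w_2}(s)=\mathcal{K}_{w_1*w_2}(s)$ vanishes to order at least two at $s=0$, so its linear coefficient is zero, and the bridge of the second step converts this into $Z(\phi(w_1*w_2)x)=0$, which is exactly the statement $\phi(w_1*w_2)x\in\mathrm{Ker}(Z)$.

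\emph{Main obstacle.} The delicate step is the Newton-series bridge. Establishing the precise analytic identity linking $\mathcal{K}_w(s)$ to $Z(\phi(w)x)$ requires a careful iterated-integral calculation under the involution $z\mapsto 1-z$, together with a convergence analysis showing that the boundary terms at $t=0,1$ vanish precisely because the final letter of $\phi(w)x$ is $x$, so that the resulting index is admissible. Once that identity is in hand, both the product rule and the vanishing-order conclusion are purely formal.
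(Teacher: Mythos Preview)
Your outline follows Kawashima's own strategy, but the first step contains a real gap. The product formula $\mathcal{K}_{w_1*w_2}(s)=\mathcal{K}_{w_1}(s)\,\mathcal{K}_{w_2}(s)$ does \emph{not} follow from the pointwise identity $H_{w_1}(N)H_{w_2}(N)=H_{w_1*w_2}(N)$ in the way you suggest. A Newton series $\sum_{N\ge1}(-1)^{N-1}\binom{s}{N}a_N$ evaluated at a positive integer $M$ returns the binomial transform $\sum_{N=1}^{M}(-1)^{N-1}\binom{M}{N}a_N$, not $a_M$; there is no general mechanism converting $a_Nb_N=c_N$ into a product of the corresponding Newton series. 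In Kawashima's actual argument the crucial input is precisely what the paper's remark in Section~5 calls ``the interpolation property of the Kawashima function'': one first proves, via Hoffman's identity, that $\mathcal{K}_w(M)$ equals a multiple harmonic \emph{star} sum attached to the Hoffman dual $w^{\vee}$; only then does $\mathcal{K}_{w_1}(M)\mathcal{K}_{w_2}(M)$ collapse to a single $\mathcal{K}$-value at each integer $M$, and uniqueness of Newton interpolation (under growth bounds that must themselves be verified) promotes this to a functional identity in $s$. You have omitted exactly this ingredient. Your second step likewise hides nontrivial work: the naive linear coefficient $\sum_{N\ge1}H_w(N)/N$ is divergent, so the bridge to $Z(\phi(w)x)$ cannot be obtained ``up to an absolute constant'' without a genuine regularization.

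The paper itself does not prove this theorem; it simply cites Kawashima. It does, however, supply an independent route in Section~5: it proves a finite-sum analogue for $\zeta^{\dia}$-values (\cref{thm:dia-Kawashima}) via the connected sum method of Seki--Yamamoto, using explicit connectors $C_{N,t}(n,m)$ and transport relations, and recovers the classical statement by letting $N\to\infty$. That argument involves only finite sums and bypasses both Hoffman's identity and the analytic Newton-series machinery altogether.
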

The space $\mathsf{LinKaw}$ does not provide all relations among multiple zeta values, but it is relatively large.
In fact, Kawashima \cite{Kawashima2009} proved that $\mathsf{LinKaw}$ contains Ohno's relations $\mathsf{Ohno}$ (\cite{Ohno1999}), which is a natural generalization of the duality relations $\mathsf{Duality}$, Tanaka--Wakabayashi \cite{TanakaWakabayashi2010} proved that $\mathsf{LinKaw}$ contains the cyclic sum formula $\mathsf{CycSum}$ (\cite{HoffmanOhno2003}), and Tanaka \cite{Tanaka2009} proved that $\mathsf{LinKaw}$ contains the quasi-derivation relations $\mathsf{QuaDer}$.
Relatedly, Ihara--Kaneko--Zagier \cite{IharaKanekoZagier2006} proved that the derivation relations $\mathsf{Der}$ are contained in Ohno's relations.
See \cref{fig:relations}.
Hirose--Murahara--Onozuka~\cite{HiroseMuraharaOnozuka2021} proved that a certain special family of relations coincides nontrivially with $\mathsf{LinKaw}$, thereby providing another proof that $\mathsf{Duality}\subset\mathsf{LinKaw}$ and $\mathsf{Der}\subset\mathsf{LinKaw}$.
They \cite{HiroseMuraharaOnozuka2023} further proved that the family of relations satisfied by parameterized multiple zeta values is exhausted by $\mathsf{LinKaw}$.
Bachmann--Tanaka~\cite{BachmannTanaka2020} proved that the independently motivated family known as the \emph{rooted tree map relations} also coincides nontrivially with $\mathsf{LinKaw}$.

We consider the following family of relations which is a slight extension of $\mathsf{LinKaw}$:
\[
\mathsf{LinKaw}^*\coloneqq\spa_{\QQ}\{(\phi(w_1\ast w_2)x)*w_3 \mid w_1, w_2\in y\cH, w_3\in\cH^{\geq 2}\}.
\]
By the harmonic product formula for multiple zeta values, this is also a subspace of $\mathrm{Ker}(Z)$.
The restriction on the range of $w_3$ is due to the fact that $\zeta^{\dia}$-values, defined in the next section, satisfy the restricted harmonic product formula (\cref{prop:dia-harmonic}).

Based on numerical experiments up to weight 17, we conjecture the following equality between families of relations.
\begin{conjecture}
$\mathsf{LinKaw}^*=\mathsf{Drop1}$.
\end{conjecture}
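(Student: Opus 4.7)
The plan is to establish the two inclusions $\mathsf{Drop1}\subseteq\mathsf{LinKaw}^{*}$ and $\mathsf{LinKaw}^{*}\subseteq\mathsf{Drop1}$ separately, using the $\zeta^{\dia}$-values promised in the abstract and \cref{rem:Ecalle} as the technical bridge between the purely combinatorial map $\cD$ and the algebraic generators of $\mathsf{LinKaw}^{*}$.

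\textbf{First inclusion.} I would induct on the number of ones in $w\in y\cH x$ (equivalently on $\#[2s]^{1}_{\bc}$). The $\fD$-recursion splits into three blocks indexed by ``even--odd'' or ``odd--even'' subsets $A\subset[2s]^{1}_{\bc}$ together with an arbitrary subset $B\subset[2s]^{>1}_{\bc}$, and the alternating signs $(-1)^{\#B-1}$ and $(-1)^{\#B}$ are precisely the signature produced by $\phi$ applied to a stuffle of two words in $y\cH$. For each individual recursion step I would exhibit explicit $w_1,w_2\in y\cH$ and $w_3\in\cH^{\geq 2}$ --- built from the consecutive runs $y^{c_{2r-1}}x^{c_{2r}}y^{c_{2r+1}}x^{c_{2r+2}}$ surrounding the chosen positions --- so that $(\phi(w_1\ast w_2)x)\ast w_3$ reproduces the relevant slice of $w-\cD(w)$ modulo terms of strictly smaller one-count, which lie in $\mathsf{LinKaw}^{*}$ by the inductive hypothesis.

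\textbf{Second inclusion.} The strategy here is to realize $\mathsf{Drop1}$ as the kernel of a $\QQ$-linear projection $\pi\colon\cH^{0}\twoheadrightarrow\cH^{\geq 2}$ that agrees with $\cD$ on $y\cH x$ and with the identity on $\cH^{\geq 2}$, and then to check that every generator $(\phi(w_1\ast w_2)x)\ast w_3$ of $\mathsf{LinKaw}^{*}$ lies in $\mathrm{Ker}(\pi)$. This requires three verifications: (a) compatibility of $\cD$ with the restricted stuffle (which should follow from \cref{prop:dia-harmonic}); (b) the vanishing $\pi(\phi(w_1\ast w_2)x)=0$, obtained by translating Kawashima's proof of $\phi(w_1\ast w_2)x\in\mathrm{Ker}(Z)$ into a combinatorial identity for $\fD$; and (c) the fact that right $\ast$-multiplication by $w_3\in\cH^{\geq 2}$ commutes with $\pi$.

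\textbf{Main obstacle.} The genuinely hard step is (b): proving that $\cD(\phi(w_1\ast w_2)x)$ vanishes identically in $\cH^{\geq 2}$. Kawashima's original argument rests on an iterated-integral representation tailored to real multiple zeta values, whereas $\cD$ is defined by a purely combinatorial recursion on admissible words; bridging the two requires matching the shuffle-and-stuffle symmetry exploited by Kawashima against the even--odd/odd--even bookkeeping built into $\fD$. A secondary obstacle is making sure intermediate expressions never leave the domain $\cH^{0}$ on which $\cD$ is defined, which is why the restriction $w_3\in\cH^{\geq 2}$ is essential. I would expect a full proof to hinge on a stronger structural assertion --- namely that $\cD$ realizes a canonical section of the quotient map $\cH^{0}\to\cH^{0}/\mathsf{LinKaw}^{*}$ --- which would simultaneously explain the numerical coincidence up to weight~$17$ and the conjectured equality.
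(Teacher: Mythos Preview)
The statement you are attempting to prove is explicitly presented in the paper as a \emph{conjecture}, not a theorem. The paper says it is supported only by numerical experiments up to weight~$17$, and then proves just one of the two inclusions, namely $\mathsf{LinKaw}^{*}\subset\mathsf{Drop1}$ (this is \cref{thm:main2}). The reverse inclusion $\mathsf{Drop1}\subset\mathsf{LinKaw}^{*}$ is left open. Consequently, your ``First inclusion'' is not something the paper proves at all, and your sketch for it --- inducting on the one-count and claiming that the even--odd/odd--even bookkeeping in the $\fD$-recursion matches the sign pattern of $\phi$ applied to a stuffle --- is far too vague to constitute a proof of an open problem. In particular, you never explain how to choose the words $w_1,w_2,w_3$ from a single recursion step, and there is no reason to expect each slice of $w-\cD(w)$ to individually lie in $\mathsf{LinKaw}^{*}$; at best one could hope for this after a global rearrangement whose existence is exactly the content of the conjecture.

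For the direction the paper does prove, your plan is in the right spirit but differs in execution. You propose to work purely algebraically with a projection $\pi$ and to show $\cD(\phi(w_1\ast w_2)x)=0$ by a combinatorial translation of Kawashima's argument. The paper instead passes through the $\zeta^{\dia}$-values: it first shows $\mathrm{Ker}(Z^{\dia})=\mathsf{Drop1}$ (\cref{thm:dia-Drop1}), using the injectivity of $Z^{\rH}$ on $\cH^{\geq 2}$, and then proves that the $\zeta^{\dia}$-values themselves satisfy Kawashima's relations (\cref{thm:dia-Kawashima}) via the connected sum method of Seki--Yamamoto, together with the restricted harmonic product (\cref{prop:dia-harmonic}). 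This is precisely your ``hard step~(b)'', but realized as an identity of finite sums rather than as a combinatorial identity for $\fD$. Your observation that $w_3\in\cH^{\geq 2}$ is essential is correct and matches the paper's reasoning.
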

Regarding this equality, we prove one of the inclusions, which is the second main result.
\begin{theorem}\label{thm:main2}
$\mathsf{LinKaw}^*\subset\mathsf{Drop1}$.
\end{theorem}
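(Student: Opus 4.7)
The plan is to reformulate $\mathsf{Drop1}$ as the kernel of $\cD$ and then verify that $\cD$ annihilates every generator of $\mathsf{LinKaw}^*$. First, I would show that $\cD$ acts as the identity on $\cH^{\geq 2}$; this can be read off from the recursive definition of $\fD$, since for $\bc$ of the form $(1,c_2,\ldots,1,c_{2s})$ with each $c_{2i}\geq 2$ the only valid even-odd and odd-even subsets $A$ are empty, and the second and third sums cancel, leaving a telescoping recursion whose output coincides with the input word. Consequently $\cD$ is idempotent, so $\mathrm{id}-\cD$ is a projection, and
\[
\mathsf{Drop1}=(\mathrm{id}-\cD)(y\cH x)=\Ker(\cD)\cap (y\cH x).
\]
It therefore suffices to establish that $\cD(v)=0$ for every generator $v=(\phi(w_1\ast w_2)x)\ast w_3$ of $\mathsf{LinKaw}^*$.

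To this end, I would isolate two algebraic compatibility identities for $\cD$:
\begin{itemize}
\item[(A)] $\cD(\phi(w_1\ast w_2)x)=0$ in $\cH^{\geq 2}$ for all $w_1, w_2\in y\cH$;
\item[(B)] $\cD(u\ast v)=\cD(u)\ast v$ in $\cH^{\geq 2}$ for all $u\in y\cH x$ and $v\in \cH^{\geq 2}$.
\end{itemize}
Given these, $\cD\bigl((\phi(w_1\ast w_2)x)\ast w_3\bigr)=\cD(\phi(w_1\ast w_2)x)\ast w_3=0$ by (B) and then (A), and the theorem follows at once.

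Both (A) and (B) should flow from the structural theory of the modified harmonic sums $\zeta^{\dia}$ developed elsewhere in the paper. By construction, $\zeta^{\dia}$ satisfies the full linear Kawashima vanishing $\zeta^{\dia}(\phi(w_1\ast w_2)x)=0$ and the restricted harmonic product $\zeta^{\dia}(u\ast v)=\zeta^{\dia}(u)\zeta^{\dia}(v)$ when $v\in\cH^{\geq 2}$, while $\cD$ is designed so that $Z\circ\cD=\zeta^{\dia}$ on admissible words. The structure theorem on the $\QQ$-module spanned by the $\zeta^{\dia}$-values, which is the technical heart of the paper, then lifts each of these analytic identities to a word-level identity in $\cH^{\geq 2}$, yielding (A) and (B).

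The main obstacle is (A): the analytic vanishing of $\zeta^{\dia}$ on Kawashima elements is essentially automatic, but one must upgrade it to an honest algebraic equality $\cD(\phi(w_1\ast w_2)x)=0$ inside $\cH^{\geq 2}$. This is where the sufficient rigidity of the span of the $\zeta^{\dia}$-values is indispensable. Property (B), by contrast, should be considerably more tractable: induction on the length of $v$ reduces it to the single-letter case $v=e_k$ with $k\geq 2$, where harmonic multiplication on the right only introduces letters of $\cH^{\geq 2}$-type and does not interact with the ``ones''-positions on which the recursion defining $\cD$ operates.
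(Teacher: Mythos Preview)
Your overall strategy is the paper's own: it proves $\mathsf{LinKaw}^*\subset\Ker(Z^{\dia})$ via \cref{thm:dia-Kawashima} and \cref{prop:dia-harmonic}, and separately $\Ker(Z^{\dia})=\mathsf{Drop1}$ (\cref{thm:dia-Drop1}). Your reformulation through $\Ker(\cD)$ is equivalent, since $Z^{\dia}=Z^{\rH}|_{\cH^{\geq 2}}\circ\cD$ with $Z^{\rH}|_{\cH^{\geq 2}}$ injective (\cref{lem:lifting}), so $\Ker(\cD)=\Ker(Z^{\dia})$; and your properties (A) and (B) are exactly what one gets by lifting the Kawashima vanishing and the restricted harmonic product through this injectivity.

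Two points in the execution are off. First, your direct combinatorial check that $\cD|_{\cH^{\geq 2}}=\mathrm{id}$ does not go through as stated: an admissible word with all entries $\geq 2$ corresponds to $\bc=(1,k_1-1,1,k_2-1,\dots)$, so $c_{2i}=k_i-1$ equals $1$ whenever $k_i=2$, and then nontrivial even-odd subsets such as $\{2,3\}$ \emph{do} appear in the recursion (try $\bc=(1,1,1,1)$). The identity $\cD|_{\cH^{\geq 2}}=\mathrm{id}$ is true, but the paper obtains it from \eqref{eq:dia-D} together with the injectivity of $Z^{\dia}|_{\cH^{\geq 2}}$---the same ``rigidity'' you correctly invoke later for (A). Second, the relation linking $\cD$ and the $\zeta^{\dia}$-values is $Z^{\dia}\circ\cD=Z^{\dia}$ (equivalently $Z^{\rH}|_{\cH^{\geq 2}}\circ\cD=Z^{\dia}$), not ``$Z\circ\cD=\zeta^{\dia}$''. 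Finally, the direct induction you sketch for (B) is unnecessary and would be harder than simply lifting $Z^{\dia}(u\ast v)=Z^{\dia}(u)Z^{\dia}(v)=Z^{\dia}(\cD(u)\ast v)$ through injectivity, exactly parallel to (A).
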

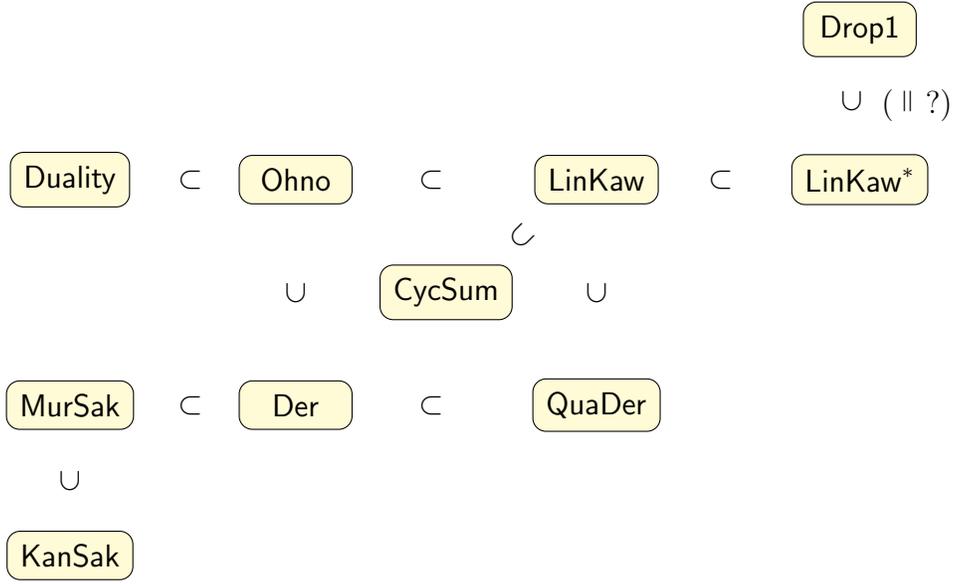
\begin{figure}[htbp]
\centering
\begin{tikzpicture}[font=\large]
  \tikzset{mynode/.style={
    draw, rectangle, rounded corners=5pt,
    inner sep=5pt, minimum width=1.5cm,
    fill=yellow!20
  }}
\node[mynode] (der) at (0,0) {$\mathsf{Der}$};
\node[mynode] (gender) at (4,0) {$\mathsf{QuaDer}$};
\node[mynode] (ohno) at (0,3) {$\mathsf{Ohno}$};
\node[mynode] (linkaw) at (4,3) {$\mathsf{LinKaw}$};
\node[mynode] (cycsum) at (2,1.5) {$\mathsf{CycSum}$};
\node at ($(cycsum)!0.5!(linkaw)$) {\rotatebox{45}{$\subset$}};
\node at ($(der)!0.5!(ohno)$) {\rotatebox{90}{$\subset$}};
\node [xshift=-0.2cm] at ($(der)!0.5!(gender)$) {$\subset$};
\node [xshift=-0.2cm] at ($(ohno)!0.5!(linkaw)$) {$\subset$};
\node at ($(gender)!0.5!(linkaw)$) {\rotatebox{90}{$\subset$}};
\node[mynode] (mursak) at (-3,0) {$\mathsf{MurSak}$};
\node [xshift= 0.1cm] at ($(mursak)!0.5!(der)$) {$\subset$};
\node[mynode] (kansak) at (-3,-2) {$\mathsf{KanSak}$};
\node at ($(mursak)!0.5!(kansak)$) {\rotatebox{90}{$\subset$}};
\node[mynode] (linkawstar) at (7.5,3) {$\mathsf{LinKaw}^*$};
\node [xshift=-0.1cm] at ($(linkaw)!0.5!(linkawstar)$) {$\subset$};
\node[mynode] (drop1) at (7.5,5) {$\mathsf{Drop1}$};
\node[xshift= 0.5cm] at ($(linkawstar)!0.5!(drop1)$) {\rotatebox{90}{$\subset$} \ ( \rotatebox{90}{$=$} ?)};
\node[mynode] (duality) at (-3,3) {$\mathsf{Duality}$};
\node [xshift= 0.1cm] at ($(duality)!0.5!(ohno)$) {$\subset$};
\end{tikzpicture}
  \caption{Relationship between $\mathsf{Drop1}$ and related families of relations}
  \label{fig:relations}
\end{figure}
The family of relations $\mathsf{LinKaw}$ by itself is much smaller than $\mathsf{Drop1}$.
In fact, given that
\begin{equation}\label{eq:dim_Drop1}
\dim_{\QQ}(\cH^0_k/(\mathsf{Drop1}\cap\cH^0_k))=F_{k-1} 
\end{equation}
(explained below), Kawashima (\cite[Corollary~6.5]{Kawashima2009}) showed that 
\[
\dim_{\QQ}(\cH^0_k/(\mathsf{LinKaw}\cap\cH^0_k))=\frac{1}{k-1}\sum_{d\mid k-1}\mu\left(\frac{k-1}{d}\right)2^d\sim\frac{2^{k-1}}{k}
\]
as $k\to\infty$ (\cite[A001037]{OEIS}).
Here, for each $k\geq 2$, $\cH^0_k$ is the subspace of $\cH^0$ generated by the homogeneous elements of total degree $k$, and $\mu$ denotes the M\"obius function.
Therefore, $\mathsf{LinKaw}$ alone does not imply that $\dim_{\QQ}\cZ_k=O(a^k)$ for any $a<2$.

Let $\cH^{\geq 2}_k\coloneqq\cH^{\geq 2}\cap\cH^0_k$.
Since $\cD(w) = w$ holds for any $w \in \cH^{\geq 2}_k$ (this follows from \eqref{eq:dia-D} and \cref{lem:lifting}), we see that $\mathsf{Drop1} \cap \cH^{\geq 2}_k = \{0\}$.
Therefore, the composition $\cH^{\geq 2}_k\hookrightarrow\cH^0_k\twoheadrightarrow\cH^0_k/(\mathsf{Drop1}\cap\cH^0_k)$ is an isomorphism, and thus \eqref{eq:dim_Drop1} holds.
\subsection{Organization}
The present paper is organized as follows.
In \cref{sec:outline}, we introduce the key ingredient, the $\zeta^{\dia}$-values, and explain how the two main theorems are derived from results for $\zeta^{\dia}$-values (namely, \cref{thm:dia-main2} and \cref{thm:dia-Kawashima}).
In \cref{sec:dia}, we prove the fundamental properties of $\zeta^{\dia}$-values, namely the restricted harmonic product formula, an expression of the star value, and a discrete iterated integral expression.
In \cref{sec:difference}, we prove \cref{thm:dia-main2}, which provides our main algorithm.
In \cref{sec:Kawashima}, we prove Kawashima's relations for $\zeta^{\dia}$-values (namely, \cref{thm:dia-Kawashima}).
\subsection*{Acknowledgments}
The third author would like to thank Professor Yoshinori Mishiba for suggesting that one pursue a characteristic zero analogue of Ngo Dac's method in \cite{NgoDac2021} based on the MSW formula~\cite{MaesakaSekiWatanabe-pre}.
The present paper offers a partial answer to this suggestion.
The authors would like to thank Professor Henrik Bachmann for kindly answering my question regarding the content of \cref{rem:qMZV}.
The authors would also like to thank Professor Hidekazu Furusho for kindly informing us about Ecalle’s work.
The authors would like to thank Hanamichi Kawamura, Wataru Machida, and Professor Wadim Zudilin for their comments on the paper.

\section{\texorpdfstring{An outline of the proof and $\zeta^{\dia}$}{An outline of the proof and dia-values}}\label{sec:outline}
For a positive integer $N$ and an index $\bk=(k_1,\dots, k_r)$, the \emph{multiple harmonic sum} $\zeta^{}_N(\bk)$ is defined as
\[
\zeta^{}_N(\bk)\coloneqq \sum_{0<n_1<\cdots<n_r<N}\frac{1}{n_1^{k_1}\cdots n_r^{k_r}}.
\]
Let $\zeta^{\rH}(\bk)$ denote $(\zeta^{}_N(\bk))_{N\in\NN}\in\QQ^{\NN}$.
Here, $\NN$ is the set of positive integers.
For each integer $k\geq 2$, let $\cZ_k^{\rH}$ denote the subspace $\spa_{\QQ}\{\zeta^{\rH}(\bk) \mid \bk\in\II^{\mathrm{adm}}_k\}$ of $\QQ^{\NN}$.
In contrast to the case of multiple zeta values, all $\zeta^{\rH}(\bk)$ are linearly independent by the following lemma.
\begin{lemma}\label{lem:lifting}
The $\QQ$-linear mapping $Z^{\rH}\colon\cH^1\to\QQ^{\NN}$, which is defined by $Z^{\rH}(1)=1$ and $Z^{\rH}(e_{k_1}\cdots e_{k_r})=\zeta^{\rH}(\bk)$ for each index $\bk=(k_1,\dots, k_r)$, is injective. 
\end{lemma}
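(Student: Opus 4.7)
The plan is to reduce the injectivity of $Z^{\rH}$ to the classical $\QQ$-linear independence of multiple polylogarithms, via a generating-function transform.

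First, for each index $\bk=(k_1,\dots,k_r)$ (including $\bk=\varnothing$), introduce the formal generating series
\[
\mathcal{L}_{\bk}(t) \coloneqq \sum_{N\geq 1} \zeta^{}_N(\bk)\,t^N \in \QQ[[t]].
\]
Swapping the order of summation, together with $\sum_{N>n_r} t^N = t^{n_r+1}/(1-t)$, yields the key identity
\[
\mathcal{L}_{\bk}(t) = \frac{t}{1-t}\,\mathrm{Li}_{\bk}(t),\qquad \mathrm{Li}_{\bk}(t) \coloneqq \sum_{0<n_1<\cdots<n_r} \frac{t^{n_r}}{n_1^{k_1}\cdots n_r^{k_r}} \quad (\mathrm{Li}_{\varnothing}\equiv 1).
\]
Consequently, any hypothetical finite $\QQ$-linear relation $\sum_{\bk} c_{\bk}\,\zeta^{\rH}(\bk)=0$ in $\QQ^{\NN}$ translates, by equating coefficients of $t^N$, to $\sum_{\bk} c_{\bk}\,\mathcal{L}_{\bk}(t)=0$ in $\QQ[[t]]$; dividing out the common factor $t/(1-t)$ leaves $\sum_{\bk} c_{\bk}\,\mathrm{Li}_{\bk}(t)=0$.

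It thus suffices to establish the $\QQ$-linear independence of the $\mathrm{Li}_{\bk}$ in $\QQ[[t]]$. I would prove this by induction on $\wt(\bk)$. The base case $\wt(\bk)=0$ is trivial since $\mathrm{Li}_{\varnothing}=1\neq 0$. For the inductive step, use the two differentiation rules
\[
t\,\partial_t\,\mathrm{Li}_{(k_1,\ldots,k_r)}(t)=\mathrm{Li}_{(k_1,\ldots,k_r-1)}(t) \ \ (k_r\geq 2),\qquad (1-t)\,\partial_t\,\mathrm{Li}_{(k_1,\ldots,k_r)}(t)=\mathrm{Li}_{(k_1,\ldots,k_{r-1})}(t) \ \ (k_r=1).
\]
Differentiate the hypothetical relation, multiply by $t(1-t)$, and split the terms according to whether $k_r\geq 2$ or $k_r=1$ by comparing the resulting polynomial factors $(1-t)$ and $t$; each subsum becomes a polylogarithm relation of strictly smaller weight, to which the inductive hypothesis applies. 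As an alternative closed-form argument, $\mathrm{Li}_{\bk}(t)$ is the iterated integral $\int_0^t \omega_{\epsilon_1}\cdots\omega_{\epsilon_w}$ in $\omega_0=du/u$ and $\omega_1=du/(1-u)$, with $\epsilon_1\cdots\epsilon_w$ the Hoffman word of $\bk$, and Chen's theorem supplies the required independence directly.

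The main obstacle is the weight induction step, specifically the separation of the $k_r\geq 2$ terms from the $k_r=1$ terms in $t(1-t)\,\partial_t f(t)=0$: the two subsums appear with different polynomial pre-factors $(1-t)$ and $t$, so one must argue that their cancellation would force both to vanish individually (most cleanly via a coefficient comparison at $t=0$ or $t=1$). If the bookkeeping turns out to be delicate, the iterated integral route via Chen's theorem provides a clean, if less elementary, alternative.
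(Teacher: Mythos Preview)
The paper does not actually prove this lemma: its entire proof is a citation to \cite[Theorem~3.1]{Yamamoto2013}, which in turn rests on \cite[Corollary~5.6]{Brown2009}. Brown's corollary is precisely a linear-independence statement for one-variable multiple polylogarithms, so your reduction of the injectivity of $Z^{\rH}$ to the $\QQ$-linear independence of the $\mathrm{Li}_{\bk}(t)$ is exactly the content of the references the paper invokes, and your Chen's-theorem alternative is essentially how that independence is established there. In that sense your approach and the paper's coincide; you have simply unpacked the black box.

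That said, your proposed elementary induction has a real gap, and the fix you suggest does not work. After differentiating and clearing denominators you obtain $(1-t)A(t)+tB(t)=0$ with $A,B$ lying in the $\QQ$-span of polylogarithms of strictly smaller weight. The inductive hypothesis is $\QQ$-linear independence, not $\QQ[t]$-linear independence, so it does not by itself split this into $A=0$ and $B=0$. Your proposed ``coefficient comparison at $t=0$ or $t=1$'' fails for concrete reasons: every $\mathrm{Li}_{\bl}$ with $\bl\neq\varnothing$ vanishes at $t=0$, so the equation at $t=0$ is automatically $0=0$; and $t=1$ is a singular point where the non-admissible $\mathrm{Li}_{\bl}$ appearing in $A$ typically diverge, so one cannot simply evaluate there. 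To close the induction one would need to upgrade the hypothesis to $\mathbb{C}(t)$-linear independence (provable via monodromy around $t=1$), but at that point one is essentially reproving Chen's theorem. So the iterated-integral route you offer as a fallback is not merely an alternative but the step that actually carries the argument.
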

\begin{proof}
This is stated in \cite[Theorem~3.1]{Yamamoto2013}, which is proved based on \cite[Corollary~5.6]{Brown2009}.
\end{proof}
In particular, $\{\zeta^{\rH}(\bk) \mid \bk \in \II^{\mathrm{adm}}_k\}$ is a basis of $\cZ_k^{\rH}$ and we have $\dim_{\QQ}\cZ_k^{\rH}=2^{k-2}$.
Since a surjection $\cZ_k^{\rH}\twoheadrightarrow \cZ_k$ given by $\zeta^{\rH}(\bk)\mapsto\zeta(\bk)$ is obtained as the restriction of the mapping that sends a Cauchy sequence to its limit, the dimension estimate $\dim_{\QQ}\cZ_k\leq \dim_{\QQ}\cZ_k^{\rH}=2^{k-2}$ follows.
Of course, this is trivial.
\begin{remark}
In the case of positive characteristic multiple zeta values, there exist nontrivial linear relations among multiple harmonic sums (cf.~\cite[Example~3.18]{Todd2018}).
From this, it follows that attempting to solve \cref{prob:fundamental} using exactly the same approach as \cite{NgoDac2021} does not work.
\end{remark}
Instead of $\zeta^{\rH}(\bk)$, which satisfy no linear relations at all, we introduce $\zeta^{\dia}(\bk)\in\QQ^{\NN}$, which partially satisfy the same relations as multiple zeta values.
We establish a setting where the subspace $\cZ^{\dia}_k$ of $\QQ^{\NN}$ spanned by these values of weight $k$ has dimension $F_{k-1}$, and where there exists a natural surjection from $\cZ^{\dia}_k$ to $\cZ_k$.
\begin{definition}\label{def:dia}
For a positive integer $N$ and an admissible index $\bk=(k_1,\dots, k_r)$, we define $\zeta^{\dia}_N(\bk)$ as
\[
\zeta^{\dia}_{N}(\bk)\coloneqq\sum_{A\subset [r]^1_{\bk}}\sum_{(n_1,\dots,n_r)\in S_{r,N}(A)}\Biggl(\prod_{i\in A}\frac{1}{N-n_i}\Biggr)\Biggl(\prod_{i\in[r]\setminus A}\frac{1}{n_i^{k_i}}\Biggr),
\]
where $[r]^1_{\bk}\coloneqq \{i\in[r] \mid k_i=1\}$ and
\[
S_{r,N}(A) \coloneqq \left\{(n_1,\dots,n_r) \in [N-1]^r \ \middle| \ \begin{array}{cc} 
n_i\leq n_{i+1}  & \text{ if } i\in A, \\ 
n_i < n_{i+1} & \text{ if } i\in [r-1]\setminus A
\end{array} \right\}
.
\]
Let $\zeta^{\dia}(\bk)$ denote $(\zeta^{\dia}_N(\bk))_{N\in\NN}\in\QQ^{\NN}$.
The $\QQ$-linear mapping $Z^{\dia}\colon\cH^0\to\QQ^{\NN}$ is defined by $Z^{\dia}(1)=1$ and $Z^{\dia}(e_{k_1}\cdots e_{k_r})=\zeta^{\dia}(\bk)$ for each admissible index $\bk=(k_1,\dots, k_r)$.
For an integer $k\geq 2$, let $\cZ^{\dia}_k=\spa_{\QQ}\{ \zeta^{\dia}(\bk) \mid \bk\in\II_k^{\mathrm{adm}}\}$ denote the $\QQ$-vector space spanned by all $\zeta^{\dia}$-values of weight $k$.
\end{definition}
If $\bk\in\II^{\geq 2}_k$, then we have $\zeta^{\dia}_N(\bk)=\zeta^{}_N(\bk)$ because $[r]^1_{\bk}=\varnothing$ with $r=\dep(\bk)$.
For any admissible index $\bk$, in 
\[
\zeta^{\dia}_{N}(\bk)=\zeta^{}_N(\bk)+\sum_{\varnothing\neq A\subset [r]^1_{\bk}}\sum_{(n_1,\dots,n_r)\in S_{r,N}(A)}\Biggl(\prod_{i\in A}\frac{1}{N-n_i}\Biggr)\Biggl(\prod_{i\in[r]\setminus A}\frac{1}{n_i^{k_i}}\Biggr),
\]
it follows that 
\[
\lim_{N\to\infty}\sum_{\varnothing\neq A\subset [r]^1_{\bk}}\sum_{(n_1,\dots,n_r)\in S_{r,N}(A)}\Biggl(\prod_{i\in A}\frac{1}{N-n_i}\Biggr)\Biggl(\prod_{i\in[r]\setminus A}\frac{1}{n_i^{k_i}}\Biggr)=0
\]
by the same mechanism as in the proof of \cite[Lemma~2.2 (ii)]{Seki2025}, and thus 
\[
\lim_{N\to\infty}\zeta^{\dia}_N(\bk)=\zeta(\bk)
\]
holds.
Therefore, a natural surjection $\cZ_k^{\dia}\twoheadrightarrow\cZ_k$ given by $\zeta^{\dia}(\bk)\mapsto \zeta(\bk)$ is obtained as the restriction of the mapping that sends a Cauchy sequence to its limit.
Note that the argument involving limits is confined to this; all other arguments in this paper involve only elementary manipulations of finite sums.
In this respect, the methodology of the paper is thoroughly elementary (except for \cref{lem:lifting}, which is treated as a black box).

Unlike the case of multiple harmonic sums, $\zeta^{\dia}$-values can satisfy the same relations as multiple zeta values.
For example, Euler's relation
\[
\zeta^{\dia}(1,2)=\zeta^{\dia}(3)
\]
holds.
In fact,
\[
\sum_{0<n_1<n_2<N}\frac{1}{n_1n_2^2}+\sum_{0<n_1\leq n_2<N}\frac{1}{(N-n_1)n_2^2}=\sum_{n=1}^{N-1}\frac{1}{n^3},
\]
which is nothing but \cite[Equation~(3.1)]{BorweinBradley2006} (see also \cite[Section~2]{MaesakaSekiWatanabe-pre}).
Any relation satisfied by $\zeta^{\dia}$-values must also be satisfied by multiple zeta values when taking the limit $N\to\infty$, but the converse does not generally hold.

Since elements of $\{\zeta^{\dia}(\bk) \mid \bk\in\II^{\geq 2}_k\}=\{\zeta^{\rH}(\bk) \mid \bk\in\II^{\geq 2}_k\}$ are linearly independent by \cref{lem:lifting}, unlike the case of multiple zeta values, we can say that $\dim_{\QQ}\cZ_k^{\dia}\geq F_{k-1}$.
We determine this dimension as follows.
\begin{theorem}
Let $k\geq 2$ be an integer.
The set $\{\zeta^{\dia}(\bk) \mid \bk\in\II^{\geq 2}_k\}$ is a basis of $\cZ_k^{\dia}$.
In particular, we have
\[
\dim_{\QQ}\cZ_k^{\dia}=F_{k-1}.
\]
\end{theorem}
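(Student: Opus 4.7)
The plan is to split the theorem into two statements: (i) linear independence of the family $\{\zeta^{\dia}(\bk) \mid \bk \in \II^{\geq 2}_k\}$ in $\QQ^{\NN}$, and (ii) the spanning property $\cZ^{\dia}_k = \spa_{\QQ}\{\zeta^{\dia}(\bk) \mid \bk \in \II^{\geq 2}_k\}$. The dimension count then follows from the standard combinatorial fact that $\#\II^{\geq 2}_k = F_{k-1}$, proved by classifying $(k_1,\dots,k_r) \in \II^{\geq 2}_k$ according to whether $k_1 = 2$ or $k_1 \geq 3$ to obtain the Fibonacci recursion.

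For linear independence, I would observe that when $\bk = (k_1,\dots,k_r) \in \II^{\geq 2}_k$, the index set $[r]^1_{\bk}$ is empty, so the outer sum in Definition \ref{def:dia} reduces to its $A = \varnothing$ term, giving $\zeta^{\dia}(\bk) = \zeta^{\rH}(\bk)$. Hence the family in question is a subset of $\{\zeta^{\rH}(\bl) \mid \bl \in \II^{\mathrm{adm}}_k\}$, which is linearly independent by Lemma \ref{lem:lifting}. This part requires no further work.

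For the spanning property, it suffices to express each generator $\zeta^{\dia}(\bk)$ with $\bk \in \II^{\mathrm{adm}}_k$ as a $\QQ$-linear combination of values indexed by $\II^{\geq 2}_k$. This is exactly the content of the $\zeta^{\dia}$-analogue of Theorem \ref{thm:the-algorithm}, namely Theorem \ref{thm:dia-main2} (to be proved in Section \ref{sec:difference}), which will assert an identity of the form
\[
\zeta^{\dia}(\{1\}^{c_1-1}, c_2+1, \dots, \{1\}^{c_{2s-1}-1}, c_{2s}+1) = Z^{\dia}(\fD(c_1,\dots,c_{2s})).
\]
Since $\fD(c_1,\dots,c_{2s})$ lies in $\cH^{\geq 2}_k$ by construction and is a $\ZZ$-linear combination of the admissible monomials $e_{l_1}\cdots e_{l_t}$ with all $l_j \geq 2$, applying $Z^{\dia}$ yields the desired expansion in $\{\zeta^{\dia}(\bl) \mid \bl \in \II^{\geq 2}_k\}$. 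Every admissible index can be put in the form $(\{1\}^{c_1-1}, c_2+1, \dots, \{1\}^{c_{2s-1}-1}, c_{2s}+1)$ by grouping consecutive $1$'s with the following entry, so this covers all cases.

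The main obstacle is therefore not in this section at all, but in the proof of Theorem \ref{thm:dia-main2} itself. That proof will proceed by induction matching the recursive definition of $\fD$ and will rest on the fundamental properties of $\zeta^{\dia}$-values developed in Section \ref{sec:dia}, in particular the restricted harmonic product formula and the discrete iterated integral representation, which encode precisely the combinatorial data governing the even-odd/odd-even subsets $A$ and the subsets $B$ appearing in the formula for $\fD(\bc)$. The present theorem is essentially a corollary of that forthcoming algorithmic result together with Lemma \ref{lem:lifting}.
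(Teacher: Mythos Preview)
Your proposal is correct and follows essentially the same route as the paper: linear independence via the identification $\zeta^{\dia}(\bk)=\zeta^{\rH}(\bk)$ for $\bk\in\II^{\geq 2}_k$ together with \cref{lem:lifting}, and spanning as an immediate consequence of \cref{thm:dia-main2}. The paper states this theorem with the single line ``This follows from the following stronger theorem,'' the independence having already been noted a few lines earlier; your write-up simply unpacks these two ingredients explicitly. One small inaccuracy in your closing paragraph: the proof of \cref{thm:dia-main2} in \cref{sec:difference} relies on the discrete iterated integral expression (\cref{cor:dia-discrete_integral}) and a difference calculation, not on the restricted harmonic product formula, but this does not affect the argument for the present theorem.
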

This follows from the following stronger theorem.
\begin{theorem}\label{thm:dia-main2}
Let $k\geq 2$ be an integer.
For every $\bk \in \II^{\mathrm{adm}}_k$ and $\bl\in\II^{\geq 2}_k$, there exists a unique integer $c_{\bk;\bl}$ such that for any $\bk \in \II^{\mathrm{adm}}_k$,
\[
\zeta^{\dia}(\bk)=\sum_{\bl\in\II^{\geq 2}_k}c_{\bk;\bl}\zeta^{\dia}(\bl)
\]
holds.
The integers $c_{\bk;\bl}$ are given by the formula
\begin{equation}\label{eq:dia-D}
\zeta^{\dia}(\{1\}^{c_1-1},c_2+1,\dots, \{1\}^{c_{2s-1}-1},c_{2s}+1)=Z^{\dia}(\fD(c_1,\dots, c_{2s}))
\end{equation}
for positive integers $s$ and $c_1,\dots, c_{2s}$.
\end{theorem}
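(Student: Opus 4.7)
The plan is to decompose the theorem into three assertions — uniqueness of the coefficients $c_{\bk;\bl}$, their integrality, and the explicit recursive identity \eqref{eq:dia-D} — and address them in this order. Uniqueness is immediate from Lemma \ref{lem:lifting}: since $[r]^1_{\bl}=\varnothing$ for every $\bl\in\II^{\geq 2}_k$, we have $\zeta^{\dia}(\bl)=\zeta^{\rH}(\bl)$, and the family $\{\zeta^{\rH}(\bl)\mid \bl\in\II^{\geq 2}_k\}$ is linearly independent in $\QQ^{\NN}$, so any decomposition determines the $c_{\bk;\bl}$ uniquely. Integrality is structural: the recursion defining $\fD$ has only $\pm 1$ coefficients together with integer combinatorial counts, and $\cH^{\geq 2}=\QQ+yx\,\QQ\langle x,yx\rangle$ has a $\QQ$-basis of words $e_{l_1}\cdots e_{l_r}$ with $l_i\geq 2$; hence $Z^{\dia}(\fD(\bc))$ is automatically an integer combination of $\zeta^{\dia}(\bl)$ with $\bl\in\II^{\geq 2}_k$ once \eqref{eq:dia-D} is established. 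Existence of the decomposition then follows from the explicit formula.

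Every admissible index is uniquely of the form $\bk=(\{1\}^{c_1-1},c_2+1,\dots,\{1\}^{c_{2s-1}-1},c_{2s}+1)$ for positive integers $c_1,\dots,c_{2s}$, so it suffices to establish \eqref{eq:dia-D} for all such tuples. The plan is induction on the total number of $1$-entries, namely $\sum_{i=1}^{s}(c_{2i-1}-1)$. The base case (no $1$'s, i.e.\ $c_1=\cdots=c_{2s-1}=1$) is a tautology: both sides equal $\zeta^{\dia}(\bk)=\zeta^{\rH}(\bk)$, since the only term in the recursion for $\fD$ that survives is the trivial one recovering the input word. For the inductive step, I would derive a difference formula for $\zeta^{\dia}$ by applying, at each $1$-position, a partial-fraction identity of the shape
\[
\frac{1}{n^{a}(N-n)}=\frac{1}{N}\cdot\frac{1}{n^{a}}+\frac{1}{n^{a-1}}\cdot\frac{1}{N(N-n)},
\]
simultaneously at several $1$-positions, and interpret the resulting expansion as a sum of $\zeta^{\dia}$-values with strictly fewer $1$-entries. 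The natural framework for this is the discrete iterated-integral representation of $\zeta^{\dia}$ that Section \ref{sec:dia} develops, which reduces the derivation to a word-level manipulation in $\cH$.

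The principal obstacle is the combinatorial bookkeeping required to match the resulting expansion with the three summations in the recursion defining $\fD$. The ``even-odd'' (respectively ``odd-even'') subsets $A\subset[2s]^1_{\bc}$ correspond to collections of $1$-entries that can be jointly absorbed into an adjacent $yx^{c_{2i}}$-block without violating the comparison conditions in $S_{r,N}(A)$; the subsets $B\subset[2s]^{>1}_{\bc}$ with the prohibition $\{2r,2r+1\}\not\subset B$ (or $\{2r-1,2r\}\not\subset B$) correspond to simultaneous decrements of non-$1$ exponents that do not collide inside a single block. The three suffix patterns $x^{\#A+\#B}$, $yx^{\#A+\#B-1}$, $yx^{\#A+\#B-1}$ with the stated sign conventions then encode, respectively, the three possible ways the residual $y$-letters produced by the partial-fraction expansion attach to the remaining word. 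Establishing this correspondence term by term — so that every sign, every adjacency constraint, and every suffix is accounted for without spurious cancellation — is the delicate combinatorial heart of the proof.
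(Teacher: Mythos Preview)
Your treatment of uniqueness and integrality is fine and matches the paper. The gap is in the proof of \eqref{eq:dia-D}, where both your base case and your inductive step fail to go through as written.

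The base case is not a tautology. When $c_1=\cdots=c_{2s-1}=1$ the recursion for $\fD$ still produces many terms: for instance, with $\bc=(1,1,1,1)$ one has $[4]^1_{\bc}=\{1,2,3,4\}$ and $[4]^{>1}_{\bc}=\varnothing$, and the three sums contribute respectively $-yx^3$, $-yxyx$, and $yxyx+yxyx+yx^3$; only after cancellation does one recover $e_2e_2$. In the paper the identity $\cD(w)=w$ for $w\in\cH^{\geq 2}$ is deduced \emph{from} \eqref{eq:dia-D} together with Lemma~\ref{lem:lifting} (see the paragraph after \eqref{eq:dim_Drop1}), not verified directly from the recursion.

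More seriously, the inductive step you sketch is not the mechanism that works. The identity $\tfrac{1}{n^a(N-n)}=\tfrac{1}{Nn^a}+\tfrac{1}{Nn^{a-1}(N-n)}$ involves a single variable, whereas in $\zeta^{\dia}_N(\bk)$ the factor $1/(N-n_i)$ at a $1$-position and the adjacent $1/n_{i+1}^{k_{i+1}}$ sit at different summation variables, so this partial fraction does not apply. The paper instead takes the \emph{difference in $N$}: after rewriting $f_N(\bc)\coloneqq\zeta^{\dia}_N(\{1\}^{c_1-1},c_2+1,\dots)$ and $g_N(\bc)\coloneqq f_{N+1}(\bc)$ via Corollary~\ref{cor:dia-discrete_integral}, it introduces an auxiliary sum $h_N(\bc)$ and proves (Lemmas~\ref{lem:f-h}, \ref{lem:g-h}) that both $f_N$ and $g_N$ are expressible through $h_N$ with parallel combinatorics. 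Subtracting yields a closed formula for $(\Delta f)_N(\bc)=f_{N+1}(\bc)-f_N(\bc)$ in terms of $f_N$ and $(\Delta f)_N$ at tuples of strictly smaller \emph{weight} (Theorem~\ref{thm:difference-eq}); summing over $N$ and using $\sum_{n<N}n^{-a}\zeta_n(\bl)=\zeta_N(\bl,a)$ and $\sum_{n<N}n^{-a}(\zeta_{n+1}-\zeta_n)(\bl)=\zeta_N(\bl_{\uparrow}+a)$ reproduces exactly the $\fD$-recursion. Note that the induction is on $\wt(\bc)$, not on the number of $1$'s: the terms $\fD(\bc_{(-A)}-\bdelta_B)$ in the recursion always have smaller weight, but need not have fewer $1$'s in the associated index, so your proposed induction parameter does not obviously decrease.
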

The uniqueness of $c_{\bk;\bl}$ follows from \cref{lem:lifting}, and the remaining part is proved in \cref{sec:difference}.
Taking the limit as $N\to\infty$ in \cref{thm:dia-main2} yields \cref{thm:main1} and \cref{thm:the-algorithm}.
The formula \eqref{eq:dia-D} indicates that the equality holds between finite sums for each $N$, and its proof is achieved by naturally performing difference calculations with respect to $N$. 
The ability to perform such difference calculations is one of the advantages of approaching the problem through finite sums.
In the difference calculations, the $\dia$-version (\cref{cor:dia-discrete_integral}) of the discrete iterated integral expression obtained in \cite{MaesakaSekiWatanabe-pre} plays a key role.
\begin{table}[h]
\centering
\small
\begin{tabular}{|>{\centering\arraybackslash}m{2.8cm}|>{\centering\arraybackslash}m{3.7cm}|>{\centering\arraybackslash}m{3.7cm}|>{\centering\arraybackslash}m{3.7cm}|}
\hline
\textbf{Space} & \( \mathcal{Z}_k^{\rH} \) & \( \mathcal{Z}_k^{\mathrm{\dia}} \) & \( \mathcal{Z}_k \) \\ \hline
\textbf{Dimension} & \( 2^{k-2} \) & \( F_{k-1} \) & \( d_k \) \\ \hline
\textbf{Order} & \( 2^k \) & \( 1.6180^k \) & \( 1.3247^k \) \\ \hline
\textbf{Basis} & \( \{\zeta^{\rH}(\bk) \mid \bk \in \mathbb{I}^{\mathrm{adm}}_k\} \) & \( \{\zeta^{\dia}(\bk) \mid \bk \in \mathbb{I}^{\geq 2}_k\} \) & \( \{\zeta(\bk) \mid \bk \in \mathbb{I}^{\rH}_k\} \) \\ \hline
\textbf{Status} & proved (\cref{lem:lifting}) & proved (this paper) & conjecture \\ \hline
\addlinespace
\end{tabular}
\caption{Summary of spaces and their properties}
\end{table}

Furthermore, we can determine the kernel of $Z^{\dia}$ from \cref{thm:dia-main2}.
\begin{theorem}\label{thm:dia-Drop1}
The family of relations obtained by \eqref{eq:dia-D} exhaust all the $\QQ$-linear relations among $\zeta^{\dia}$-values, that is, 
\[
\mathrm{Ker}(Z^{\dia})=\mathsf{Drop1}.
\]
\end{theorem}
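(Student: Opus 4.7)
The plan is to derive this equality from \cref{thm:dia-main2} together with the injectivity in \cref{lem:lifting}, without any further calculation. The forward inclusion is essentially a restatement of \cref{thm:dia-main2}, and the reverse inclusion reduces to observing that $Z^{\dia}$ is injective when restricted to the subspace $\cH^{\geq 2}$ into which $\cD$ lands.

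For the inclusion $\mathsf{Drop1}\subset\mathrm{Ker}(Z^{\dia})$, I would first reinterpret \cref{thm:dia-main2} together with \eqref{eq:dia-D} as the identity $Z^{\dia}(w)=Z^{\dia}(\cD(w))$ for every word $w\in y\cH x$ that encodes an admissible index, and then extend it to all of $y\cH x$ by the $\QQ$-linearity of $Z^{\dia}$ and $\cD$. Hence $w-\cD(w)\in\mathrm{Ker}(Z^{\dia})$ for every $w\in y\cH x$, and since these elements span $\mathsf{Drop1}$ by definition, the inclusion follows.

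For the reverse inclusion $\mathrm{Ker}(Z^{\dia})\subset\mathsf{Drop1}$, I would take an arbitrary $w\in\mathrm{Ker}(Z^{\dia})\subset\cH^0$ and use the decomposition
\[
w=(w-\cD(w))+\cD(w).
\]
The first summand lies in $\mathsf{Drop1}$ by the previous paragraph (with the convention $\cD(1)=1$ handling the constant term), hence also in $\mathrm{Ker}(Z^{\dia})$. Consequently $\cD(w)$ lies in $\cH^{\geq 2}\cap\mathrm{Ker}(Z^{\dia})$. The decisive point is then that for any $\bl\in\II^{\geq 2}$ we have $[r]^{1}_{\bl}=\varnothing$, so $\zeta^{\dia}(\bl)=\zeta^{\rH}(\bl)$; therefore the restriction $Z^{\dia}|_{\cH^{\geq 2}}$ coincides with $Z^{\rH}|_{\cH^{\geq 2}}$, which is injective by \cref{lem:lifting}. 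Thus $\cD(w)=0$, and $w=w-\cD(w)\in\mathsf{Drop1}$.

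Neither step presents any real obstacle, because the heavy lifting has already been done in proving \cref{thm:dia-main2}: once every $\zeta^{\dia}(\bk)$ is expressed explicitly as a $\ZZ$-linear combination of the linearly independent family $\{\zeta^{\dia}(\bl)\mid\bl\in\II^{\geq 2}_k\}$, every linear relation among $\zeta^{\dia}$-values must arise by collecting like terms in this rewriting, which is precisely the content of $\mathsf{Drop1}$. The only thing one has to verify carefully is that $\cD$ extends consistently to constants and that the convention $\cD(1)=1$ is respected throughout, both of which are built into the recurrence.
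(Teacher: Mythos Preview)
Your proof is correct and follows essentially the same approach as the paper's: the paper phrases the reverse inclusion by passing to the induced map $\overline{Z^{\dia}}\colon \cH^0/\mathsf{Drop1}\to\QQ^{\NN}$ and observing that every coset is represented by an element of $\cH^{\geq 2}$, on which $Z^{\dia}=Z^{\rH}$ is injective by \cref{lem:lifting}, whereas you unwind this quotient argument via the explicit decomposition $w=(w-\cD(w))+\cD(w)$. The two arguments are equivalent, and your handling of the constant term via $\cD(1)=1$ is exactly right.
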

\begin{proof}
Since $\mathsf{Drop1}\subset\mathrm{Ker}(Z^{\dia})$ holds by \eqref{eq:dia-D}, the mapping $Z^{\dia}$ induces $\overline{Z^{\dia}}\colon \cH^0/\mathsf{Drop1} \to \QQ^{\NN}$.
Since $\cH^0/\mathsf{Drop1}=\{w\bmod{\mathsf{Drop1}}\mid w\in \cH^{\geq 2}\}$ and $Z^{\dia}$ is injective on $\cH^{\geq 2}$ by \cref{lem:lifting}, we see that $\overline{Z^{\dia}}$ is an injection.
Therefore, we have $\mathrm{Ker}(Z^{\dia})=\mathsf{Drop1}$.
\end{proof}
Although $\mathsf{Drop1}$ is a new family of relations, we prove that the following relations, already known for multiple zeta values, are also satisfied by $\zeta^{\dia}$-values.
We define a $\QQ$-bilinear product $\circledast$ on $y\cH$ by $w_1e_{k_1}\circledast w_2e_{k_2}=(w_1*w_2)e_{k_1+k_2}$ for any words $w_1$, $w_2\in\cH^1$ and any positive integers $k_1$, $k_2$.
\begin{theorem}\label{thm:dia-Kawashima}
For a positive integer $m$ and $w_1$, $w_2\in y\cH$, we have
\[
\sum_{\substack{a, b\in \NN \text{ s.t.} \\ a+b=m}}Z^{\dia}(\phi(w_1)\circledast y^a)Z^{\dia}(\phi(w_2)\circledast y^b)=Z^{\dia}(\phi(w_1*w_2)\circledast y^m).
\]
\end{theorem}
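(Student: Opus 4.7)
The plan is to establish the identity levelwise in $N$: for each $N\geq 1$, I would prove
\[
\sum_{a+b=m}Z^{\dia}_N(\phi(w_1)\circledast y^a)\,Z^{\dia}_N(\phi(w_2)\circledast y^b)=Z^{\dia}_N(\phi(w_1*w_2)\circledast y^m)
\]
as an equality in $\QQ$, so that the theorem follows by assembling these identities over all $N$. Working at finite level is natural here because both sides are finite rational sums and no analytic input is required.

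The first step is to package the quantities $Z^{\dia}_N(\phi(w)\circledast y^m)$ for fixed $w$ and varying $m$ into a single generating series
\[
L^{\dia}_w(N;t) \coloneqq \sum_{m\geq 1}Z^{\dia}_N(\phi(w)\circledast y^m)\,t^m\in\QQ[[t]],
\]
and to reduce the theorem to the cleaner multiplicative identity $L^{\dia}_{w_1}(N;t)\cdot L^{\dia}_{w_2}(N;t)=L^{\dia}_{w_1*w_2}(N;t)$. Using the discrete iterated integral representation of $\zeta^{\dia}$ from \cref{cor:dia-discrete_integral}, I would derive a closed Newton-type form for $L^{\dia}_w(N;t)$ in which $\phi$ acts as the substitution $x\mapsto x+y$, $y\mapsto -y$ on the integrand, while the tail $\sum_{m\geq 1} t^m (\circledast y^m)$ sums geometrically to produce a denominator linear in $t$ at the outermost summation variable.

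With such a closed form in hand, the product $L^{\dia}_{w_1}(N;t)\cdot L^{\dia}_{w_2}(N;t)$ becomes a product of two finite sums of the same shape, which I would expand using the restricted harmonic product formula for $\zeta^{\dia}$-values (\cref{prop:dia-harmonic}). The hypothesis $w_3\in\cH^{\geq 2}$ required there is automatic in our setting, because $\phi(w)\circledast y^m$ lies in $\cH^{\geq 2}$ for $m\geq 2$. The stuffle decomposition produced by the restricted harmonic product on the index level should then match, term by term, the stuffle $w_1*w_2$ appearing inside $\phi$ on the right-hand side.

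The main obstacle is the interaction between $\phi$ and the stuffle product: $\phi$ is an algebra automorphism for concatenation but not for $*$, so one has to rewrite $\phi(w_1*w_2)$ in terms of $\phi(w_1)$ and $\phi(w_2)$ by carefully combining the restricted harmonic product with the action of $\phi$ on individual letters. A secondary delicate point is that $\zeta^{\dia}_N(\bk)$ differs from the ordinary multiple harmonic sum by boundary contributions involving factors $1/(N-n_i)$; verifying that these contributions participate symmetrically on both sides of the identity, rather than obstructing it, is precisely what forces the specific definition of $\zeta^{\dia}$ together with the admissibility restriction in the harmonic product formula, and I expect this cancellation to be where the proof requires the most care.
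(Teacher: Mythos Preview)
Your high-level shape --- pass to a generating series in $t$, prove a multiplicative identity, and read off coefficients --- is exactly what the paper does. But the concrete mechanism you propose breaks at the step where you invoke the restricted harmonic product.

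First, the claim ``$\phi(w)\circledast y^m$ lies in $\cH^{\geq 2}$ for $m\geq 2$'' is false. Take $w=y^2$; then $\phi(w)=y^2$ and $\phi(w)\circledast y^2=(e_1*e_1)e_2=2e_1e_1e_2+e_2e_2$, which has $e_1e_1e_2\notin\cH^{\geq 2}$. In general $\phi(w)\circledast y^m$ is only admissible, so \cref{prop:dia-harmonic} (which needs one factor in $\cH^{\geq 2}$) does not apply to the product $Z^{\dia}_N(\phi(w_1)\circledast y^a)\cdot Z^{\dia}_N(\phi(w_2)\circledast y^b)$. Since $\zeta^{\dia}$-values do \emph{not} satisfy the unrestricted harmonic product (the paper exhibits an explicit failure after \cref{prop:dia-harmonic}), this is not a technicality you can patch; your proposed route to multiplicativity does not go through. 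The ``main obstacle'' you flag --- that $\phi$ does not intertwine $*$ --- is real and is not resolved by the restricted harmonic product either.

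The paper's argument avoids \cref{prop:dia-harmonic} entirely. It introduces a function $F_N(\bk;t)$ (a signed, $t$-shifted variant of the Kawashima function) which satisfies multiplicativity for the \emph{star}-harmonic product $\overline{*}$ by a direct combinatorial computation; separately it defines $G_N(\bk;t)$ whose Taylor coefficients are the $Z^{\dia}_N(S(w)\circledast y^m)$. The crux is the identity $F_N(\bk;t)=G_N(\bk^{\vee};t)$, and this is proved by the connected sum method of Seki--Yamamoto: one builds explicit connectors $C_{N,t}$, verifies transport relations that move indices across the connector one letter at a time, and checks boundary conditions. The passage from $\overline{*}$-multiplicativity of $F_N$ to the stated relation then uses the standard algebraic dictionary (Hoffman dual $\vee$, the operator $S$, and the rewriting $\phi(w_1*w_2)$ as in Kawashima's original paper). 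None of this machinery appears in your plan, and it is the substantive content of the proof.
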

Replacing $Z^{\dia}$ with $Z$ yields Kawashima's relations for multiple zeta values, and it is conjectured that the family of linear relations obtained by expanding the product on the left-hand side using the shuffle product formula provides all relations among multiple zeta values (\cite{Kawashima2009}).
In the case of $Z^{\dia}$, however, the shuffle product formula does not hold.
This theorem will be proved in \cref{sec:Kawashima} by using the connected sum method due to Seki--Yamamoto~\cite{SekiYamamoto2019}.
\begin{corollary}\label{cor:dia-linear-Kawashima}
$\mathsf{LinKaw}^*\subset\mathrm{Ker}(Z^{\dia})$.
\end{corollary}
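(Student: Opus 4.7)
My plan is to deduce \cref{cor:dia-linear-Kawashima} from the two main tools already established in the paper: the restricted harmonic product formula for $\zeta^{\dia}$-values (\cref{prop:dia-harmonic}), used to peel off the factor $w_3$, and \cref{thm:dia-Kawashima} in its simplest case $m = 1$, which already encodes the linear Kawashima relation at the $\zeta^{\dia}$-level.

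First I would fix a generator $u = (\phi(w_1 * w_2) x) * w_3$ of $\mathsf{LinKaw}^*$ with $w_1, w_2 \in y\cH$ and $w_3 \in \cH^{\geq 2}$. Since $w_1 * w_2 \in y\cH$ and $\phi(y\cH) \subseteq y\cH$ (as $\phi(y) = -y$), the factor $\phi(w_1 * w_2) x$ lies in $y\cH x \subseteq \cH^0$. By \cref{prop:dia-harmonic}, since $w_3 \in \cH^{\geq 2}$, I obtain
\[
Z^{\dia}(u) = Z^{\dia}(\phi(w_1 * w_2) x) \cdot Z^{\dia}(w_3),
\]
reducing the task to showing $Z^{\dia}(\phi(w_1 * w_2) x) = 0$.

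Next I would record the simple identity $W x = W \circledast y$ for every $W \in y\cH$. On a basis word $w \cdot e_k$, the defining relation of $\circledast$ gives $(w \cdot e_k) \circledast e_1 = (w * 1) \cdot e_{k+1} = (w \cdot e_k) x$, and $\QQ$-linearity extends the identity to all of $y\cH$. In particular, $\phi(w_1 * w_2) x = \phi(w_1 * w_2) \circledast y$.

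Finally I would specialize \cref{thm:dia-Kawashima} to $m = 1$. The sum on the left-hand side is indexed by $a, b \in \NN$ with $a + b = 1$ and is therefore empty, so the identity collapses to $0 = Z^{\dia}(\phi(w_1 * w_2) \circledast y) = Z^{\dia}(\phi(w_1 * w_2) x)$. Combined with the factorization above, $Z^{\dia}(u) = 0$, and $\QQ$-linearity of the generating set yields $\mathsf{LinKaw}^* \subseteq \mathrm{Ker}(Z^{\dia})$. There is no real obstacle internal to this corollary; all the substance sits inside \cref{thm:dia-Kawashima} and \cref{prop:dia-harmonic}. The only point requiring a little care is the bookkeeping identification $W x = W \circledast y$, which bridges the generator form of $\mathsf{LinKaw}^*$ with the form $\phi(w_1 * w_2) \circledast y^m$ appearing in \cref{thm:dia-Kawashima}.
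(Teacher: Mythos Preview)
Your proof is correct and follows the same route as the paper's own argument: apply the $m=1$ case of \cref{thm:dia-Kawashima} (where the left-hand sum is empty since $a,b\in\NN$) to obtain $\mathsf{LinKaw}\subset\mathrm{Ker}(Z^{\dia})$, then extend to $\mathsf{LinKaw}^*$ via the restricted harmonic product formula (\cref{prop:dia-harmonic}). You simply make explicit a few details the paper leaves tacit, notably the identity $Wx=W\circledast y$ for $W\in y\cH$ and the verification that $\phi(w_1*w_2)x\in\cH^0$.
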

\begin{proof}
The case $m=1$ of \cref{thm:dia-Kawashima} shows $\mathsf{LinKaw}\subset\mathrm{Ker}(Z^{\dia})$, and it suffices to combine this with \cref{prop:dia-harmonic}, which will be proved later.
\end{proof}
\cref{thm:main2} follows from \cref{thm:dia-Drop1} and \cref{cor:dia-linear-Kawashima}.
\cref{thm:main2} is an inclusion relation between two families of abstract relations formulated on the Hoffman algebra, and it is not a claim concerning concrete realizations such as multiple zeta values.
Several studies have investigated inclusion relationships among various families of abstract relations.
In addition to the studies in \cref{fig:relations}, there are, for instance, \cite{Furusho2011}, \cite{Furusho2022}, \cite[Theorem~28 and Theorem~30]{HiroseSato2019}, and \cite[Theorem~4.6]{KanekoYamamoto2018}.
Furthermore, some unresolved problems remain.
For example, it is still unknown whether the duality relations are contained in the extended double shuffle relations (cf.~\cite{Kimura2024}).
Proving that concrete objects like multiple zeta values satisfy two families of abstract relations does not, by itself, imply an inclusion relation between these families.
However, if one of the two families coincides with the kernel of a mapping that realizes specific concrete objects (in this case, $Z^{\dia}$), then by proving that these specific objects satisfy the other family, an inclusion relation on the Hoffman algebra is obtained, as illustrated by the proof of \cref{thm:main2}.
In our work, the equality between the kernel and $\mathsf{Drop1}$ is proved thanks to \cref{lem:lifting}, and this ability to establish such an argument represents another advantage of the finite sum approach. 

\section{\texorpdfstring{Basic properties of $\zeta^{\dia}$}{Basic properties of dia-values}}\label{sec:dia}
For a non-negative integer $n$, $[n]_0\coloneqq [n]\cup\{0\}$.
For an index $\bk=(k_1,\dots, k_r)$ and a subset $A\subset [r-1]$, denote by $\bk_{(+A)}$ the index obtained from $(k_1 \, \Box\cdots \Box \, k_r)$ by reading each $k_i \, \Box \, k_{i+1}$ as $k_i+k_{i+1}$ when $i\in A$ and as $k_i, k_{i+1}$ (using a comma between $k_i$ and $k_{i+1}$) when $i\notin A$.
See \cref{subsec:1st-main} for the notation $\bk_{(-A)}$.
\subsection{Harmonic product formula}
The following proposition is clear from the harmonic product formula for multiple harmonic sums when both $w_1$ and $w_2$ are elements of $\cH^{\geq 2}$.
However, it still holds even when one of them is an element of $\cH^0$.
\begin{proposition}\label{prop:dia-harmonic}
For any $w_1\in\cH^0$ and $w_2\in\cH^{\geq 2}$, we have
\[
Z^{\dia}(w_1\ast w_2)=Z^{\dia}(w_1)Z^{\dia}(w_2).
\]
\end{proposition}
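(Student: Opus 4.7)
The plan is to verify $Z^{\dia}(w_1\ast w_2)_N = Z^{\dia}(w_1)_N\cdot Z^{\dia}(w_2)_N$ for each $N\in\NN$ by a combinatorial bijection of summands. By $\QQ$-bilinearity I would reduce to the case $w_1 = e_{k_1}\cdots e_{k_r}$ with $\bk = (k_1,\ldots,k_r)\in\II^{\mathrm{adm}}$ and $w_2 = e_{l_1}\cdots e_{l_s}$ with $\bl = (l_1,\ldots,l_s)\in\II^{\geq 2}$. Since every $l_j\geq 2$, we have $[s]^1_{\bl}=\varnothing$, so $\zeta^{\dia}_N(\bl) = \zeta_N(\bl) = \sum_{0<m_1<\cdots<m_s<N}\prod_j m_j^{-l_j}$. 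The LHS then expands into a triple sum indexed by $A\subset[r]^1_{\bk}$, tuples $(n_1,\ldots,n_r)\in S_{r,N}(A)$, and strictly increasing $(m_1,\ldots,m_s)\in[N-1]^s$.

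Next I would classify each summand by the \emph{combined order type} of $(n_1,\ldots,n_r)$ (with weak inequalities at positions in $A$, strict elsewhere) together with $(m_1,\ldots,m_s)$ (strict). Such an order type encodes a stuffle pattern $\sigma$ interleaving $\bk$ with $\bl$, producing a new index $\bk\ast_\sigma\bl$ of depth $r'$ with some positions ``merged'' to weight $k_i+l_j$, together with a subset $A'\subset[r']^1_{\bk\ast_\sigma\bl}$ on which weak equality occurs. The key subtlety is that a coincidence $n_i = m_j$ plays two different roles depending on $i$: if $i\notin A$, it is a genuine stuffle merge contributing a factor $n_i^{-(k_i+l_j)}$ to $\zeta_N$ of the merged index; if $i\in A$ (so $k_i=1$), the equality is \emph{not} a merge but is absorbed as the ``diamond-equal'' sub-case of the un-merged interleaving, producing exactly the factor $(N-n_i)^{-1}\,n_i^{-l_j}$ that appears in the $A'$-summand of $\zeta^{\dia}_N(\bk\ast_\sigma\bl)$ with $i$ positioned at a $1$-slot in $A'$.

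With this dictionary, fixing $\sigma$ and $A'$ and summing all LHS contributions having that data (over both the strict and equal sub-cases of the weak-inequality variables) reproduces the $A'$-summand
\[
\sum_{(n')\in S_{r',N}(A')}\left(\prod_{i\in A'}\frac{1}{N-n'_i}\right)\left(\prod_{i\notin A'}\frac{1}{(n'_i)^{k'_i}}\right)
\]
of $\zeta^{\dia}_N(\bk\ast_\sigma\bl)$, where $(k'_1,\ldots,k'_{r'}) = \bk\ast_\sigma\bl$. Summing over $A'$ yields $\zeta^{\dia}_N(\bk\ast_\sigma\bl)$, and summing over the stuffle patterns $\sigma$ (with multiplicities encoded in the expansion of $w_1\ast w_2$) yields $Z^{\dia}(w_1\ast w_2)_N$, as required.

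The main obstacle is the case analysis establishing the bijection cleanly, handling (i) consecutive weak-equality coincidences $n_i = n_{i+1}$ for $i\in A$, (ii) multiple simultaneous coincidences $n_i = m_j$, and (iii) the correct attribution of each $n_i = m_j$ with $i\in A$ to the diamond-equal sub-case rather than to a stuffle merge. The hypothesis $\bl\in\cH^{\geq 2}$ is critical here: merging a $1$ from $\bl$ with a $1$ from $\bk$ would create a weight-$2$ position whose $A$-membership is ambiguous, breaking the dictionary. Once this bookkeeping is resolved and the multiplicities are confirmed to match the recursive stuffle formula $w_1 e_a\ast w_2 e_b = (w_1\ast w_2 e_b)e_a + (w_1 e_a\ast w_2)e_b + (w_1\ast w_2)e_{a+b}$, the identity follows.
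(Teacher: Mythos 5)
Your proposal follows essentially the same route as the paper's proof: there the product of each $A$-summand of $\zeta^{\dia}_N(\bk)$ with $\zeta_N(\bl)$ is expanded over stuffle surjections $\sigma\in\Sigma_{r,s,d}$ that are forbidden to merge at positions in $\sigma(A)$ (equalities $n_i=m_j$ with $i\in A$ being absorbed by the weak inequalities defining $S_{r+s-d,N}(\sigma(A))$, which is exactly your ``diamond-equal'' dictionary), and then one resums over $A$ for fixed $\sigma$ to obtain $\zeta^{\dia}_N(k_1^{\sigma},\dots,k_{r+s-d}^{\sigma})$. Only your side remark on why $w_2\in\cH^{\geq 2}$ is needed is slightly off---the point is that positions coming from $\bl$ must never be $1$-positions of the merged index (so no factors $1/(N-m_j)$ or extra weak inequalities are required), rather than any ambiguity of a weight-$2$ merge---but this does not affect the argument.
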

\begin{proof}
Let $\bk=(k_1,\dots,k_r)$ be an admissible index and let $\bl=(k_{r+1},\cdots, k_{r+s})$ be an index such that $k_i\geq 2$ for $i\in[r+s]\setminus[r]$.
For each $d\in[\min\{r,s\}]_0$, let $\Sigma_{r,s,d}$ denote the set of all surjections $\sigma\colon[r+s]\twoheadrightarrow [r+s-d]$ satisfying $\sigma(1)<\sigma(2)<\cdots<\sigma(r)$ and $\sigma(r+1)<\sigma(r+2)<\cdots<\sigma(r+s)$.
For $\sigma\in\Sigma_{r,s,d}$, we define an index $(k_1^{\sigma},\dots, k_{r+s-d}^{\sigma})$ by
\[
k_j^{\sigma}\coloneqq\begin{cases}k_a & \text{if }\sigma^{-1}(\{j\})=\{a\}, \\ k_a+k_b & \text{if }\sigma^{-1}(\{j\})=\{a,b\} \text{ with } a\neq b. \end{cases}
\]
For each subset $A$ of $[r]^1_{\bk}$, we have
\begin{align*}
&\Biggl(\sum_{(n_1,\dots,n_r)\in S_{r,N}(A)}\Biggl(\prod_{i\in A}\frac{1}{N-n_i}\Biggr)\Biggl(\prod_{i\in[r]\setminus A}\frac{1}{n_i^{k_i}}\Biggr)\Biggr)\zeta^{}_N(\bl)\\
&=\sum_{d=0}^{\min\{r,s\}}\sum_{\substack{\sigma\in\Sigma_{r,s,d} \\ \#\sigma^{-1}(\{j\})=1 \ (j\in\sigma(A))}}\sum_{(m_1,\dots, m_{r+s-d})\in S_{r+s-d,N}(\sigma(A))}\Biggl(\prod_{j\in \sigma(A)}\frac{1}{N-m_j}\Biggr)\\
&\qquad\times\Biggl(\prod_{j\in [r+s-d]\setminus \sigma(A)}\frac{1}{m_j^{k^{\sigma}_j}}\Biggr).
\end{align*}
For each fixed $\sigma$, we have
\begin{align*}
&\sum_{\substack{A\subset[r]^1_{\bk} \\ \#\sigma^{-1}(\{j\})=1 \ (j\in\sigma(A))}}\sum_{(m_1,\dots, m_{r+s-d})\in S_{r+s-d,N}(\sigma(A))}\Biggl(\prod_{j\in \sigma(A)}\frac{1}{N-m_j}\Biggr)\Biggl(\prod_{j\in [r+s-d]\setminus \sigma(A)}\frac{1}{m_j^{k^{\sigma}_j}}\Biggr)\\
&=\zeta^{\dia}_N(k_1^{\sigma},\dots,k_{r+s-d}^{\sigma}).
\end{align*}
Therefore, we see that
\[
\zeta^{\dia}_N(\bk)\zeta_N(\bl)=\sum_{d=0}^{\min\{r,s\}}\sum_{\sigma\in \Sigma_{r,s,d}}\zeta^{\dia}_N(k_1^{\sigma},\dots,k^{\sigma}_{r+s-d})
\]
holds, thereby yielding the harmonic product formula.
\end{proof}
\begin{example}
For integers $k$, $l\geq 2$,
\begin{align*}
\zeta^{\dia}_N(1,k)\zeta^{\dia}_N(l)&=\left(\sum_{0<n_1<n_2<N}\frac{1}{n_1n_2^k}+\sum_{0<n_1\leq n_2<N}\frac{1}{(N-n_1)n_2^k}\right)\left(\sum_{m=1}^{N-1}\frac{1}{m^l}\right)\\
&=\sum_{0<m<n_1<n_2<N}\frac{1}{m^ln_1n_2^k}+\sum_{0<n_1<m<n_2<N}\frac{1}{n_1m^ln_2^k}+\sum_{0<n_1<n_2<m<N}\frac{1}{n_1n_2^km^l}\\
&\quad +\sum_{0<n_1<n_2<N}\frac{1}{n_1^{l+1}n_2^k}+\sum_{0<n_1<n_2<N}\frac{1}{n_1n_2^{k+l}}\\
&\quad +\sum_{0<m<n_1\leq n_2<N}\frac{1}{m^l(N-n_1)n_2^k}+\sum_{0<n_1\leq m<n_2<N}\frac{1}{(N-n_1)m^ln_2^k}\\
&\quad + \sum_{0<n_1\leq n_2<m<N}\frac{1}{(N-n_1)n_2^km^l}+\sum_{0<n_1\leq n_2<N}\frac{1}{(N-n_1)n_2^{k+l}}\\
&=\left(\sum_{0<m<n_1<n_2<N}\frac{1}{m^ln_1n_2^k}+\sum_{0<m<n_1\leq n_2<N}\frac{1}{m^l(N-n_1)n_2^k}\right)\\
&\quad + \left(\sum_{0<n_1<m<n_2<N}\frac{1}{n_1m^ln_2^k}+\sum_{0<n_1\leq m<n_2<N}\frac{1}{(N-n_1)m^ln_2^k}\right)\\
&\quad +\left(\sum_{0<n_1<n_2<m<N}\frac{1}{n_1n_2^km^l}+\sum_{0<n_1\leq n_2<m<N}\frac{1}{(N-n_1)n_2^km^l}\right)\\
&\quad +\sum_{0<n_1<n_2<N}\frac{1}{n_1^{l+1}n_2^k}\\
&\quad +\left(\sum_{0<n_1<n_2<N}\frac{1}{n_1n_2^{k+l}}+\sum_{0<n_1\leq n_2<N}\frac{1}{(N-n_1)n_2^{k+l}}\right)\\
&=\zeta^{\dia}_N(l,1,k)+\zeta^{\dia}_N(1,l,k)+\zeta^{\dia}_N(1,k,l)+\zeta^{\dia}_N(l+1,k)+\zeta^{\dia}_N(1,k+l).
\end{align*}
The following example shows that the $\zeta^{\dia}$-values do not satisfy the harmonic product formula in general.
\begin{align*}
\zeta^{\dia}_N(1,2)^2&=2\zeta^{\dia}_N(1,2,1,2)+4\zeta^{\dia}_N(1,1,2,2)+2\zeta^{\dia}_N(1,1,4)+2\zeta^{\dia}_N(1,3,2)+2\zeta^{\dia}_N(2,2,2)+\zeta^{\dia}_N(2,4)\\
&\quad-2\sum_{0<n_1\leq n_2<n_3<N}\frac{1}{(N-n_1)^2n_2^2n_3^2}-\sum_{0<n_1\leq n_2<N}\frac{1}{(N-n_1)^2n_2^4}.
\end{align*}
Nevertheless, by applying \cref{thm:dia-main2}, the product of two $\zeta^{\dia}$-values can always be expressed as a linear combination of $\zeta^{\dia}$-values.
\end{example}
\subsection{Star value}
Analogously to other objects like multiple zeta values, we define the star version of the $\zeta^{\dia}$-value by
\[
\zeta^{\dia\star}_N(\bk)\coloneqq\sum_{\bl\preceq\bk}\zeta^{\dia}_N(\bl).
\]
This value can also be expressed as follows.
\begin{proposition}\label{prop:star-expression}
For a positive integer $N$ and an admissible index $\bk=(k_1,\dots, k_r)$, we have
\[
\zeta_N^{\dia\star}(\bk)=\sum_{A\subset [r]^1_{\bk}}\sum_{(n_1,\dots,n_r)\in S^{\star}_{r,N}(A)}\Biggl(\prod_{i\in A}\frac{1}{N-n_i}\Biggr)\Biggl(\prod_{i\in[r]\setminus A}\frac{1}{n_i^{k_i}}\Biggr),
\]
where
\[
S^{\star}_{r,N}(A) \coloneqq \left\{(n_1,\dots,n_r) \in [N-1]^r \ \middle| \ \begin{array}{cc} 
n_i\leq n_{i+1}  & \text{ if } i\in A\cap[r-1] \text{ or } i+1\in[r]\setminus A, \\ 
n_i < n_{i+1} & \text{ if } i\in[r-1]\setminus A \text{ and } i+1\in A
\end{array} \right\}
.
\]
\end{proposition}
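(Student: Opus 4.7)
The plan is to expand both sides as sums over tuples $(n_1, \ldots, n_r) \in [N-1]^r$ and match them after swapping the order of summation. Parametrize $\bl \preceq \bk$ by the subset $P \subset [r-1]$ of slots where ``$+$'' is inserted, so $\bl = \bk_{(+P)}$. Let $\pi \colon [r] \twoheadrightarrow [s]$ be the resulting projection, with $s = r - \#P$ and $l_j = \sum_{i \in \pi^{-1}(j)} k_i$. Since $l_j = 1$ forces $\pi^{-1}(j) = \{i\}$ with $k_i = 1$, every $B \subset [s]^1_\bl$ appearing in the defining sum for $\zeta^{\dia}_N(\bl)$ lifts uniquely to $A \coloneqq \pi^{-1}(B) \subset [r]^1_\bk$ whose elements are singleton blocks of $\pi$; equivalently, $A \cap P = A \cap (P+1) = \varnothing$, where $P+1 \coloneqq \{i+1 : i \in P\}$.

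Next, lift the summation variables of $\zeta^{\dia}_N(\bl)$ via $n_i \coloneqq m_{\pi(i)}$. The $n_i$ are constant on each block of $\pi$, so the factor $m_j^{-l_j}$ for $j \notin B$ unpacks as $\prod_{i \in \pi^{-1}(j)} n_i^{-k_i}$, while $(N-m_j)^{-1}$ for $j \in B$ becomes $(N-n_i)^{-1}$ at the unique $i \in A$ with $\pi(i) = j$. Hence the $(P, B)$-term in $\sum_{\bl \preceq \bk} \zeta^{\dia}_N(\bl)$ has summand $\prod_{i \in A}(N-n_i)^{-1}\prod_{i \in [r]\setminus A} n_i^{-k_i}$, summed over tuples $(n_1, \ldots, n_r)$ lifting some element of $S_{s,N}(B)$; the lifted constraints are $n_i = n_{i+1}$ for $i \in P$, $n_i \le n_{i+1}$ for $i \notin P$ with $i \in A$, and $n_i < n_{i+1}$ for $i \notin P$ with $i \notin A$.

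Finally, swap the order of summation and fix $A \subset [r]^1_\bk$. The key observation is that each tuple $(n_1,\ldots,n_r)$ admits at most one compatible $P$: compatibility already forces $i \notin P$ whenever $i \in A$ or $i+1 \in A$, while at every remaining position (where $i \notin A$ and $i+1 \notin A$) the relation between $n_i$ and $n_{i+1}$ decides membership in $P$, namely $i \in P$ if $n_i = n_{i+1}$ and $i \notin P$ if $n_i < n_{i+1}$. Consequently the double sum over pairs $(P, (n_1,\ldots,n_r))$ collapses to a single sum over $(n_1,\ldots,n_r)$ with $n_i \le n_{i+1}$ whenever $i \in A$ or $i+1 \notin A$, and $n_i < n_{i+1}$ whenever $i \notin A$ and $i+1 \in A$, which is exactly $S^{\star}_{r,N}(A)$.

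The hard part will be the combinatorial bookkeeping of the $(P,B) \leftrightarrow A$ correspondence, together with the case analysis verifying that the uniqueness of $P$ given $(n_1,\ldots,n_r)$ produces precisely the two-case description of $S^{\star}_{r,N}(A)$, including at the boundary positions $i=1$ and $i=r-1$; once that bijective matching is in place the identity follows immediately from collecting terms.
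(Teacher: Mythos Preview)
Your proposal is correct and follows essentially the same route as the paper: parametrize $\bl\preceq\bk$ by the contraction set $P\subset[r-1]$ (the paper calls it $B$), lift $B\subset[s]^1_{\bl}$ to $A\subset[r]^1_{\bk}$ with $A\cap(P\cup(P+1))=\varnothing$, swap the order of summation, and observe that for fixed $A$ the inner sum over $(P,\bn)$ collapses to a single sum over $S^{\star}_{r,N}(A)$. Your write-up is in fact more explicit than the paper's, which dispatches the final collapse in one sentence.
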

\begin{proof}
For $B\subset [r-1]$, we denote by $B^+$ the set $\{i+1\in[r] \mid i\in B\}$.
By definition, we compute
\begin{align*}
\zeta^{\dia\star}_N(\bk)&=\sum_{B\subset[r-1]}\zeta^{\dia}_N(\bk_{(+B)})\\
&=\sum_{B\subset [r-1]}\sum_{A\subset[r]^1_{\bk}\setminus(B\cup B^+)}\sum_{\substack{(n_1,\dots,n_r)\in[N-1]^r \\ n_i\leq n_{i+1} \ (i\in A) \\ n_i=n_{i+1} \ (i\in B) \\ n_i<n_{i+1} \ (i\in[r-1]\setminus (A\cup B))}}\Biggl(\prod_{i\in A}\frac{1}{N-n_i}\Biggr)\Biggl(\prod_{i\in[r]\setminus A}\frac{1}{n_i^{k_i}}\Biggr)\\
&=\sum_{A\subset[r]^1_{\bk}}\sum_{\substack{B\subset [r-1] \\ A\cap (B\cup B^+)=\varnothing}}\sum_{\substack{(n_1,\dots,n_r)\in[N-1]^r \\ n_i\leq n_{i+1} \ (i\in A) \\ n_i=n_{i+1} \ (i\in B) \\ n_i<n_{i+1} \ (i\in[r-1]\setminus (A\cup B))}}\Biggl(\prod_{i\in A}\frac{1}{N-n_i}\Biggr)\Biggl(\prod_{i\in[r]\setminus A}\frac{1}{n_i^{k_i}}\Biggr).
\end{align*}
Since the inner two summations are equal to $\sum_{(n_1,\dots,n_r)\in S^{\star}_{r,N}(A)}$, the proof is complete.
\end{proof}
\subsection{Discrete iterated integral expression}
Based on the idea of the discrete iterated integral expression of multiple harmonic sums given in \cite{MaesakaSekiWatanabe-pre}, we provide a discrete iterated integral expression of $\zeta^{\dia}_N$-values.
\begin{definition}
For a positive integer $N$ and an index $\bk=(k_1,\dots,k_r)$, we define $\zeta^{\flat}_N(\bk)$ as
\[
\zeta^{\flat}_N(\bk)\coloneqq \sum_{\substack{0<n_{i,1}\leq \cdots \leq n_{i,k_i}<N \ (i\in [r]) \\ n_{i,k_i}<n_{i+1,1} \ (i\in[r-1])}}\prod_{i\in[r]}\frac{1}{(N-n_{i,1})n_{i,2}\cdots n_{i,k_i}}.
\]
\end{definition}
If $\bk$ is admissible, then $\displaystyle\lim_{N\to\infty}\zeta^{\flat}_N(\bk)$ gives the iterated integral expression of the multiple zeta value $\zeta(\bk)$.
That is, $\displaystyle \lim_{N\to\infty}\zeta^{}_N(\bk)=\lim_{N\to\infty}\zeta^{\flat}_N(\bk)$ holds; in fact, the equality already holds before taking the limit.
\begin{theorem}[Maesaka--Seki--Watanabe~\cite{MaesakaSekiWatanabe-pre}]\label{thm:MSW}
For every positive integer $N$ and every index $\bk$, we have
\[
\zeta^{}_N(\bk)=\zeta^{\flat}_N(\bk).
\]
\end{theorem}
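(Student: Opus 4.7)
The plan is to prove the identity by induction on $N$, with the base case $N=1$ being trivial since both sides are empty sums. For the inductive step I would establish the single-step telescoping identity
\[
\zeta^{\flat}_N(\bk)-\zeta^{\flat}_{N-1}(\bk)=\frac{1}{(N-1)^{k_r}}\,\zeta^{\flat}_{N-1}(k_1,\ldots,k_{r-1}),
\]
with the empty-index value on the right interpreted as $1$ when $r=1$. Combined with the trivial single-step identity $\zeta_N(\bk)-\zeta_{N-1}(\bk)=\frac{1}{(N-1)^{k_r}}\zeta_{N-1}(k_1,\ldots,k_{r-1})$ (obtained by isolating the summands with $n_r=N-1$) and the inductive hypothesis applied at weight $\wt(\bk)-k_r$ and at $N-1$, this yields the claim.

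To prove the telescoping identity for $\zeta^{\flat}$, I would split its defining sum according to whether $n_{r,k_r}=N-1$ (part (i)) or $n_{r,k_r}\leq N-2$ (part (ii)). Part (ii) has the same index-tuple support as $\zeta^{\flat}_{N-1}(\bk)$ but with weights $\frac{1}{N-n_{i,1}}$ in place of $\frac{1}{(N-1)-n_{i,1}}$, producing a discrepancy that must be controlled by the elementary partial fraction
\[
\frac{1}{N-n}-\frac{1}{(N-1)-n}=-\frac{1}{(N-n)\bigl((N-1)-n\bigr)},
\]
applied iteratively across the blocks. The resulting expressions should recombine with the ``missing'' piece of part (i) (everything in part (i) except the one contribution matching the right-hand side) to yield exactly the claimed difference.

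The hardest part will be the combinatorial bookkeeping of these weight discrepancies across all $r$ blocks: already for $\bk=(2,2)$, $N=4$ one checks that neither part (i) nor part (ii) alone matches the expected summand, so the two must balance in a coordinated way. A more conceptual route, aligned with the methodology used elsewhere in this paper, is the connected sum method of Seki--Yamamoto: introduce a bivariate connector $F_N(\bk_1;\bk_2)$ whose two boundary specializations ($\bk_1=\bk,\bk_2=\varnothing$ versus $\bk_1=\varnothing,\bk_2=\bk$) recover $\zeta_N(\bk)$ and $\zeta^{\flat}_N(\bk)$ respectively, and prove a transport identity moving one ``letter'' at a time from $\bk_1$ to $\bk_2$ via a single partial-fraction computation on the connector. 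Iterating the transport then identifies the two boundary values. I expect this to be the strategy actually employed in the MSW paper, since it bypasses the block-by-block bookkeeping sketched above and fits naturally into the $\dia$-framework used in the rest of the present article.
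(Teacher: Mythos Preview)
Your proposal is not a proof: you sketch two strategies and execute neither. The telescoping identity you target,
\[
\zeta^{\flat}_N(\bk)-\zeta^{\flat}_{N-1}(\bk)=\frac{1}{(N-1)^{k_r}}\,\zeta^{\flat}_{N-1}(k_1,\ldots,k_{r-1}),
\]
is true (indeed it is equivalent to the theorem), and for depth~$1$ your partial-fraction idea closes with a one-line telescope. But for general depth the replacement $\frac{1}{N-n_{i,1}}\to\frac{1}{(N-1)-n_{i,1}}$ across all $r$ blocks produces cross terms (either $2^{r}-1$ of them by full expansion, or $r$ mixed terms via $\prod_i a_i-\prod_i b_i=\sum_j\bigl(\prod_{i<j}a_i\bigr)(a_j-b_j)\bigl(\prod_{i>j}b_i\bigr)$), and you do not show how these recombine with the residual piece of part~(i). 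That recombination \emph{is} the content of the theorem, and your own $\bk=(2,2)$, $N=4$ check confirms it is nontrivial. The connected-sum suggestion is only named, not carried out.

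As for the paper's approach: it does not prove this statement directly but cites it from the MSW preprint, and then establishes the strict generalisation $\zeta_N\!\bigl(\genfrac{}{}{0pt}{}{\bl}{\bk}\bigr)=\zeta^{\flat}_N\!\bigl(\genfrac{}{}{0pt}{}{\bl}{\bk}\bigr)$, of which the MSW formula is the case $\bl=\{1\}^{\dep(\bk)}$. That generalisation is proved by quoting the Hirose--Matsusaka--Seki identity for the discretised multiple polylogarithm with parameters $x_1,\dots,x_r$, expanding both sides as formal power series in $t_j=N-Nx_j$, and comparing coefficients of $t_1^{l_1-1}\cdots t_r^{l_r-1}$; the MSW formula drops out as the constant term. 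This is neither induction on $N$ nor a connected-sum transport, so even if your guess about connected sums matches the original MSW preprint, it does not match the argument given here.
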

This formula can be generalized as follows.
\begin{definition}
Let $\bk=(k_1,\dots, k_r)$ and $\bl=(l_1,\dots, l_r)$ be indices of the same depth.
For a positive integer $N$, we define $\zeta^{}_N\left(\genfrac{}{}{0pt}{}{\bl}{\bk}\right)$ and $\zeta^{\flat}_N\left(\genfrac{}{}{0pt}{}{\bl}{\bk}\right)$ as
\[
\zeta^{}_N\left(\genfrac{}{}{0pt}{}{\bl}{\bk}\right)\coloneqq\sum_{\substack{0<m_{j,1}\leq \cdots \leq m_{j,l_j}< N \ (j\in[r]) \\ m_{j,l_j}<m_{j+1,1} \ (j\in[r-1])}}\prod_{j\in[r]}\frac{1}{(N-m_{j,1})\cdots (N-m_{j,l_{j-1}})m_{j,l_j}^{k_j}}
\]
and
\[
\zeta^{\flat}_N\left(\genfrac{}{}{0pt}{}{\bl}{\bk}\right)\coloneqq \sum_{\substack{0<n_{i,1}\leq \cdots \leq n_{i,k_i}<N \ (i\in [r]) \\ n_{i,k_i}<n_{i+1,1} \ (i\in[r-1])}}\prod_{i\in[r]}\frac{1}{(N-n_{i,1})^{l_i}n_{i,2}\cdots n_{i,k_i}},
\]
respectively.
\end{definition}
\begin{theorem}\label{thm:flat-duality}
For every positive integer $N$ and every pair of indices $\bk$, $\bl$ of the same depth, we have
\[
\zeta^{}_N\left(\genfrac{}{}{0pt}{}{\bl}{\bk}\right)=\zeta^{\flat}_N\left(\genfrac{}{}{0pt}{}{\bl}{\bk}\right).
\]
\end{theorem}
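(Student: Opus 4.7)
The plan is to extend the connected-sum proof of Theorem~\ref{thm:MSW} from \cite{MaesakaSekiWatanabe-pre} so as to carry the additional upper index $\bl$. First, I would introduce an auxiliary connected sum
\[
C_N\!\left(\genfrac{}{}{0pt}{}{\bl^{(L)}}{\bk^{(L)}} \;\middle|\; \genfrac{}{}{0pt}{}{\bl^{(R)}}{\bk^{(R)}}\right),
\]
consisting of a ``$\zeta_N$-shaped'' left portion governed by $(\bk^{(L)},\bl^{(L)})$, a ``$\zeta^{\flat}_N$-shaped'' right portion governed by $(\bk^{(R)},\bl^{(R)})$, and a connector at the interface built from the largest left variable $a$ and the smallest right variable $b$. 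The connector is to be chosen so that the fully-left state reproduces $\zeta_N\!\left(\genfrac{}{}{0pt}{}{\bl}{\bk}\right)$, while the fully-right state reproduces $\zeta^{\flat}_N\!\left(\genfrac{}{}{0pt}{}{\bl}{\bk}\right)$.

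Next, I would prove two ``transport'' invariances for $C_N$: one that peels a bottom-index symbol off the right end of $\bk^{(L)}$ and grafts it onto the left end of $\bk^{(R)}$, and one that does the analogous move for the upper index, transferring the rightmost entry of $\bl^{(L)}$ to the leading slot of $\bl^{(R)}$. After clearing the connector, each transport should reduce to an elementary partial-fraction identity such as
\[
\frac{1}{a(N-a)}=\frac{1}{N}\!\left(\frac{1}{a}+\frac{1}{N-a}\right),
\]
or a telescoping $\tfrac{1}{n(n+1)}=\tfrac{1}{n}-\tfrac{1}{n+1}$ of the type already present in the MSW argument. Iterating the transports carries the state from the fully-left configuration to the fully-right one while preserving the value of $C_N$, and the two boundary evaluations then yield the theorem.

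Finally, I would verify the endpoint behavior: empty blocks should force the connector to collapse to~$1$, and the strict/weak inequality conditions at the interface (strict between blocks, weak within a block on the $\zeta_N$-side, and likewise on the $\zeta^{\flat}_N$-side) must remain consistent through each transport step. As a warm-up and consistency check, I would first verify the depth-one case $r=1$ by direct induction on $k+l$, since the general transport rules specialize to this case in a particularly clean way.

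\textbf{Main obstacle.} The crux is designing a single connector whose transport law can move both a bottom-index symbol and an upper-index symbol. In \cite{MaesakaSekiWatanabe-pre} the upper index is trivially $(1,\dots,1)$, so the connector only has to cope with one type of move; for general $\bl$, it must simultaneously track how many upper slots on each side have already been consumed, most likely through factors of the shape $(N-a)^{-\alpha}\,b^{-\beta}\,\binom{N-1}{a-1}^{-1}$ in which $\alpha,\beta$ record the partially processed blocks. Ensuring that both transports preserve the \emph{same} connector, and that the mixed strict/weak interface inequalities remain correctly tracked under each move, is the main technical difficulty.
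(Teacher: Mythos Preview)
Your plan is not wrong in spirit, but it takes a substantially harder route than the paper, and the part you flag as the ``main obstacle'' is exactly the part you have not resolved. The paper bypasses the connector-design problem entirely by a generating-function argument: it invokes the identity of Hirose--Matsusaka--Seki \cite[Theorem~1.2]{HiroseMatusakaSeki-pre},
\[
\sum_{0<m_1<\cdots<m_r<N}\frac{1}{m_1^{k_1}\cdots m_r^{k_r}}\prod_{j\in[r]}\frac{\binom{Nx_{j+1}-1}{m_j}}{\binom{Nx_j-1}{m_j}}
=\sum_{\substack{0<n_{j,1}\leq \cdots \leq n_{j,k_j}<N \\ n_{j,k_j}<n_{j+1,1}}}\prod_{j\in[r]}\frac{1}{(Nx_j-n_{j,1})n_{j,2}\cdots n_{j,k_j}},
\]
sets $t_j\coloneqq N-Nx_j$, and expands each side as a power series in $t_1,\dots,t_r$. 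On the left, the product $\prod_{m_{j-1}<m\le m_j}\frac{N-m}{N-m-t_j}$ expands so that the coefficient of $t_1^{l_1-1}\cdots t_r^{l_r-1}$ is $\zeta_N\!\left(\genfrac{}{}{0pt}{}{\bl}{\bk}\right)$; on the right, expanding $\frac{1}{Nx_j-n_{j,1}}=\sum_{l\ge 1}\frac{t_j^{l-1}}{(N-n_{j,1})^l}$ gives coefficient $\zeta_N^{\flat}\!\left(\genfrac{}{}{0pt}{}{\bl}{\bk}\right)$. Comparing coefficients finishes the proof in a few lines.

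In other words, the upper index $\bl$ enters the paper's argument only as the exponent in a Taylor expansion of indeterminates $t_j$, so no new connected-sum machinery is needed at all; the heavy lifting is already contained in the cited identity (which is a parametrized version of \cref{thm:MSW}). Your approach would amount to reproving that identity from scratch via a two-layer transport, and while such a proof may well exist, you have not supplied the connector that makes both transports work simultaneously. Absent that, the proposal has a genuine gap at its central step.
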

\begin{remark}
The case $\bl=\{1\}^{\dep(\bk)}$ of \cref{thm:flat-duality} is \cref{thm:MSW}.
By applying the transformation of the form $n\mapsto N-n$, this theorem can also be expressed as a certain kind of duality:
\[
\zeta^{}_N\left(\genfrac{}{}{0pt}{}{l_1,\dots, l_r}{k_1,\dots, k_r}\right) = \zeta^{}_N\left(\genfrac{}{}{0pt}{}{k_r,\dots, k_1}{l_r,\dots, l_1}\right).
\]
As can be seen from the proof below, this theorem is equivalent to the discrete integral expression of the multiple polylogarithm proved by Hirose--Matsusaka--Seki.
\end{remark}
\begin{proof}
For indeterminates $x_1, \dots, x_r$ with $x_{r+1}=1$, an identity
\begin{align*}
&\sum_{0<m_1<\cdots<m_r<N}\frac{1}{m_1^{k_1}\cdots m_r^{k_r}}\prod_{j\in[r]}\frac{\binom{Nx_{j+1}-1}{m_j}}{\binom{Nx_j-1}{m_j}}\\
&=\sum_{\substack{0<n_{j,1}\leq \cdots \leq n_{j,k_j}<N \ (j\in[r]) \\ n_{j,k_j}<n_{j+1,1} \ (j\in[r-1])}}\prod_{j\in[r]}\frac{1}{(Nx_j-n_{j,1})n_{j,2}\cdots n_{j,k_j}}
\end{align*}
holds (\cite[Theorem~1.2]{HiroseMatusakaSeki-pre}).
Let $t_j\coloneqq N-Nx_j$ for each $j\in[r]$.
Since
\begin{align*}
\prod_{m_{j-1}<m\leq m_j}\frac{N-m}{Nx_j-m}&=\prod_{m_{j-1}<m\leq m_j}\frac{N-m}{N-m-t_j}\\
&=\sum_{l=1}^{\infty}\sum_{m_{j-1}<m_{j,1}\leq \cdots \leq m_{j,l-1}\leq m_{j}}\frac{t_j^{l-1}}{(N-m_{j,1})\cdots (N-m_{j,l-1})},
\end{align*}
we have
\begin{align*}
&\sum_{0<m_1<\cdots<m_r<N}\frac{1}{m_1^{k_1}\cdots m_r^{k_r}}\prod_{j\in[r]}\frac{\binom{Nx_{j+1}-1}{m_j}}{\binom{Nx_j-1}{m_j}}\\
&=\sum_{0<m_1<\cdots<m_r<N}\frac{1}{m_1^{k_1}\cdots m_r^{k_r}}\prod_{j\in[r]}\prod_{m_{j-1}<m\leq m_j}\frac{N-m}{Nx_j-m}\\
&=\sum_{l_1=1}^{\infty}\cdots \sum_{l_r=1}^{\infty}\zeta^{}_N\left(\genfrac{}{}{0pt}{}{l_1,\dots, l_r}{k_1,\dots, k_r}\right)t_1^{l_1-1}\cdots t_r^{l_r-1}.
\end{align*}
When $j=0$, read $m_{j-1}$ as 0.

On the other hand, by the expansion
\[
\frac{1}{Nx_j-n_{j,1}}=\sum_{l=1}^{\infty}\frac{t_j^{l-1}}{(N-n_{j,1})^{l}},
\]
we have
\begin{align*}
&\sum_{\substack{0<n_{j,1}\leq \cdots \leq n_{j,k_j}<N \ (j\in[r]) \\ n_{j,k_j}<n_{j+1,1} \ (j\in[r-1])}}\prod_{j\in[r]}\frac{1}{(Nx_j-n_{j,1})n_{j,2}\cdots n_{j,k_j}}\\
&=\sum_{l_1=1}^{\infty}\cdots \sum_{l_r=1}^{\infty}\zeta^{\flat}_N\left(\genfrac{}{}{0pt}{}{l_1,\dots, l_r}{k_1,\dots, k_r}\right)t_1^{l_1-1}\cdots t_r^{l_r-1}.
\end{align*}
Hence, the conclusion is obtained by comparing coefficients.
\end{proof}
Using \cref{thm:flat-duality}, we can also obtain a discrete iterated integral expression of $\zeta^{\dia}_N(\bk)$.
\begin{definition}
For a positive integer $N$ and an admissible index $\bk=(k_1,\dots, k_r)$, we define $\zeta^{\dia\flat}_N(\bk)$ as 
\[
\zeta^{\dia\flat}_N(\bk)\coloneqq \sum_{\substack{0<n_{i,1}\leq \cdots \leq n_{i,k_i}<N \ (i\in [r]) \\ n_{i,k_i}\leq n_{i+1,1} \ (i\in[r]^1_{\bk}) \\ n_{i,k_i}<n_{i+1,1} \ (i\in[r-1]\setminus [r]^1_{\bk})}}\prod_{i\in[r]}\frac{1}{(N-n_{i,1})n_{i,2}\cdots n_{i,k_i}}.
\]
\end{definition}
\begin{corollary}[Discrete iterated integral expression of $\zeta^{\dia}_N(\bk)$]\label{cor:dia-discrete_integral}
For every admissible index $\bk$, we have
\[
\zeta^{\dia}_N(\bk)=\zeta^{\dia\flat}_N(\bk).
\]
\end{corollary}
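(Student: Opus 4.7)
The plan is to decompose both sides over subsets $A\subset[r]^1_{\bk}$ and match the resulting summands using \cref{thm:flat-duality}. Given such an $A$ (where $r=\dep(\bk)$), partition $[r]$ into maximal consecutive blocks $B_1,\dots,B_s$ in which $i$ and $i+1$ belong to the same block precisely when $i\in A$. Writing $B_j=\{i_j,i_j+1,\dots,i_j+b_j-1\}$, by construction the inner positions $i_j,\dots,i_j+b_j-2$ all lie in $A\subset[r]^1_{\bk}$ (so $k_i=1$ there), while the last position $i_j+b_j-1$ does not. Set $\bl(A)\coloneqq(b_1,\dots,b_s)$ and $\bk'(A)\coloneqq(k_{i_1+b_1-1},\dots,k_{i_s+b_s-1})$.

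On the $\dia$-side, the $A$-summand in \cref{def:dia} is already in the right shape: within each block the constraints on $(n_i)_{i\in B_j}$ form a weak chain $n_{i_j}\le\cdots\le n_{i_j+b_j-1}$, while across blocks the inequality is strict, and the weight factors as $\prod_j\frac{1}{(N-n_{i_j})\cdots(N-n_{i_j+b_j-2})\,n_{i_j+b_j-1}^{k_{i_j+b_j-1}}}$. Under the relabeling $n_{i_j+t}\mapsto m_{j,t+1}$, this summand equals $\zeta_N\bigl(\genfrac{}{}{0pt}{}{\bl(A)}{\bk'(A)}\bigr)$.

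On the $\dia\flat$-side, I would split each weak inequality $n_{i,k_i}\le n_{i+1,1}$ for $i\in[r]^1_{\bk}$ into an equality (record $i\in A$) or a strict inequality ($i\notin A$), again parametrized by $A\subset[r]^1_{\bk}$. Using the same block partition, the forced equalities inside each block, combined with $k_i=1$ on the inner positions, collapse $n_{i_j,1}=\cdots=n_{i_j+b_j-1,1}$ to a single value $m_j$. The weight on block $B_j$ then simplifies to $\frac{1}{(N-m_j)^{b_j}\,n_{i_j+b_j-1,2}\cdots n_{i_j+b_j-1,k_{i_j+b_j-1}}}$, and the surviving variables exhibit this piece as $\zeta^{\flat}_N\bigl(\genfrac{}{}{0pt}{}{\bl(A)}{\bk'(A)}\bigr)$.

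Applying \cref{thm:flat-duality} to each pair $(\bl(A),\bk'(A))$ identifies the two summands, and summing over all $A\subset[r]^1_{\bk}$ completes the proof. The only nontrivial step is verifying that the block length $b_j$ on the $\dia$-side matches the multiplicity $(N-m_j)^{b_j}$ arising from the collapse of first coordinates on the $\dia\flat$-side; beyond this purely combinatorial bookkeeping, no further analytic input is required.
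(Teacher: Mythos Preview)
Your argument is correct and coincides with the paper's own proof: the paper writes the same decomposition compactly as $\zeta^{\dia}_N(\bk)=\sum_{A\subset [r]^1_{\bk}}\zeta^{}_N\bigl(\genfrac{}{}{0pt}{}{(\{1\}^r)_{(+A)}}{\bk_{(-A)}}\bigr)$ and $\zeta^{\dia\flat}_N(\bk)=\sum_{A\subset [r]^1_{\bk}}\zeta^{\flat}_N\bigl(\genfrac{}{}{0pt}{}{(\{1\}^r)_{(+A)}}{\bk_{(-A)}}\bigr)$, then applies \cref{thm:flat-duality} termwise. Your $(\bl(A),\bk'(A))$ is exactly $\bigl((\{1\}^r)_{(+A)},\bk_{(-A)}\bigr)$ unpacked via the block description, so the approaches are identical.
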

\begin{proof}
From the definitions and \cref{thm:flat-duality}, one can compute that 
\[
\zeta^{\dia}_N(\bk)=\sum_{A\subset [r]^1_{\bk}}\zeta^{}_N\left(\genfrac{}{}{0pt}{}{(\{1\}^r)_{(+A)}}{\bk_{(-A)}}\right)=\sum_{A\subset[r]^1_{\bk}}\zeta^{\flat}_N\left(\genfrac{}{}{0pt}{}{(\{1\}^r)_{(+A)}}{\bk_{(-A)}}\right)=\zeta^{\dia\flat}_N(\bk),
\]
where $r=\dep(\bk)$.
\end{proof}
In \cref{sec:Kawashima}, we prove that more relations are satisfied, but from this expression, it is clear that the duality $\zeta^{\dia}(\bk)=\zeta^{\dia}(\bk^{\dagger})$ holds via the transformation of the form $n\mapsto N-n$.

\section{Difference calculation}\label{sec:difference}
Throughout this section, we fix a positive integer $N$ and an even-depth index $\bc=(c_1,\dots, c_{2s})$.
Also, recall the notation explained after the definition of $\cD$ in \cref{subsec:1st-main}.
For a non-negative integer $n$, $[n]_0\coloneqq [n]\cup\{0\}$.
We define $f_N(\bc)$ and $g_N(\bc)$ by
\begin{align*}
f_N(\bc)&\coloneqq \zeta^{\dia}_N(\{1\}^{c_1-1},c_2+1,\dots, \{1\}^{c_{2s-1}-1},c_{2s}+1), \\
g_N(\bc)&\coloneqq f_{N+1}(\bc).
\end{align*}
For $\bn=(n_{i,j})_{i\in[2s],j\in[c_i]}$ and a subset $A\subset[2s]$, let
\[
P_{N,\bc}(\bn;A)\coloneqq\Biggl(\prod_{\substack{i\in[2s]\setminus A \\ i:\text{odd}}}\prod_{j\in[c_i]}\frac{1}{N-n_{i,j}}\Biggr)\Biggl(\prod_{\substack{i\in[2s]\setminus A \\ i:\text{even}}}\prod_{j\in[c_i]}\frac{1}{n_{i,j}}\Biggr).
\]
Then, by \cref{cor:dia-discrete_integral}, we have
\[
f_N(\bc)=\sum_{\substack{0<n_{i,1}\leq \cdots \leq n_{i,c_i}<N \ (i\in [2s]) \\ n_{i,c_i}\leq n_{i+1,1} \ (i\in[2s-1], \ i:\text{odd}) \\ n_{i,c_i}<n_{i+1,1} \ (i\in [2s-1], \ i:\text{even})}}P_{N,\bc}(\bn;\varnothing)
\]
and
\[
g_N(\bc)=\sum_{\substack{0\leq n_{i,1}\leq \cdots \leq n_{i,c_i}\leq N \ (i\in [2s]) \\ n_{i,c_i}\leq n_{i+1,1} \ (i\in[2s-1], \ i:\text{even}) \\ n_{i,c_i}<n_{i+1,1} \ (i\in [2s-1], \ i:\text{odd})}}P_{N,\bc}(\bn;\varnothing)
\]
(by replacing $n_{i,j}$ with $n_{i,j}+1$ only when $i$ is odd).
Furthermore, we define $h_N(\bc)$ by
\[
h_N(\bc)\coloneqq \sum_{A\subset[2s]^1_{\bc}}\frac{1}{N^{\#A}}\sum_{\substack{0\leq n_{i,1}\leq \cdots \leq n_{i,c_i}<N \ (i\in [2s]) \\ n_{i,c_i}<n_{i+1,1} \ (i\in [2s-1]_0, \ i+1\not\in A) \\ n_{i,c_i}=n_{i+1,1} \ (i\in [2s-1]_0, \ i+1\in A)}}P_{N,\bc}(\bn;A).
\]
When $i=0$, read $n_{i,c_i}$ as $0$.

\begin{lemma}\label{lem:f-h}
\begin{equation}\label{eq:f=h}
\sum_{\substack{A\subset[2s]^{>1}_{\bc} \\ \{2r-1,2r\}\not\subset A \ (r\in[s])}}\frac{(-1)^{\#A}}{N^{\#A}}f_N(\bc-\bdelta_A)=\sum_{\substack{A\subset [2s]^1_{\bc} \\ A:\textnormal{odd-even}}}\frac{(-1)^{\#A/2}}{N^{\#A}}h_N(\bc_{(-A)}).
\end{equation}
\end{lemma}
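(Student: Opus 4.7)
The plan is to prove this identity via the partial-fraction identity
\[
\frac{1}{(N-n)\,n}=\frac{1}{N}\left(\frac{1}{N-n}+\frac{1}{n}\right),
\]
applied systematically at the odd–even junctions in the discrete iterated integral expression of $f_N(\bc-\bdelta_A)$ supplied by \cref{cor:dia-discrete_integral}. First, I would substitute that expression for each $f_N(\bc-\bdelta_A)$ on the LHS, and split every odd–even junction constraint $n_{2r-1,c_{2r-1}-\delta_{2r-1\in A}}\leq n_{2r,1}$ into a strict part ($<$) and an equality part ($=$).

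In the equality case, with common value $n$, the factor $\frac{1}{N-n}\cdot\frac{1}{n}$ coming from the last variable of the odd block and the first of the even block is replaced by $\frac{1}{N}(\frac{1}{N-n}+\frac{1}{n})$, with $n$ then interpreted as belonging to either the odd block or the even block. The constraint $\{2r-1,2r\}\not\subset A$ in the LHS ensures this rewriting does not conflict with any earlier reduction coming from $A$. Expanding in this way at each junction and regrouping, the LHS becomes a signed sum over triples $(A,S,\varepsilon)$, where $A\subset[2s]^{>1}_{\bc}$ is the original subset, $S\subset[s]$ indexes the equality junctions, and $\varepsilon$ records the side chosen by partial fraction at each $r\in S$. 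For any junction $2r-1\to 2r$ where at least one of $c_{2r-1},c_{2r}$ is strictly greater than $1$, a sign cancellation between the two side choices and the two inclusion/exclusion choices for whether $2r-1$ or $2r$ lies in $A$ annihilates the corresponding contributions.

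The only surviving contributions are those in which every equality junction has both $c_{2r-1}=c_{2r}=1$; the corresponding pair of positions forms an odd–even subset $A'\subset[2s]^1_{\bc}$ of collapsed indices. The factor $\frac{1}{N^{\#A'}}$ then comes from the partial-fraction prefactors, the sign $(-1)^{\#A'/2}$ records the number of collapsed pairs, and the remaining variable configuration — including further equality constraints at any surviving position with $c_i=1$ — reproduces the inner sum appearing in the definition of $h_N(\bc_{(-A')})$, matching the RHS term by term. The main obstacle is the careful bookkeeping of signs and constraints in this cancellation: in particular, one must track how the residual subset inside $h_N(\bc_{(-A')})$ (which itself carries a signed sum over subsets of $[2s-\#A']^1_{\bc_{(-A')}}$) arises from the choices of equality junctions not absorbed by $A'$. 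I would organize the argument by induction on $s$, taking $s=1$ as the base case verifiable by the explicit partial-fraction computation underlying Kaneko–Sakata's theorem, and in the inductive step peeling off the last block pair $(c_{2s-1},c_{2s})$ after isolating the contributions of the final junction.
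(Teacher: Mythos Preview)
Your partial-fraction idea is indeed the arithmetic heart of the argument (the paper's identity $(N-n)+n=N$ is dual to it), and once correctly executed it leads to essentially the same proof as the paper's. However, your description of \emph{which} terms cancel is wrong, and this error propagates through the rest of the sketch.

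Specifically, your claim that ``for any junction $2r-1\to 2r$ where at least one of $c_{2r-1},c_{2r}$ is strictly greater than $1$ \dots\ a sign cancellation annihilates the corresponding contributions'' is false. The cancellation occurs precisely when \emph{both} $c_{2r-1}>1$ and $c_{2r}>1$: in that case the three admissible options for $A$ on the pair (neither, only $2r-1$, only $2r$) combine with the partial fraction to kill the equality contribution. When exactly one of $c_{2r-1},c_{2r}$ equals $1$, only one side of the pair can lie in $A$, and the partial fraction leaves a nonzero residual factor (namely $\frac{N-n}{N}$ or $\frac{n}{N}$). These residuals are essential: they are what feed the inner sum of $h_N$. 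Consequently your next claim, that the surviving contributions have ``every equality junction with both $c_{2r-1}=c_{2r}=1$,'' is also wrong. The surviving equality junctions occur at positions where $c_i=1$ for a \emph{single} member of the pair, subject to the constraint $\{2r-1,2r\}\not\subset B$; the odd-even subset $A'$ on the RHS then arises by a separate inclusion--exclusion over those pairs with \emph{both} $c_{2r-1}=c_{2r}=1$, which the paper carries out explicitly on the RHS.

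Two smaller issues: first, $h_N$ carries an \emph{unsigned} sum over subsets of $[2s]^1_{\bc}$, not a signed one as you state; the sign $(-1)^{\#A/2}$ lives only in the outer RHS sum. Second, matching the residual terms to $h_N$ requires a variable-relabelling step (the paper's ``whenever $2r-1\in B$, replace $n_{2r-2,c_{2r-2}}<n_{2r-1,1}=n_{2r,1}$ with $n_{2r-2,c_{2r-2}}=n_{2r-1,1}<n_{2r,1}$'') that your sketch omits; without it the equality constraints you obtain are at the wrong junction. The induction on $s$ is neither needed nor helpful once the cancellation is correctly identified --- the argument works directly for all blocks at once.
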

\begin{proof}
By the definition of $f_N$, we have
\begin{align*}
\text{LHS of }\eqref{eq:f=h}&=\sum_{\substack{A\subset[2s]^{>1}_{\bc} \\ \{2r-1,2r\}\not\subset A \ (r\in[s])}}(-1)^{\#A}\sum_{\substack{0<n_{i,1}\leq \cdots \leq n_{i,c_i}<N \ (i\in [2s]) \\ n_{i,c_i}\leq n_{i+1,1} \ (i\in[2s-1], \ i:\text{odd}, \ i\not\in A \text{ and } i+1\not\in A) \\ n_{i,c_i}=n_{i+1,1} \ (i\in[2s-1], \ i:\text{odd}, \ i\in A \text{ or } i+1\in A) \\ n_{i,c_i}<n_{i+1,1} \ (i\in [2s-1], \ i:\text{even})}}\\
&\qquad P_{N,\bc}(\bn;\varnothing)\Biggl(\prod_{\substack{i\in A \\ i:\text{odd}}}\frac{N-n_{i,c_i}}{N}\Biggr)\Biggl(\prod_{\substack{i\in A \\ i:\text{even}}}\frac{n_{i,1}}{N}\Biggr).
\end{align*}
By the correspondence $A\mapsto S\coloneqq\{r\in [s] \mid 2r-1 \in A \text{ or } 2r\in A\}$, we compute
\begin{align*}
\text{LHS of }\eqref{eq:f=h} &=\sum_{S\subset[s]}(-1)^{\#S}\sum_{\substack{0<n_{i,1}\leq \cdots \leq n_{i,c_i}<N \ (i\in [2s]) \\ n_{i,c_i}\leq n_{i+1,1} \ (i\in[2s-1], \ i:\text{odd}, \ \frac{i+1}{2}\not\in S) \\ n_{i,c_i}=n_{i+1,1} \ (i\in[2s-1], \ i:\text{odd}, \ \frac{i+1}{2}\in S) \\ n_{i,c_i}<n_{i+1,1} \ (i\in [2s-1], \ i:\text{even})}}P_{N,\bc}(\bn;\varnothing) \\
&\qquad \times\Biggl(\prod_{r\in S}\frac{(1-\delta_{1,c_{2r-1}})(N-n_{2r-1,c_{2r-1}})+(1-\delta_{1,c_{2r}})n_{2r,1}}{N}\Biggr) \\
&=\sum_{S\subset[s]}(-1)^{\#S}\sum_{S\subset T\subset [s]}\sum_{\substack{0<n_{i,1}\leq \cdots \leq n_{i,c_i}<N \ (i\in [2s]) \\ n_{i,c_i} < n_{i+1,1} \ (i\in[2s-1], \ i:\text{odd}, \ \frac{i+1}{2}\not\in T) \\ n_{i,c_i}=n_{i+1,1} \ (i\in[2s-1], \ i:\text{odd}, \ \frac{i+1}{2}\in T) \\ n_{i,c_i}<n_{i+1,1} \ (i\in [2s-1], \ i:\text{even})}}P_{N,\bc}(\bn;\varnothing) \\
&\qquad \times\Biggl(\prod_{r\in S}\frac{(1-\delta_{1,c_{2r-1}})(N-n_{2r-1,c_{2r-1}})+(1-\delta_{1,c_{2r}})n_{2r,1}}{N}\Biggr)\\
&=\sum_{T\subset[s]}\sum_{\substack{0<n_{i,1}\leq \cdots \leq n_{i,c_i}<N \ (i\in [2s]) \\ n_{i,c_i} < n_{i+1,1} \ (i\in[2s-1], \ i:\text{odd}, \ \frac{i+1}{2}\not\in T) \\ n_{i,c_i}=n_{i+1,1} \ (i\in[2s-1], \ i:\text{odd}, \ \frac{i+1}{2}\in T) \\ n_{i,c_i}<n_{i+1,1} \ (i\in [2s-1], \ i:\text{even})}}P_{N,\bc}(\bn;\varnothing) \\
&\qquad \times\sum_{S\subset T}(-1)^{\#S}\Biggl(\prod_{r\in S}\frac{(1-\delta_{1,c_{2r-1}})(N-n_{2r-1,c_{2r-1}})+(1-\delta_{1,c_{2r}})n_{2r,1}}{N}\Biggr).
\end{align*}
Here, $\delta_{i,j}$ is the Kronecker delta.
When $n_{2r-1,c_{2r-1}}=n_{2r,1}$ for every $r\in T$, we obtain
\begin{align*}
&\sum_{S\subset T}(-1)^{\#S}\Biggl(\prod_{r\in S}\frac{(1-\delta_{1,c_{2r-1}})(N-n_{2r-1,c_{2r-1}})+(1-\delta_{1,c_{2r}})n_{2r,1}}{N}\Biggr)\\
&=\sum_{r\in T}\left(1-\frac{(1-\delta_{1,c_{2r-1}})(N-n_{2r-1,c_{2r-1}})+(1-\delta_{1,c_{2r}})n_{2r,1}}{N}\right)\\
&=\prod_{r\in T}\frac{\delta_{1,c_{2r-1}}(N-n_{2r-1,c_{2r-1}})+\delta_{1,c_{2r}}n_{2r,1}}{N}
\end{align*}
and hence we can rewrite the sum as
\begin{align*}
&\text{LHS of }\eqref{eq:f=h} \\
&=\sum_{T\subset[s]}\sum_{\substack{0<n_{i,1}\leq \cdots \leq n_{i,c_i}<N \ (i\in [2s]) \\ n_{i,c_i} < n_{i+1,1} \ (i\in[2s-1], \ i:\text{odd}, \ \frac{i+1}{2}\not\in T) \\ n_{i,c_i}=n_{i+1,1} \ (i\in[2s-1], \ i:\text{odd}, \ \frac{i+1}{2}\in T) \\ n_{i,c_i}<n_{i+1,1} \ (i\in [2s-1], \ i:\text{even})}}P_{N,\bc}(\bn;\varnothing)\prod_{r\in T}\frac{\delta_{1,c_{2r-1}}(N-n_{2r-1,c_{2r-1}})+\delta_{1,c_{2r}}n_{2r,1}}{N}\\
&=\sum_{\substack{B\subset[2s]^{1}_{\bc} \\ \{2r-1,2r\}\not\subset B \ (r\in[s])}}\frac{1}{N^{\#B}}\sum_{\substack{0<n_{i,1}\leq \cdots \leq n_{i,c_i}<N \ (i\in [2s]) \\ n_{i,c_i} < n_{i+1,1} \ (i\in[2s-1], \ i:\text{odd}, \ i\not\in B \text{ and } i+1\not\in B) \\ n_{i,c_i}=n_{i+1,1} \ (i\in[2s-1], \ i:\text{odd}, \ i\in B \text{ or } i+1\in B) \\ n_{i,c_i}<n_{i+1,1} \ (i\in [2s-1], \ i:\text{even})}}P_{N,\bc}(\bn;B)\\
&=\sum_{\substack{B\subset[2s]^{1}_{\bc} \\ \{2r-1,2r\}\not\subset B \ (r\in[s])}}\frac{1}{N^{\#B}}\sum_{\substack{0\leq n_{i,1}\leq \cdots \leq n_{i,c_i}<N \ (i\in[2s]) \\ n_{i,c_i} < n_{i+1,1} \ (i\in[2s-1]_0, \ i+1\not\in B) \\ n_{i,c_i}=n_{i+1,1} \ (i\in[2s-1]_0, \ i+1\in B)}}P_{N,\bc}(\bn;B).
\end{align*}
In this step, we consider the correspondence $B\mapsto T\coloneqq\{r\in[s]\mid 2r-1\in B \text{ or } 2r\in B\}$; whenever $2r-1\in B$, we replace $n_{2r-2,c_{2r-2}}<n_{2r-1,1}=n_{2r,1}$ with $n_{2r-2,c_{2r-2}}=n_{2r-1,1}<n_{2r,1}$.

On the other hand, by the definition of $h_N$ and the correspondence $A\mapsto U\coloneqq \{r\in[s] \mid 2r-1\in A \text{ and } 2r\in A\}$,
\begin{align*}
\text{RHS of }\eqref{eq:f=h} &=\sum_{U\subset[s]}(-1)^{\#U}\sum_{\substack{B\subset[2s]_{\bc}^1 \\ \{2r-1,2r\}\subset B \ (r\in U)}}\frac{1}{N^{\#B}}\sum_{\substack{0\leq n_{i,1}\leq \cdots \leq n_{i,c_i} < N \ (i\in[2s]) \\ n_{i,c_i}<n_{i+1,1} \ (i\in[2s-1]_0, i+1\not\in B) \\ n_{i,c_i}=n_{i+1,1} \ (i\in[2s-1]_0, i+1\in B)}}P_{N,\bc}(\bn; B)\\
&=\sum_{B\subset[2s]_{\bc}^1}\Biggl(\sum_{\substack{U\subset[s] \\ \{2r-1,2r\}\subset B \ (r\in U)}}(-1)^{\#U}\Biggr)\frac{1}{N^{\#B}}\sum_{\substack{0\leq n_{i,1}\leq \cdots \leq n_{i,c_i} < N \ (i\in[2s]) \\ n_{i,c_i}<n_{i+1,1} \ (i\in[2s-1]_0, i+1\not\in B) \\ n_{i,c_i}=n_{i+1,1} \ (i\in[2s-1]_0, i+1\in B)}}P_{N,\bc}(\bn; B).
\end{align*}
Since
\begin{align*}
\sum_{\substack{U\subset[s] \\ \{2r-1,2r\}\subset B \ (r\in U)}}(-1)^{\#U}=\begin{cases} 1 & \text{if } \{2r-1,2r\}\not\subset B \text{ for all } r\in[s], \\ 0 & \text{otherwise}\end{cases}
\end{align*}
holds, we have
\[
\text{RHS of }\eqref{eq:f=h}=\sum_{\substack{B\subset[2s]^{1}_{\bc} \\ \{2r-1,2r\}\not\subset B \ (r\in[s])}}\frac{1}{N^{\#B}}\sum_{\substack{0\leq n_{i,1}\leq \cdots \leq n_{i,c_i}<N \ (i\in[2s]) \\ n_{i,c_i} < n_{i+1,1} \ (i\in[2s-1]_0, \ i+1\not\in B) \\ n_{i,c_i}=n_{i+1,1} \ (i\in[2s-1]_0, \ i+1\in B)}}P_{N,\bc}(\bn;B).
\]
Combined with the result of the previous calculation, this completes the proof.
\end{proof}
\begin{corollary}\label{cor:h-f}
\begin{equation}\label{eq:h-f}
h_N(\bc)=\sum_{\substack{A\subset[2s]^1_{\bc} \\ A: \textnormal{odd-even}}}\sum_{\substack{B\subset[2s]^{>1}_{\bc} \\ \{2r-1,2r\}\not\subset B \ (r\in[s])}}\frac{(-1)^{\#B}}{N^{\#A+\#B}}f_N(\bc_{(-A)}-\bdelta_B).
\end{equation}
\end{corollary}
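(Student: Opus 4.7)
The plan is to deduce \cref{cor:h-f} from \cref{lem:f-h} by Möbius inversion on the lattice of odd-even subsets of $[2s]^1_{\bc}$. The key structural observation is that every odd-even subset $A\subset[2s]^1_{\bc}$ is automatically a disjoint union of entire pairs $\{2r-1,2r\}$ with $c_{2r-1}=c_{2r}=1$. Consequently $\bc_{(-A)}$ is again an even-depth index whose pair structure is exactly that of $\bc$ with some pairs removed, so \cref{lem:f-h} applies verbatim to $\bc_{(-A)}$, and the natural bijection between the reindexed positions of $\bc_{(-A)}$ and the surviving positions of $\bc$ respects both the set $[\,\cdot\,]^{>1}$ and the condition ``$\{2r-1,2r\}\not\subset B$'' on the summation index $B$.

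First I would apply \cref{lem:f-h} with $\bc$ replaced by $\bc_{(-A')}$ for each odd-even $A'\subset[2s]^1_{\bc}$, multiply both sides by $N^{-\#A'}$, and sum over $A'$. Using the identification above, the side coming from the LHS of \cref{lem:f-h} collapses to exactly the RHS of \cref{cor:h-f}. The other side becomes the double sum
\[
\sum_{A'}\sum_{T'}\frac{(-1)^{\#T'/2}}{N^{\#A'+\#T'}}h_N(\bc_{(-A'\cup T')}),
\]
where $A'$ and $T'$ range over disjoint odd-even subsets of $[2s]^1_{\bc}$.

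The concluding step is a standard inclusion-exclusion. Setting $U=A'\cup T'$, both $A'\subset U$ and $T'=U\setminus A'$ remain odd-even, and if $m$ denotes the number of pairs in $U$ then
\[
\sum_{A'\subset U,\ A'\,\text{odd-even}}(-1)^{\#T'/2}=\sum_{k=0}^{m}\binom{m}{k}(-1)^{m-k}=\delta_{m,0}.
\]
Hence only $U=\varnothing$ survives in the outer sum, leaving exactly $h_N(\bc)$ on this side and yielding the identity of \cref{cor:h-f}.

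The only real obstacle is careful bookkeeping: one has to verify that ``odd-even'' for $T'$ viewed as a subset of the reindexed positions $[2s-\#A']^1_{\bc_{(-A')}}$ is equivalent to ``odd-even'' for $T'\subset[2s]^1_{\bc}\setminus A'$ in the original indexing, and that the ``no $\{2r-1,2r\}$'' constraint on $B$ in \cref{lem:f-h} applied to $\bc_{(-A')}$ matches the analogous constraint in \cref{cor:h-f} stated in the original $[s]$. Both transfers hold because $A'$ consists of entire original pairs, so the parity and pair structure on the remaining positions are inherited intact from $\bc$.
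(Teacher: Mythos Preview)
Your proposal is correct and follows essentially the same route as the paper: apply \cref{lem:f-h} to each $\bc_{(-A')}$ for odd-even $A'\subset[2s]^1_{\bc}$, sum with weight $N^{-\#A'}$, and collapse the resulting double sum over disjoint odd-even $A',T'$ to the single term $h_N(\bc)$ via the binomial identity $\sum_{k=0}^{m}\binom{m}{k}(-1)^{m-k}=\delta_{m,0}$. Your explicit remark that odd-even subsets of $[2s]^1_{\bc}$ are unions of full pairs $\{2r-1,2r\}$, hence the reindexing preserves both $[\,\cdot\,]^{>1}$ and the pair-exclusion constraint on $B$, is exactly the bookkeeping the paper implicitly uses.
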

\begin{proof}
By \cref{lem:f-h}, we compute
\begin{align*}
\text{RHS of } \eqref{eq:h-f}&=\sum_{\substack{A\subset [2s]^1_{\bc} \\ A:\text{odd-even}}}\frac{1}{N^{\#A}}\sum_{\substack{B\subset[2s]^1_{\bc}\setminus A \\ B:\text{odd-even}}}\frac{(-1)^{\frac{\#B}{2}}}{N^{\#B}}h_N(\bc_{-(A\cup B)})\\
&=\sum_{\substack{A''\subset[2s]^1_{\bc} \\ A'':\text{odd-even}}}\Biggl(\sum_{\substack{A\subset A'' \\ A:\text{odd-even}}}(-1)^{\frac{\#A}{2}}\Biggr)\frac{(-1)^{\frac{\#A''}{2}}}{N^{\#A''}}h_N(\bc_{(-A'')}).
\end{align*}
Since
\[
\sum_{\substack{A\subset A'' \\ A: \text{odd-even}}}(-1)^{\#A/2}=0
\]
holds when $A''$ is not empty, we have the conclusion.
\end{proof}
\begin{lemma}\label{lem:g-h}
\begin{equation}\label{eq:g=h}
\sum_{\substack{A\subset[2s]^{>1}_{\bc} \\ \{2r,2r+1\}\not\subset A \ (r\in[s-1])}}\frac{(-1)^{\#A}}{N^{\#A}}g_N(\bc-\bdelta_A)=\sum_{\substack{A\subset [2s]^1_{\bc} \\ A:\textnormal{even-odd}}}\frac{(-1)^{\#A/2}}{N^{\#A}}h_N(\bc_{(-A)}).
\end{equation}
\end{lemma}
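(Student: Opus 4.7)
The plan is to mirror the proof of Lemma \ref{lem:f-h} with the roles of odd and even indices $i$ exchanged, reflecting the structural parallel between the two settings. The displayed representation
\[
g_N(\bc)=\sum_{\substack{0\leq n_{i,1}\leq \cdots \leq n_{i,c_i}\leq N \ (i\in [2s]) \\ n_{i,c_i}\leq n_{i+1,1} \ (i\in[2s-1], \ i:\text{even}) \\ n_{i,c_i}<n_{i+1,1} \ (i\in [2s-1], \ i:\text{odd})}}P_{N,\bc}(\bn;\varnothing)
\]
is precisely the analogue of the $f_N$ representation used in Lemma \ref{lem:f-h}, but with the parities of the strict and weak inequalities at the block boundaries swapped. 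Every parity-dependent choice in that proof should therefore flip.

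I would first expand the LHS of \eqref{eq:g=h} by inserting the $g_N$-representation of $g_N(\bc-\bdelta_A)$ and re-inflating each shortened block $i\in A$ back to length $c_i$ via a dummy copy of a neighbouring variable. Mimicking the $f_N$ computation, the compensation factor becomes
\[
\Biggl(\prod_{\substack{i\in A \\ i:\text{even}}}\frac{N-n_{i,c_i}}{N}\Biggr)\Biggl(\prod_{\substack{i\in A \\ i:\text{odd}}}\frac{n_{i,1}}{N}\Biggr),
\]
with parities reversed compared to the proof of Lemma \ref{lem:f-h}. The forbidden configuration is now $\{2r,2r+1\}\subset A$ for $r\in[s-1]$, since only adjacent even-then-odd positions straddle the strict-inequality gaps of the $g_N$ sum and would create an inconsistent double collapse.

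Next I would reindex by $S\coloneqq\{r\in[s-1]\mid 2r\in A \text{ or } 2r+1\in A\}$ and apply the same inclusion--exclusion telescoping as in Lemma \ref{lem:f-h}, interchanging the order of summation over $S\subset T\subset[s-1]$. The inner sum reduces to the product $\prod_{r\in T}\frac{\delta_{1,c_{2r}}n_{2r,c_{2r}}+\delta_{1,c_{2r+1}}(N-n_{2r+1,1})}{N}$, forcing the surviving contributions at each $r\in T$ to satisfy $c_{2r}=1$ or $c_{2r+1}=1$ and leaving a sum indexed by $B\subset[2s]^1_{\bc}$ with $\{2r,2r+1\}\not\subset B$ for $r\in[s-1]$. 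Expanding the RHS directly from the definition of $h_N$ and grouping even-odd subsets $A\subset [2s]^1_{\bc}$ by the bijection $A\mapsto U\coloneqq\{r\in[s-1]\mid\{2r,2r+1\}\subset A\}$, the inner alternating sum over $U$ with $\{2r,2r+1\}\subset B$ for $r\in U$ vanishes unless $\{2r,2r+1\}\not\subset B$ for every $r\in[s-1]$, producing exactly the expression obtained from the LHS.

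The main obstacle is the bookkeeping of which block boundary each $i\in A$ perturbs (the left boundary for odd $i$ in the $g_N$ case, versus even in the $f_N$ case) and the corresponding boundary cases at $i=1$ and $i=2s$; since any even-odd subset of $[2s]$ automatically avoids both $1$ and $2s$, these boundary situations are harmless, but verifying this requires care. A useful sanity check is provided by the identity $g_N(\bc)=f_{N+1}(\bc)$, which allows cross-verification against Lemma \ref{lem:f-h} applied at $N+1$.
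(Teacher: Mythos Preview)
Your overall strategy --- mirror the proof of Lemma~\ref{lem:f-h} with odd/even roles swapped --- is exactly what the paper does. However, two concrete details in your sketch are wrong and would make the argument fail as written.

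\textbf{(1) The compensation factor has the parities the wrong way round.} The structure of $P_{N,\bc}(\bn;\varnothing)$ does not change between the $f_N$ and $g_N$ arguments: odd blocks carry factors $\tfrac{1}{N-n_{i,j}}$ and even blocks carry $\tfrac{1}{n_{i,j}}$. Re-inflating a shortened odd block therefore still produces a factor $\tfrac{N-n_{i,\ast}}{N}$, and a shortened even block still produces $\tfrac{n_{i,\ast}}{N}$; what changes is \emph{which} variable $n_{i,\ast}$ is the dummy (here $n_{i,1}$ for odd $i$ and $n_{i,c_i}$ for even $i$, because the weak inequalities of $g_N$ sit at even--odd boundaries). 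The correct factor is
\[
\Biggl(\prod_{\substack{i\in A \\ i:\text{odd}}}\frac{N-n_{i,1}}{N}\Biggr)\Biggl(\prod_{\substack{i\in A \\ i:\text{even}}}\frac{n_{i,c_i}}{N}\Biggr),
\]
not the parity-swapped version you wrote. (Your later product over $T$ is actually the correct one, so this may be a slip, but the ``parities reversed'' heuristic is the wrong mental model.)

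\textbf{(2) The index set for $S$ and $T$ must be $[s]_0$, not $[s-1]$.} On the LHS the set $A\subset[2s]^{>1}_{\bc}$ can perfectly well contain $1$ (odd) or $2s$ (even); nothing forbids this. Your correspondence $A\mapsto S\subset[s-1]$ loses this information, so the map is not a bijection and the subsequent inclusion--exclusion does not go through. The paper fixes this by allowing $r\in[s]_0$, setting the conventions $c_0=c_{2s+1}=1$, $n_{0,c_0}=0$, $n_{2s+1,1}=N$; then $r=0$ records whether $1\in A$ and $r=s$ records whether $2s\in A$, and the boundary terms in the product over $T$ collapse correctly (e.g.\ at $r=0$ the term becomes $\delta_{1,c_1}(N-n_{1,1})/N$ since $n_{0,c_0}=0$). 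Your remark that ``even--odd subsets automatically avoid $1$ and $2s$'' is true but only relevant to the RHS; it does not resolve the LHS boundary cases.
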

\begin{proof}
The proof is similar to that of \cref{lem:f-h}, but because there are differences in the details, we include it here for the reader’s convenience.
By the definition of $g_N$, we have
\begin{align*}
\text{LHS of } \eqref{eq:g=h} &=\sum_{\substack{A\subset[2s]^{>1}_{\bc} \\ \{2r,2r+1\}\not\subset A \ (r\in[s-1])}}(-1)^{\#A}\sum_{\substack{0\leq n_{i,1}\leq \cdots \leq n_{i,c_i}\leq N \ (i\in [2s]) \\ n_{i,c_i}\leq n_{i+1,1} \ (i\in[2s]_0, \ i:\text{even}, \ i\not\in A \text{ and } i+1\not\in A) \\ n_{i,c_i}=n_{i+1,1} \ (i\in[2s]_0, \ i:\text{even}, \ i\in A \text{ or } i+1\in A) \\ n_{i,c_i}<n_{i+1,1} \ (i\in [2s-1], \ i:\text{odd})}}\\
&\qquad P_{N,\bc}(\bn;\varnothing)\Biggl(\prod_{\substack{i\in A \\ i:\text{odd}}}\frac{N-n_{i,1}}{N}\Biggr)\Biggl(\prod_{\substack{i\in A \\ i:\text{even}}}\frac{n_{i,c_i}}{N}\Biggr).
\end{align*}
When $i=2s$, read $n_{i+1,1}$ as $N$.
Let $c_0=c_{2s+1}=1$.
By the correspondence $A\mapsto S\coloneqq\{r\in [s]_0 \mid 2r \in A \text{ or } 2r+1\in A\}$, we compute
\begin{align*}
\text{LHS of } \eqref{eq:g=h} &=\sum_{S\subset[s]_0}(-1)^{\#S}\sum_{\substack{0\leq n_{i,1}\leq \cdots \leq n_{i,c_i}\leq N \ (i\in [2s]) \\ n_{i,c_i}\leq n_{i+1,1} \ (i\in[2s]_0, \ i:\text{even}, \ \frac{i}{2}\not\in S) \\ n_{i,c_i}=n_{i+1,1} \ (i\in[2s]_0, \ i:\text{even}, \ \frac{i}{2}\in S) \\ n_{i,c_i}<n_{i+1,1} \ (i\in [2s-1], \ i:\text{odd})}}P_{N,\bc}(\bn;\varnothing) \\
&\qquad \times\Biggl(\prod_{r\in S}\frac{(1-\delta_{1,c_{2r}})n_{2r,c_{2r}}+(1-\delta_{1,c_{2r+1}})(N-n_{2r+1,1})}{N}\Biggr) \\
&=\sum_{S\subset[s]_0}(-1)^{\#S}\sum_{S\subset T\subset [s]_0}\sum_{\substack{0\leq n_{i,1}\leq \cdots \leq n_{i,c_i}\leq N \ (i\in [2s]) \\ n_{i,c_i} < n_{i+1,1} \ (i\in[2s]_0, \ i:\text{even}, \ \frac{i}{2}\not\in T) \\ n_{i,c_i}=n_{i+1,1} \ (i\in[2s]_0, \ i:\text{even}, \ \frac{i}{2}\in T) \\ n_{i,c_i}<n_{i+1,1} \ (i\in [2s-1], \ i:\text{odd})}}P_{N,\bc}(\bn;\varnothing) \\
&\qquad \times\Biggl(\prod_{r\in S}\frac{(1-\delta_{1,c_{2r}})n_{2r,c_{2r}}+(1-\delta_{1,c_{2r+1}})(N-n_{2r+1,1})}{N}\Biggr)\\
&=\sum_{T\subset[s]_0}\sum_{\substack{0\leq n_{i,1}\leq \cdots \leq n_{i,c_i}\leq N \ (i\in [2s]) \\ n_{i,c_i} < n_{i+1,1} \ (i\in[2s]_0, \ i:\text{even}, \ \frac{i}{2}\not\in T) \\ n_{i,c_i}=n_{i+1,1} \ (i\in[2s]_0, \ i:\text{even}, \ \frac{i}{2}\in T) \\ n_{i,c_i}<n_{i+1,1} \ (i\in [2s-1], \ i:\text{odd})}}P_{N,\bc}(\bn;\varnothing) \\
&\qquad \times\sum_{S\subset T}(-1)^{\#S}\Biggl(\prod_{r\in S}\frac{(1-\delta_{1,c_{2r}})n_{2r,c_{2r}}+(1-\delta_{1,c_{2r+1}})(N-n_{2r+1,1})}{N}\Biggr).
\end{align*}
When $n_{2r,c_{2r}}=n_{2r+1,1}$ for every $r\in T$, we obtain
\begin{align*}
&\sum_{S\subset T}(-1)^{\#S}\Biggl(\prod_{r\in S}\frac{(1-\delta_{1,c_{2r}})n_{2r,c_{2r}}+(1-\delta_{1,c_{2r+1}})(N-n_{2r+1,1})}{N}\Biggr)\\
&=\sum_{r\in T}\left(1-\frac{(1-\delta_{1,c_{2r}})n_{2r,c_{2r}}+(1-\delta_{1,c_{2r+1}})(N-n_{2r+1,1})}{N}\right)\\
&=\prod_{r\in T}\frac{\delta_{1,c_{2r}}n_{2r,c_{2r}}+\delta_{1,c_{2r+1}}(N-n_{2r+1,1})}{N}
\end{align*}
and hence we can rewrite the sum as
\begin{align*}
&\text{LHS of } \eqref{eq:g=h} \\
&=\sum_{T\subset[s]_0}\sum_{\substack{0\leq n_{i,1}\leq \cdots \leq n_{i,c_i}\leq N \ (i\in [2s]) \\ n_{i,c_i} < n_{i+1,1} \ (i\in[2s]_0, \ i:\text{even}, \ \frac{i}{2}\not\in T) \\ n_{i,c_i}=n_{i+1,1} \ (i\in[2s]_0, \ i:\text{even}, \ \frac{i}{2}\in T) \\ n_{i,c_i}<n_{i+1,1} \ (i\in [2s-1], \ i:\text{odd})}}P_{N,\bc}(\bn;\varnothing)\prod_{r\in T}\frac{\delta_{1,c_{2r}}n_{2r,c_{2r}}+\delta_{1,c_{2r+1}}(N-n_{2r+1,1})}{N}\\
&=\sum_{\substack{B\subset[2s]^{1}_{\bc} \\ \{2r,2r+1\}\not\subset B \ (r\in[s-1])}}\frac{1}{N^{\#B}}\sum_{\substack{0\leq n_{i,1}\leq \cdots \leq n_{i,c_i}\leq N \ (i\in [2s]) \\ n_{i,c_i} < n_{i+1,1} \ (i\in[2s]_0, \ i:\text{even}, \ i\not\in B \text{ and } i+1\not\in B) \\ n_{i,c_i}=n_{i+1,1} \ (i\in[2s]_0, \ i:\text{even}, \ i\in B \text{ or } i+1\in B) \\ n_{i,c_i}<n_{i+1,1} \ (i\in [2s-1], \ i:\text{odd})}}P_{N,\bc}(\bn;B)\\
&=\sum_{\substack{B\subset[2s]^{1}_{\bc} \\ \{2r,2r+1\}\not\subset B \ (r\in[s-1])}}\frac{1}{N^{\#B}}\sum_{\substack{0\leq n_{i,1}\leq \cdots \leq n_{i,c_i}< N \ (i\in[2s]) \\ n_{i,c_i} < n_{i+1,1} \ (i\in[2s-1]_0, \ i+1\not\in B) \\ n_{i,c_i}=n_{i+1,1} \ (i\in[2s-1]_0, \ i+1\in B)}}P_{N,\bc}(\bn;B).
\end{align*}
In this step, we consider the correspondence $B\mapsto T\coloneqq\{r\in[s]_0\mid 2r\in B \text{ or } 2r+1\in B\}$; whenever $2r\in B$, we replace $n_{2r-1,c_{2r-1}}<n_{2r,1}=n_{2r+1,1}$ with $n_{2r-1,c_{2r-1}}=n_{2r,1}<n_{2r+1,1}$.

On the other hand, by the definition of $h_N$ and the correspondence $A\mapsto U\coloneqq \{r\in[s-1] \mid 2r\in A \text{ and } 2r+1\in A\}$,
\begin{align*}
\text{RHS of } \eqref{eq:g=h}&=\sum_{U\subset[s-1]}(-1)^{\#U}\sum_{\substack{B\subset[2s]_{\bc}^1 \\ \{2r,2r+1\}\subset B \ (r\in U)}}\frac{1}{N^{\#B}}\sum_{\substack{0\leq n_{i,1}\leq \cdots \leq n_{i,c_i} < N \ (i\in[2s]) \\ n_{i,c_i}<n_{i+1,1} \ (i\in[2s-1]_0, i+1\not\in B) \\ n_{i,c_i}=n_{i+1,1} \ (i\in[2s-1]_0, i+1\in B)}}P_{N,\bc}(\bn;B)\\
&=\sum_{B\subset[2s]_{\bc}^1}\Biggl(\sum_{\substack{U\subset[s-1] \\ \{2r,2r+1\}\subset B \ (r\in U)}}(-1)^{\#U}\Biggr)\frac{1}{N^{\#B}}\sum_{\substack{0\leq n_{i,1}\leq \cdots \leq n_{i,c_i} < N \ (i\in[2s]) \\ n_{i,c_i}<n_{i+1,1} \ (i\in[2s-1]_0, i+1\not\in B) \\ n_{i,c_i}=n_{i+1,1} \ (i\in[2s-1]_0, i+1\in B)}}P_{N,\bc}(\bn;B).
\end{align*}
Since
\begin{align*}
\sum_{\substack{U\subset[s-1] \\ \{2r,2r+1\}\subset B \ (r\in U)}}(-1)^{\#U}=\begin{cases} 1 & \text{if } \{2r,2r+1\}\not\subset B \text{ for all } r\in[s-1], \\ 0 & \text{otherwise}\end{cases}
\end{align*}
holds, we have
\[
\text{RHS of } \eqref{eq:g=h}=\sum_{\substack{B\subset[2s]^{1}_{\bc} \\ \{2r,2r+1\}\not\subset B \ (r\in[s-1])}}\frac{1}{N^{\#B}}\sum_{\substack{0\leq n_{i,1}\leq \cdots \leq n_{i,c_i}<N \ (i\in[2s]) \\ n_{i,c_i} < n_{i+1,1} \ (i\in[2s-1]_0, \ i+1\not\in B) \\ n_{i,c_i}=n_{i+1,1} \ (i\in[2s-1]_0, \ i+1\in B)}}P_{N,\bc}(\bn;B).
\]
Combined with the result of the previous calculation, this completes the proof.
\end{proof}
\begin{corollary}\label{cor:h-g}
\[
h_N(\bc)=\sum_{\substack{A\subset[2s]^1_{\bc} \\ A: \textnormal{even-odd}}}\sum_{\substack{B\subset[2s]^{>1}_{\bc} \\ \{2r,2r+1\}\not\subset B \ (r\in[s-1])}}\frac{(-1)^{\#B}}{N^{\#A+\#B}}g_N(\bc_{(-A)}-\bdelta_B).
\]
\end{corollary}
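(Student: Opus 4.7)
The plan is to adapt the proof of Corollary 4.3 to the ``$g/h$'' setting: I would replace every appeal to Lemma 4.2 with Lemma 4.4 and swap the parity role ``odd-even'' $\leftrightarrow$ ``even-odd'' throughout. The heart of the argument is a Möbius-type cancellation on even-odd subsets of $[2s]^1_{\bc}$.

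First, I would fix an even-odd subset $A\subset[2s]^1_{\bc}$ and apply Lemma 4.4 to the reduced index $\bc_{(-A)}$ (of even depth $2s-\#A$). This rewrites the inner sum $\sum_{B}(-1)^{\#B}N^{-\#B}g_N(\bc_{(-A)}-\bdelta_B)$ appearing in the claimed right-hand side as $\sum_{B'}(-1)^{\#B'/2}N^{-\#B'}h_N((\bc_{(-A)})_{(-B')})$, where $B'$ runs over even-odd subsets of $[2s]^1_{\bc}\setminus A$. Substituting this back into the right-hand side of the corollary produces the double sum
\[
\sum_{\substack{A,B'\subset[2s]^1_{\bc}\\A\cap B'=\varnothing\\A,B':\,\text{even-odd}}}\frac{(-1)^{\#B'/2}}{N^{\#A+\#B'}}\,h_N(\bc_{(-(A\cup B'))}).
\]

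Next, I would reindex by $A''=A\sqcup B'$, which is itself an even-odd subset of $[2s]^1_{\bc}$. Since $\#A$ and $\#A''$ are both even, one has $(-1)^{\#B'/2}=(-1)^{\#A''/2}(-1)^{\#A/2}$, and the coefficient of $N^{-\#A''}h_N(\bc_{(-A'')})$ collapses to $(-1)^{\#A''/2}\sum_{A\subset A'',\,A:\text{even-odd}}(-1)^{\#A/2}$. The crucial combinatorial point is that every non-empty even-odd subset $A''$ of $[2s]$ is a disjoint union of $k\geq 1$ pairs $\{2r,2r+1\}$ with $r\in[s-1]$, and the even-odd subsets of $A''$ are exactly the unions of subcollections of these pairs; hence the signed count equals $\sum_{j=0}^{k}\binom{k}{j}(-1)^j=(1-1)^k=0$. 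Only the term $A''=\varnothing$ survives, which contributes exactly $h_N(\bc)$, as required.

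The main obstacle is the bookkeeping when invoking Lemma 4.4 for the index $\bc_{(-A)}$: one needs to confirm that the pair constraint $\{2r,2r+1\}\not\subset B$ stated in the numbering of $\bc_{(-A)}$ matches the original constraint on subsets of $[2s]^{>1}_{\bc}$. Because $A$ is a union of even-odd pairs, removing $A$ shifts subsequent positions by an even amount, so parity and ``consecutiveness within pairs'' are preserved by the natural bijection $[2s]\setminus A\leftrightarrow[2s-\#A]$. Once this identification is confirmed, the argument is formal inclusion-exclusion mirroring Corollary 4.3 line by line.
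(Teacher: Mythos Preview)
Your proposal is correct and follows precisely the approach indicated by the paper, which simply states that the proof is similar to that of \cref{cor:h-f} using \cref{lem:g-h} in place of \cref{lem:f-h}. Your extra paragraph verifying the bookkeeping---that removing an even-odd set $A$ (a union of pairs $\{2r,2r+1\}$) shifts positions by an even amount, hence preserves parity and the pair constraint $\{2r,2r+1\}\not\subset B$---makes explicit a point the paper leaves implicit in the proof of \cref{cor:h-f}.
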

\begin{proof}
Using \cref{lem:g-h}, the proof is similar to the proof of \cref{cor:h-f}.
\end{proof}
\begin{theorem}[Difference equation]\label{thm:difference-eq}
Let $(\Delta f)_N(\bc)$ denote the difference $f_{N+1}(\bc)-f_N(\bc)$ $(=g_N(\bc)-f_N(\bc))$.
Then, we have
\begin{align*}
(\Delta f)_N(\bc)&=\sum_{\substack{A\subset[2s]^1_{\bc} \\ A: \textnormal{even-odd}}}\sum_{\substack{B\subset[2s]^{>1}_{\bc} \\ \#A+\#B\geq 1 \\ \{2r,2r+1\}\not\subset B \ (r\in[s-1])}}\frac{(-1)^{\#B-1}}{N^{\#A+\#B}}(\Delta f)_N(\bc_{(-A)}-\bdelta_B)\\
&\quad +\sum_{\substack{A\subset[2s]^1_{\bc} \\ A: \textnormal{even-odd}}}\sum_{\substack{B\subset[2s]^{>1}_{\bc} \\ \#A+\#B\geq 2 \\ \{2r,2r+1\}\not\subset B \ (r\in[s-1])}}\frac{(-1)^{\#B-1}}{N^{\#A+\#B}}f_N(\bc_{(-A)}-\bdelta_B)\\
&\quad +\sum_{\substack{A\subset[2s]^1_{\bc} \\ A: \textnormal{odd-even}}}\sum_{\substack{B\subset[2s]^{>1}_{\bc} \\ \#A+\#B\geq 2 \\ \{2r-1,2r\}\not\subset B \ (r\in[s])}}\frac{(-1)^{\#B}}{N^{\#A+\#B}}f_N(\bc_{(-A)}-\bdelta_B).
\end{align*}
\end{theorem}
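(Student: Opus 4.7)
The plan is to combine the two expressions for $h_N(\bc)$ provided by Corollaries~\ref{cor:h-f} and \ref{cor:h-g}. Since both expansions equal $h_N(\bc)$, we may equate the $f_N$-side and the $g_N$-side, and then isolate the $A=B=\varnothing$ terms: on the odd-even side this contributes $f_N(\bc)$, and on the even-odd side it contributes $g_N(\bc)$. Rearranging therefore produces a formula of the shape $(\Delta f)_N(\bc)=g_N(\bc)-f_N(\bc)= \text{(odd-even $f$-sum over }(A,B)\ne(\varnothing,\varnothing)) - \text{(even-odd $g$-sum over }(A,B)\ne(\varnothing,\varnothing))$.

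Next I would split the even-odd $g$-sum via $g_N=f_N+(\Delta f)_N$. The $(\Delta f)_N$ portion is exactly the first sum on the right-hand side of the theorem: its range is even-odd $A$, $B\subset[2s]^{>1}_\bc$ with $\{2r,2r+1\}\not\subset B$, and $\#A+\#B\geq 1$; the sign flips from $(-1)^{\#B}$ to $(-1)^{\#B-1}$ after moving it to the opposite side. This already accounts for the first of the three displayed sums in the conclusion.

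The remaining contribution is the difference between an odd-even $f$-sum and an even-odd $f$-sum, each taken over $(A,B)\ne(\varnothing,\varnothing)$ with the respective $B$-constraints $\{2r-1,2r\}\not\subset B$ and $\{2r,2r+1\}\not\subset B$. The key observation is the cancellation at $\#A+\#B=1$. Since odd-even and even-odd subsets are always unions of pairs, $\#A$ is always even, so $\#A+\#B=1$ forces $\#A=0$ and $\#B=1$. But with $\#B=1$, both constraints $\{2r-1,2r\}\not\subset B$ and $\{2r,2r+1\}\not\subset B$ are vacuous, so the two sums range over identical one-element sets $B=\{j\}$ with $c_j>1$; the signs are opposite, so these terms cancel. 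Consequently the surviving $f$-contributions have $\#A+\#B\geq 2$, yielding precisely Sums 2 and 3 of the theorem (even-odd with coefficient $(-1)^{\#B-1}$ from the sign flip, and odd-even with the original coefficient $(-1)^{\#B}$).

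I expect the real work to be purely bookkeeping: matching ranges, combinatorial conditions on $B$, and the signs. The only conceptual step is the parity observation $\#A\in 2\mathbb{Z}_{\geq 0}$ that forces the only interaction between the odd-even and even-odd $f$-sums to occur at $\#A=0,\#B=1$, where both side-constraints on $B$ trivialize. Once this cancellation is recognized, assembling the three displayed sums is a direct rewriting of the equality $h_N(\bc)=h_N(\bc)$ coming from Corollaries~\ref{cor:h-f} and \ref{cor:h-g}.
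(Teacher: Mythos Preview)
Your proposal is correct and follows essentially the same approach as the paper: equate the two expressions for $h_N(\bc)$ from Corollaries~\ref{cor:h-f} and \ref{cor:h-g}, split $g_N=f_N+(\Delta f)_N$, and then use the observation that the $\#A+\#B=1$ contributions coincide (the paper states this as the equality of the two $\#B=1$ sums with the different $B$-constraints). Your explicit remark that $\#A$ is always even is the same parity step underlying the paper's terse ``noting that'' line.
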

\begin{proof}
By combining \cref{cor:h-f} and \cref{cor:h-g}, we have
\begin{align*}
&\sum_{\substack{A\subset[2s]^1_{\bc} \\ A: \text{even-odd}}}\sum_{\substack{B\subset[2s]^{>1}_{\bc} \\ \{2r,2r+1\}\not\subset B \ (r\in[s-1])}}\frac{(-1)^{\#B}}{N^{\#A+\#B}}g_N(\bc_{(-A)}-\bdelta_B)\\
&=\sum_{\substack{A\subset[2s]^1_{\bc} \\ A: \text{odd-even}}}\sum_{\substack{B\subset[2s]^{>1}_{\bc} \\ \{2r-1,2r\}\not\subset B \ (r\in[s])}}\frac{(-1)^{\#B}}{N^{\#A+\#B}}f_N(\bc_{(-A)}-\bdelta_B).
\end{align*}
Noting that
\[
\sum_{\substack{B\subset[2s]^{>1}_{\bc}, \ \#B=1 \\ \{2r,2r+1\}\not\subset B \ (r\in[s-1])}}\frac{(-1)^{\#B}}{N^{\#B}}f_N(\bc-\bdelta_B)=\sum_{\substack{B\subset[2s]^{>1}_{\bc}, \#B=1 \\ \{2r-1,2r\}\not\subset B \ (r\in[s])}}\frac{(-1)^{\#B}}{N^{\#B}}f_N(\bc-\bdelta_B),
\]
we obtain the formula from the above equation.
\end{proof}
\begin{proof}[Proof of $\cref{thm:dia-main2}$]
Note that $\zeta^{\dia}_N(\bk)=\zeta^{}_N(\bk)$ holds for each $\bk\in\II^{\geq 2}_k$, and that
\[
\sum_{n=1}^{N-1}\frac{1}{n^a}\zeta^{}_{n}(k_1,\dots, k_r)=\zeta^{}_N(k_1,\dots, k_r, a)
\]
and
\[
\sum_{n=1}^{N-1}\frac{1}{n^a}\bigl(\zeta^{}_{n+1}(k_1,\dots, k_r)-\zeta^{}_{n}(k_1,\dots, k_r)\bigr)=\zeta^{}_N(k_1,\dots, k_{r-1}, k_r+a)
\]
hold for multiple harmonic sums.
Then, by induction on the weight of $\bc$, it suffices to take the sum from $n=1$ to $N-1$ of the difference relations given in \cref{thm:difference-eq}.
\end{proof}

\section{\texorpdfstring{Kawashima's relations for $Z^{\dia}$}{Kawashima's relations for dia-values}}\label{sec:Kawashima}
Throughout this section, we fix a positive integer $N$ and an indeterminate $t$.
\subsection{Kawashima-like function}
\begin{definition}
For an index $\bk=(k_1,\dots, k_r)$, we define $F_N(\bk;t)$ as
\[
F_N(\bk;t)\coloneqq\sum_{A\subset [r]}(-1)^{\#A}\sum_{(n_1,\dots, n_r)\in \overline{S}_{r,N}(A)}\Biggl(\prod_{i\in A}\frac{1}{(N-n_i+t)^{k_i}}\Biggr)\Biggl(\prod_{i\in[r]\setminus A}\frac{1}{n_i^{k_i}}\Biggr),
\]
where
\[
\overline{S}_{r,N}(A) \coloneqq \left\{(n_1,\dots,n_r) \in [N-1]^r \ \middle| \ \begin{array}{cc} 
n_{i-1}\leq n_{i}  & \text{ if } 1<i\in [r]\setminus A, \\ 
n_{i-1} < n_{i} & \text{ if } 1<i\in A
\end{array} \right\}
.
\]
\end{definition}
The star-harmonic product $\overline{*}$ on $\cH^1$ is defined by $w \, \overline{*} \, 1=1 \, \overline{*} \, w=w$ for any word $w\in\cH^1$ and $w_1e_{k_1} \, \overline{*} \, w_2e_{k_2}=(w_1 \, \overline{*} \, w_2e_{k_2})e_{k_1}+(w_1e_{k_1} \, \overline{*} \, w_2)e_{k_2}-(w_1 \, \overline{*} \, w_2)e_{k_1+k_2}$ for any words $w_1$, $w_2\in\cH^1$ and any positive integers $k_1$, $k_2$ with $\QQ$-bilinearity.
Let $\cF_N\colon\cH^1\to\QQ(t)$ be the $\QQ$-linear mapping defined by $\cF_N(e_{k_1}\cdots e_{k_r})=F_N(\bk;t)$ for each index $\bk=(k_1,\dots, k_r)$ and $\cF_N(1)=1$.
\begin{proposition}\label{prop:F-harmonic}
For any $w_1, w_2\in\cH^1$, we have
\[
\cF_N(w_1;t)\cF_N(w_2;t)=\cF_N(w_1 \, \overline{*} \, w_2;t).
\]
\end{proposition}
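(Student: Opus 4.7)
The plan is to proceed by induction on $\dep(w_1) + \dep(w_2)$; the base case ($w_1 = 1$ or $w_2 = 1$) is immediate from $\cF_N(1;t) = 1$ and the recursive definition of $\, \overline{*} \,$. For the inductive step, write $w_1 = u e_a$ and $w_2 = v e_b$ with $u, v \in \cH^1$ and $a, b \in \NN$, and unfold
\[
w_1 \, \overline{*} \, w_2 = (u \, \overline{*} \, w_2) e_a + (w_1 \, \overline{*} \, v) e_b - (u \, \overline{*} \, v) e_{a+b},
\]
so that the task reduces to matching the right-hand side, after applying $\cF_N$, with the product $\cF_N(w_1;t)\cF_N(w_2;t)$. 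The approach is to isolate the two final variables $M$ and $M'$ of $F_N(w_1;t)$ and $F_N(w_2;t)$, together with their signs, corresponding to the two possible final-position summands $\frac{1}{M^a}$ and $-\frac{1}{(N - M + t)^a}$ in the defining sum of $F_N$. This requires introducing truncated companions $F^{\leq M}_N(\bj; t)$ and $F^{<M}_N(\bj; t)$ of $F_N(\bj; t)$, obtained by restricting the last variable of $\bj$ to satisfy $\leq M$ or $< M$, respectively; the choice of sign at the next position determines whether the inequality against the preceding variable is weak or strict, and these two truncations account precisely for the two alternatives.

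With the truncations in hand, $\cF_N(w_1;t)\cF_N(w_2;t)$ becomes a double sum over $(M, M') \in [N-1]^2$, which I would split into the three regions $M > M'$, $M < M'$, and $M = M'$. In the first two regions, the larger variable is factored out and the inductive hypothesis, applied to the truncated product involving $u \cdot w_2$ (respectively $w_1 \cdot v$), reassembles the region into $\cF_N((u \, \overline{*} \, w_2) e_a; t)$ and $\cF_N((w_1 \, \overline{*} \, v) e_b; t)$. The diagonal region $M = M'$ is analyzed by the four sign combinations at the coinciding final positions: the $(+,+)$ and $(-,-)$ pairs collapse into single weights $\frac{1}{M^{a+b}}$ and $\frac{1}{(N - M + t)^{a+b}}$, which by the inductive hypothesis applied to $u$ and $v$ combine with an overall $-1$ to reproduce $-\cF_N((u \, \overline{*} \, v) e_{a+b}; t)$, while the $(+,-)$ and $(-,+)$ mixed pairings do not collapse into a single position but instead exactly patch the gap between the $\leq$ and $<$ truncations left over from the off-diagonal regions.

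The principal obstacle will be this bookkeeping: tracking the weak-versus-strict inequality at the interface between the truncations and reconciling the four sign combinations on the diagonal with the three terms on the right. The negative sign attached to $e_{a+b}$ in the definition of $\, \overline{*} \,$ is exactly what compensates the double-counting of the diagonal $M = M'$ under the $(+,+)$ and $(-,-)$ choices, so once the truncated $F_N$-values are in place and their role in distinguishing weak from strict inequalities is understood, the identity should fall out by straightforward reassembly.
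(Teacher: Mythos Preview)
Your inductive approach is sound and will work, but it is genuinely different from the paper's proof. The paper does not induct on depth at all: it expands $F_N(\bk;t)F_N(\bl;t)$ directly, organizes the merged sequences $(n_1,\dots,n_r)$ and $(n_{r+1},\dots,n_{r+s})$ via the surjections $\sigma\in\Sigma_{r,s,d}$ (the same combinatorial device used earlier in \cref{prop:dia-harmonic}), checks that the signs $(-1)^{\#A+\#B}$ and the weak/strict merging rules of $\overline{S}_{r,N}(A)$ fit together to produce the global sign $(-1)^d$ attached to depth-$d$ contractions, and reads off the star-stuffle expansion in one pass.

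Your route trades this single global combinatorial step for a recursion that peels off the last letters $e_a$, $e_b$.  One point you leave implicit is worth making explicit: to apply the inductive hypothesis to products like $F_N^{\leq M}(u;t)\,F_N^{\leq M}(w_2;t)$ you need to know that the truncations themselves satisfy the star-stuffle identity.  This is immediate once you observe that $F_N^{\leq M}(\,\cdot\,;t)=F_{M+1}(\,\cdot\,;t+N-M-1)$ and $F_N^{<M}(\,\cdot\,;t)=F_M(\,\cdot\,;t+N-M)$, so the truncations are genuine instances of $\cF_{N'}$ with a shifted parameter and the hypothesis applies directly.  With that in hand your diagonal bookkeeping is exactly right: the $(+,+)$ and $(-,-)$ pairings give the $e_{a+b}$ term with the correct minus sign, and the $(+,-)$, $(-,+)$ pairings cancel against the overcount coming from the fact that the two off-diagonal pieces naturally carry the ranges $M'\le M$, $M'<M$ (resp.\ $M\le M'$, $M<M'$) rather than strict $M'>M$, $M'<M$.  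The paper's approach avoids introducing truncations entirely; your approach avoids the combinatorics of $\Sigma_{r,s,d}$ and the somewhat delicate sign $(-1)^{d-\#\{j:\#\sigma^{-1}(j)=2\}}$ that appears in the paper's computation.
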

\begin{proof}
Let $\bk=(k_1,\dots, k_r)$ and $\bl=(k_{r+1},\dots, k_{r+s})$ be indices.
Recall the definition of the set $\Sigma_{r,s,d}$ for each $d\in[\min\{r,s\}]_0$ and the notation $(k_1^{\sigma},\dots, k_{r+s-d}^{\sigma})$ for $\sigma \in \Sigma_{r,s,d}$.
\begin{align*}
&F_N(\bk;t)F_N(\bl;t)\\
&=\Biggl(\sum_{A\subset[r]}(-1)^{\#A}\sum_{(n_1,\dots, n_r)\in\overline{S}_{r,N}(A)}\Biggl(\prod_{i\in A}\frac{1}{(N-n_i+t)^{k_i}}\Biggr)\Biggl(\prod_{i\in[r]\setminus A}\frac{1}{n_i^{k_i}}\Biggr)\Biggr)\\
&\quad\times\Biggl(\sum_{B\subset[r+s]\setminus[r]}(-1)^{\#B}\sum_{(n_{r+1},\dots, n_{r+s})\in\overline{S}_{s,N}(B-r)}\Biggl(\prod_{i\in B}\frac{1}{(N-n_i+t)^{k_i}}\Biggr)\Biggl(\prod_{i\in[r+s]\setminus ([r]\cup B)}\frac{1}{n_i^{k_i}}\Biggr)\Biggr)\\
&=\sum_{A\subset[r]}\sum_{B\subset[r+s]\setminus[r]}(-1)^{\#A+\#B}\sum_{d=0}^{\min\{r,s\}}(-1)^{d-\#\{j\in\sigma(A\cup B) \mid\#\sigma^{-1}(\{j\})=2\}}\\
&\quad\sum_{\substack{\sigma\in\Sigma_{r,s,d} \\ \sigma^{-1}(\{j\})\subset A\cup B \text{ or } \sigma^{-1}(\{j\})\subset[r+s]\setminus (A\cup B) \ (j\in [r+s-d])}}\\
&\quad\sum_{(m_1,\dots, m_{r+s-d})\in\overline{S}_{r+s-d}(\sigma(A\cup B))}\Biggl(\prod_{j\in \sigma(A\cup B)}\frac{1}{(N-m_j+t)^{k_j^{\sigma}}}\Biggr)\Biggl(\prod_{j\in[r+s-d]\setminus \sigma(A\cup B)}\frac{1}{m_j^{k_j^{\sigma}}}\Biggr)\\
&=\sum_{d=0}^{\min\{r,s\}}(-1)^d\sum_{\sigma\in\Sigma_{r,s,d}}\sum_{U\subset [r+s-d]}(-1)^{\#U}\\
&\quad\sum_{(m_1,\dots, m_{r+s-d})\in\overline{S}_{r+s-d}(U)}\Biggl(\prod_{j\in U}\frac{1}{(N-m_j+t)^{k_j^{\sigma}}}\Biggr)\Biggl(\prod_{j\in[r+s-d]\setminus U}\frac{1}{m_j^{k_j^{\sigma}}}\Biggr)\\
&=\sum_{d=0}^{\min\{r,s\}}(-1)^d\sum_{\sigma\in\Sigma_{r,s,d}}F_N(k_1^{\sigma},\dots, k_{r+s-d}^{\sigma};t).
\end{align*}
Here, $B-r\coloneqq\{i-r \mid i\in B\}$.
This proves the desired statement.
\end{proof}
\begin{definition}
For an index $\bk=(k_1,\dots, k_r)$, we define $G_N(\bk;t)$ as
\begin{align*}
G_N(\bk;t)&\coloneqq\sum_{A\subset[r]_{\bk}^1\setminus\{r\}}\sum_{(n_1,\dots,n_r)\in S^{\star}_{r,N}(A)}\Biggl(\prod_{i\in A}\frac{1}{N-n_i+t}\Biggr)\Biggl(\prod_{i\in[r]\setminus A}\frac{1}{n_i^{k_i}}\Biggr)\\
&\qquad\times (-1)^{n_r-1}\binom{t}{n_r}\frac{(1-N)_{n_r}}{(1-N-t)_{n_r}}.
\end{align*}
Here, recall the definition of $S_{r,N}^{\star}(A)$ in \cref{prop:star-expression}.
The notation $(a)_n$ is the rising factorial, that is, $(a)_n=a(a+1)\cdots (a+n-1)$.
The binomial coefficient $\binom{t}{n}$ is the usual one, that is, $\binom{t}{n}=\frac{t(t-1)\cdots (t-n+1)}{n!}$.
\end{definition}
Let $Z^{\dia}_N\colon\cH^0\to\QQ$ be the $\QQ$-linear mapping defined by $Z^{\dia}_N(e_{k_1}\cdots e_{k_r})=\zeta^{\dia}_N(\bk)$ for each admissible index $\bk=(k_1,\dots, k_r)$ and $Z^{\dia}_N(1)=1$.
\begin{proposition}\label{prop:Taylor-G}
For an index $\bk=(k_1,\dots, k_r)$, we have
\[
G_N(\bk;t)=\sum_{m=1}^{\infty}(-1)^{m-1}Z^{\dia}_N(S(e_{k_1}\cdots e_{k_r})\circledast y^m)
t^m.
\]
Here, $S(e_{k_1}\cdots e_{k_r})\coloneqq\sum_{(l_1,\dots, l_s)\preceq\bk}e_{l_1}\cdots e_{l_s}$.
\end{proposition}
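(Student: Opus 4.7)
The plan is to expand $G_N(\bk;t)$ as a formal power series in $t$ and identify each Taylor coefficient directly with the corresponding term on the right-hand side. Using the factorizations $(-1)^{n-1}\binom{t}{n}=\frac{t}{n}\prod_{k=1}^{n-1}(1-\frac{t}{k})$ and $\frac{(1-N)_n}{(1-N-t)_n}=\prod_{j=1}^n\frac{N-j}{N-j+t}$, the tail factor of $G_N(\bk;t)$ rewrites as
\[
\frac{t}{n_r}\prod_{k=1}^{n_r-1}\left(1-\frac{t}{k}\right)\prod_{j=1}^{n_r}\left(1+\frac{t}{N-j}\right)^{-1}.
\]
Combined with the geometric-series expansions $\frac{1}{N-n_i+t}=\sum_{\alpha\geq 0}\frac{(-t)^\alpha}{(N-n_i)^{\alpha+1}}$ for each $i\in A$, the coefficient of $t^m$ in $G_N(\bk;t)$ becomes a finite sum with overall sign $(-1)^{m-1}$, parametrized by tuples $(A,(n_i)_{i\in[r]},(\alpha_i)_{i\in A},K,(\mu_1\leq\cdots\leq\mu_p))$ with $K\subset\{1,\ldots,n_r-1\}$, $\mu_j\in\{N-n_r,\ldots,N-1\}$, and $\sum_{i\in A}\alpha_i+|K|+p=m-1$.

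For the right-hand side, by $\QQ$-bilinearity of $\circledast$ and its definition,
\[
Z^{\dia}_N(S(\bk)\circledast y^m)=\sum_{(l_1,\ldots,l_s)\preceq\bk}Z^{\dia}_N\bigl((e_{l_1}\cdots e_{l_{s-1}}\ast y^{m-1})e_{l_s+1}\bigr).
\]
Unfolding the harmonic product $\ast$ as a sum over interleavings of $(l_1,\ldots,l_{s-1})$ with $m-1$ copies of $e_1$ (allowing adjacent pairs from different sequences to merge), and then expanding each $\zeta^{\dia}_N$ via \cref{def:dia} as a sum over subsets marking $\frac{1}{N-n}$-slots, yields another explicit multi-parameter sum. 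The intended bijection dictionary is: the subset $A\subset[r]^1_{\bk}\setminus\{r\}$ together with the star inequalities on $(n_i)$ assembles, via \cref{prop:star-expression}, into the merging $(l_1,\ldots,l_s)\preceq\bk$; each $\alpha_i$ records additional $1$-entries from $y^{m-1}$ stacked at value $n_i$ in weak-inequality $\frac{1}{N-n_i}$-slots; the subset $K$ records $1$-entries placed strictly before $n_r$ contributing $\frac{1}{k}$-factors (non-$A$, strict-inequality slots); and the multiset $(\mu_j)$, via the substitution $n_j'=N-\mu_j\in\{1,\ldots,n_r\}$, records $1$-entries placed around $n_r$ contributing $\frac{1}{N-n_j'}$-factors ($A$-type, weak-inequality slots).

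The hard part will be verifying this bijection in full detail: one must check that the range and monotonicity constraints on $K$, $(\alpha_i)$, and $(\mu_j)$ reproduce exactly the alternating strict/weak inequalities in each $S_{r',N}(A')$ appearing on the right, that the distinct-subset structure of $K$ versus the multiset structure of $(\mu_j)$ and the stacking by $(\alpha_i)$ correctly track how the harmonic product $\ast$ distinguishes concatenation from merging, and that the signs aggregate to the common factor $(-1)^{m-1}$ on both sides. This is careful combinatorial bookkeeping of the Seki--Yamamoto connected-sum flavor rather than a conceptual obstacle; once it is completed, term-by-term matching of the two multi-parameter sums proves the proposition.
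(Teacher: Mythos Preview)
Your approach differs from the paper's. Both begin with the star-to-non-star reduction via \cref{prop:star-expression}, but for the resulting non-star identity the paper does not attempt a direct bijection. Instead it introduces auxiliary parametrized values $\zeta^{\dia}_{n,N}(\bk;z)$ and proves, by induction on depth (\cref{lem:induction-expansion}), the generating-function identity
\[
\zeta^{\dia}_N(\bk;z-t)\,\frac{(1+t)_{N-1}}{(N-1)!}\,\frac{(1-z)_N}{(1-z+t)_N}
=\sum_{m\ge 0}Z^{\dia}_{N,z}(e_{k_1}\cdots e_{k_r}\ast y^{m})\,t^{m},
\]
the base case being an elementary product expansion for $\{1\}^p$ (\cref{lem:1...1-expansion}). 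Specializing and summing against $\frac{-t}{n_r^{k_r+1}}$ then gives the proposition. The induction mirrors the recursive definition of $\ast$, so the harmonic product is absorbed one $y$ at a time rather than handled combinatorially all at once.

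Your direct expansion is plausibly correct---it is essentially this induction unwound---but the dictionary you give is underspecified in two places. First, you do not explain how the \emph{positional} quasi-shuffle $\sigma$ (which records where each of the $m-1$ ones sits relative to the $k_i$'s and which ones merge) is reconstructed from the purely value-based, unordered data $(K,(\alpha_i)_{i\in A},(\mu_j))$; the inequality pattern of $S_{r',N}(A')$ depends on the position data, not just the multiset of values. Second, the merging in $\ast$ is not addressed: an element $k\in K$ that coincides with some $n_i$ must correspond on the right to a merge $k_i\mapsto k_i+1$ (an unmerged $1$ at value $n_i$ in a non-$A'$ slot would violate a strict inequality), and this case analysis has to be carried out. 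These are not fatal, but they are exactly the ``careful bookkeeping'' you defer, and the paper's inductive route avoids them entirely.
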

To prove this proposition, we first prepare two lemmas.
\begin{definition}
Let $z$ be an indeterminate and let $n\in[N-1]_0$.
Let $\bk=(k_1,\dots, k_r)$ be an index.
We define $\zeta^{\dia}_{n,N}(\bk;z)$ as
\[
\zeta^{\dia}_{n,N}(\bk;z)\coloneqq\sum_{A\subset[r]^1_{\bk}}\sum_{\substack{(n_1,\dots, n_{r+1})\in([N]\setminus[n])^{r+1} \\ n_i\leq n_{i+1} \ (i\in A) \\ n_i<n_{i+1} \ (i\in[r]\setminus A) \\ n_{r+1}=N}}\Biggl(\prod_{i\in A}\frac{1}{z-n_i}\Biggr)\Biggl(\prod_
{i\in[r]\setminus A}\frac{1}{n_i^{k_i}}\Biggr).
\]
Set $\zeta^{\dia}_N(\bk;z)\coloneqq\zeta_{0,N}^{\dia}(\bk;z)$.
Let $Z^{\dia}_{N,z}\colon\cH^1\to\QQ(z)$ be the $\QQ$-linear mapping defined by $Z^{\dia}_{N,z}(1)=1$ and $Z^{\dia}_{N,z}(e_{k_1}\cdots e_{k_r})=\zeta^{\dia}_N(\bk;z)$ for each index $\bk=(k_1,\dots, k_r)$.
\end{definition}
\begin{lemma}\label{lem:1...1-expansion}
For $n\in[N-1]_0$, we have
\[
\prod_{n<j<N}\left(1+\frac{t}{j}\right)\prod_{n<j\leq N}\left(1-\frac{t}{z-j}\right)^{-1}=\sum_{p=0}^{\infty}\zeta_{n,N}^{\dia}(\{1\}^p;z)t^p,
\]
where $\zeta_{n,N}^{\dia}(\{1\}^0;z)\coloneqq 1$.
\end{lemma}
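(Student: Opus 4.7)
The plan is to prove the identity by downward induction on $n \in \{0,1,\dots,N-1\}$, comparing the two sides coefficient by coefficient as formal power series in $t$. Write $L_n(t)$ for the left-hand side. The key structural observation is that going from $n$ to $n-1$ simply appends the two factors at $j=n$, so (for $1 \le n \le N-1$)
\[
L_{n-1}(t) \;=\; \Bigl(1+\tfrac{t}{n}\Bigr)\Bigl(1-\tfrac{t}{z-n}\Bigr)^{-1} L_n(t).
\]
Thus it suffices to match the recursion
\begin{equation}\label{eq:rec-plan}
\zeta^{\dia}_{n-1,N}(\{1\}^p;z) \;=\; \sum_{b+c=p}\frac{\zeta^{\dia}_{n,N}(\{1\}^c;z)}{(z-n)^b} \;+\; \sum_{\substack{b+c=p\\ b\ge 1}}\frac{\zeta^{\dia}_{n,N}(\{1\}^c;z)}{n(z-n)^{b-1}}
\end{equation}
on the right-hand side, since \eqref{eq:rec-plan} is exactly the $t^p$-coefficient of $(1+t/n)(1-t/(z-n))^{-1}L_n(t)$.

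The base case $n=N-1$ is easy: the first product is empty, leaving $(1-t/(z-N))^{-1}=\sum_p t^p/(z-N)^p$, while on the right-hand side the constraint $n_i\in[N]\setminus[N-1]=\{N\}$ forces $n_1=\cdots=n_{p+1}=N$, and the equalities $n_i=n_{i+1}$ then force $A=[p]$, yielding the single term $1/(z-N)^p$.

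For the inductive step I would prove \eqref{eq:rec-plan} by a combinatorial splitting of the sum defining $\zeta^{\dia}_{n-1,N}(\{1\}^p;z)$ according to
\[
i_0 \;\coloneqq\; \#\{i\in[p]:n_i=n\},
\]
the length of the initial block of values equal to $n$. If $i_0=0$, every $n_i$ lies in $[N]\setminus[n]$ and the contribution is exactly $\zeta^{\dia}_{n,N}(\{1\}^p;z)$, matching the $b=0$ term. If $i_0\ge 1$, the equalities $n_1=\cdots=n_{i_0}=n$ force $1,\dots,i_0-1\in A$; the transition index $i_0$ (where $n_{i_0}=n<n_{i_0+1}$ or $i_0=p$ with $n_{p+1}=N$) is free, contributing either weight $1/(z-n)$ if $i_0\in A$ (giving the $b=i_0$ piece of the first sum) or weight $1/n$ if $i_0\notin A$ (giving the $b=i_0-1$ piece of the second sum). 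The tail $n_{i_0+1}\le\cdots\le n_{p+1}=N$ with the restriction of $A$ is precisely a summand of $\zeta^{\dia}_{n,N}(\{1\}^{p-i_0};z)$. Summing over $i_0$ and the two choices at the transition produces exactly \eqref{eq:rec-plan}.

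The only delicate point is bookkeeping around the transition index $i_0$: one must verify that the forced membership of $1,\dots,i_0-1$ in $A$ holds for every $i_0\ge 1$ (including the edge case $i_0=p$, which is fine because $n_{p+1}=N>n$), and that the induced weights at positions $i_0+1,\dots,p$ match the definition of $\zeta^{\dia}_{n,N}(\{1\}^{p-i_0};z)$ with no residual constraint between position $i_0$ and $i_0+1$. Once this splitting is set up carefully, both steps of the induction are routine, and the identity follows.
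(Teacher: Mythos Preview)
Your proof is correct, and it takes a genuinely different route from the paper's. The paper does not induct on $n$; instead it expands each product globally,
\[
\prod_{n<j<N}\Bigl(1+\tfrac{t}{j}\Bigr)=\sum_{k\ge0}\ \sum_{n<a_1<\cdots<a_k<N}\frac{t^k}{a_1\cdots a_k},\qquad
\prod_{n<j\le N}\Bigl(1-\tfrac{t}{z-j}\Bigr)^{-1}=\sum_{l\ge0}\ \sum_{n<b_1\le\cdots\le b_l\le N}\frac{t^l}{(z-b_1)\cdots(z-b_l)},
\]
multiplies, and then identifies the $t^p$-coefficient with $\zeta^{\dia}_{n,N}(\{1\}^p;z)$ in one step. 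The implicit combinatorial point is that a pair $(\mathbf a,\mathbf b)$ (with $\mathbf a$ strictly increasing in $(n,N)$ and $\mathbf b$ weakly increasing in $(n,N]$) corresponds bijectively to a pair $(A,(n_1,\dots,n_p))$ in the definition of $\zeta^{\dia}_{n,N}(\{1\}^p;z)$: merge the two sequences into a single non-decreasing one, marking the $b$-positions by $A$; the constraints ``$n_i<n_{i+1}$ for $i\notin A$, $n_i\le n_{i+1}$ for $i\in A$'' are precisely what makes the merge (with the tiebreak ``$b$ before $a$'') a bijection.

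So the paper performs a single global shuffle-type identification, while you peel off one value $j=n$ at a time and argue by a local case analysis on the length of the initial block equal to $n$. Your approach is slightly longer but entirely mechanical and self-contained, whereas the paper's one-line argument is shorter but leaves the merge bijection to the reader. Both establish the lemma without difficulty.
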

\begin{proof}
This follows from expansions
\[
\prod_{n<j<N}\left(1+\frac{t}{j}\right)=\sum_{k=0}^{N-n-1}\sum_{n<a_1<\cdots<a_k<N}\frac{t^k}{a_1\cdots a_k}=\sum_{k=0}^{\infty}\sum_{n<a_1<\cdots<a_k<N}\frac{t^k}{a_1\cdots a_k}
\]
and
\[
\prod_{n<j\leq N}\left(1-\frac{t}{z-j}\right)^{-1}=\prod_{n<j\leq N}\sum_{l_j=0}^{\infty}\frac{t^{l_j}}{(z-j)^{l_j}}=\sum_{l=0}^{\infty}\sum_{n<b_1\leq \cdots \leq b_l\leq N}\frac{t^l}{(z-b_1)\cdots (z-b_l)},
\]
together with
\[
\sum_{k+l=p}\Biggl(\sum_{n<a_1<\cdots<a_k<N}\frac{1}{a_1\cdots a_k}\Biggr)\Biggl(\sum_{n<b_1\leq \cdots \leq b_l\leq N}\frac{1}{(z-b_1)\cdots (z-b_l)}\Biggr)=\zeta_{n,N}^{\dia}(\{1\}^p;z).\qedhere
\]
\end{proof}
\begin{lemma}\label{lem:induction-expansion}
For an index $\bk=(k_1,\dots, k_r)$, we have
\[
\zeta_N^{\dia}(\bk;z-t)\frac{(1+t)_{N-1}}{(N-1)!}\frac{(1-z)_N}{(1-z+t)_N}=\sum_{m=0}^{\infty}Z^{\dia}_{N,z}(e_{k_1}\cdots e_{k_r}\ast y^m)t^m.
\]
\end{lemma}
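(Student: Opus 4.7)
The plan is to prove the identity by induction on the depth $r$ of $\bk$, after first strengthening the statement to include a lower truncation parameter $n\in[N-1]_0$:
\[
\zeta^{\dia}_{n,N}(\bk;z-t)\prod_{n<j<N}\!\left(1+\tfrac{t}{j}\right)\prod_{n<j\le N}\!\left(1-\tfrac{t}{z-j}\right)^{-1}=\sum_{m=0}^{\infty}\zeta^{\dia}_{n,N}(e_{k_1}\cdots e_{k_r}\ast y^m;z)\,t^m,
\]
the right-hand side defined by $\QQ$-linear extension. The stated lemma is the $n=0$ case, upon rewriting $(1+t)_{N-1}/(N-1)!=\prod_{j=1}^{N-1}(1+t/j)$ and $(1-z)_N/(1-z+t)_N=\prod_{j=1}^{N}(1-t/(z-j))^{-1}$. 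The base case $r=0$ is exactly \cref{lem:1...1-expansion}, since $\zeta^{\dia}_{n,N}(\varnothing;z-t)=1$ and $1\ast y^m=y^m$.

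For the inductive step, I would first rewrite the right-hand side recursively via the leftmost-letter expansion of the harmonic product, $(e_{k_1}\bk')\ast y^m=e_{k_1}(\bk'\ast y^m)+y(\bk\ast y^{m-1})+e_{k_1+1}(\bk'\ast y^{m-1})$ for $m\ge 1$. This yields the generating-function identity $\alpha(t)=(1-yt)^{-1}(e_{k_1}+t\,e_{k_1+1})\alpha'(t)$ with $\alpha(t)=\sum_m(\bk\ast y^m)t^m$ and $\alpha'(t)=\sum_m(\bk'\ast y^m)t^m$. On the left-hand side, I would decompose $\zeta^{\dia}_{n,N}(\bk;z-t)$ by isolating the first summation variable $n_1$: for $k_1\ge 2$, one gets $\zeta^{\dia}_{n,N}(\bk;z-t)=\sum_{n_1=n+1}^{N-1}n_1^{-k_1}\zeta^{\dia}_{n_1,N}(\bk';z-t)$; for $k_1=1$ one has an additional contribution $\sum_{n_1=n+1}^{N}\frac{\zeta^{\dia}_{n_1-1,N}(\bk';z-t)}{(z-t)-n_1}$ coming from $1\in A$. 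One then splits the Pochhammer prefactor at level $n_1$, applies the induction hypothesis to the upper block together with $\zeta^{\dia}_{n_1,N}(\bk';z-t)$ to produce the $\alpha'(t)$-factor, and uses \cref{lem:1...1-expansion} on the lower block to produce the $(1-yt)^{-1}$-factor.

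The main obstacle will be the combinatorial reconciliation of the bumping term $(e_{k_1}+t\,e_{k_1+1})$ on the right-hand side with the various contributions on the left. In the case $k_1\ge 2$, the boundary factor $(1+t/n_1)$ arising from the Pochhammer split at $j=n_1$, multiplied by $1/n_1^{k_1}$, gives precisely $1/n_1^{k_1}+t/n_1^{k_1+1}$, matching $e_{k_1}+t\,e_{k_1+1}$. In the case $k_1=1$, the bump must instead emerge from the geometric expansion $\frac{1}{(z-t)-n_1}=\sum_{\ell\ge 0}t^{\ell}/(z-n_1)^{\ell+1}$, whose $\ell\ge 1$ terms absorb into the boundary factor $(1-t/(z-n_1))^{-1}$ of the Pochhammer split. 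The subtle level-shift $\zeta^{\dia}_{n_1,N}\rightsquigarrow\zeta^{\dia}_{n_1-1,N}$ in the $1\in A$ decomposition (reflecting the $\le$ condition at the $A$-positions) must interface correctly with the range of the Pochhammer split, which is where the bulk of the bookkeeping sits. Once that matching is set up carefully, the identity $\alpha(t)=(1-yt)^{-1}(e_{k_1}+t\,e_{k_1+1})\alpha'(t)$ becomes a termwise consequence of the inductive hypothesis and \cref{lem:1...1-expansion}.
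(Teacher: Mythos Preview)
Your approach is correct and is the mirror image of the paper's. The paper extracts the \emph{rightmost} letter $e_{k_{r+1}}$ and uses the rightmost harmonic recursion
\[
we_{k_{r+1}}\ast y^m=\sum_{p+q=m}(w\ast y^q)e_{k_{r+1}}y^p+\sum_{\substack{p+q=m\\p\ge 1}}(w\ast y^q)e_{k_{r+1}+1}y^{p-1},
\]
so that the induction hypothesis applies to $\zeta^{\dia}_n(\bk;z-t)$ with \emph{upper} bound $n$ --- which is just the lemma itself with $N$ replaced by $n$, hence no strengthening is needed --- and \cref{lem:1...1-expansion} handles the tail block from $n$ to $N$. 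You instead extract the leftmost letter and invoke the induction hypothesis on the block from $n_1$ to $N$, which forces you to carry the extra lower-truncation parameter $n$ throughout. Both routes face identical bump-and-boundary bookkeeping: in the paper the split produces $(1+t/n)$ for $k_{r+1}\ge 2$, and for $k_{r+1}=1$ the factor $(1-t/(z-n))$ cancels against $\tfrac{1}{z-t-n}=\tfrac{1}{z-n}(1-t/(z-n))^{-1}$, exactly parallel to your plan. The paper's right-to-left orientation buys a cleaner statement (one fewer quantifier), while yours proves a slightly stronger identity; otherwise the two arguments are reflections of each other. One small correction: in your $k_1=1$ case the boundary factor from the Pochhammer split at level $n_1-1$ is $(1-t/(z-n_1))$, not its inverse --- it \emph{cancels} the $(1-t/(z-n_1))^{-1}$ from the geometric expansion rather than absorbing it.
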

\begin{proof}
We prove this by induction on $r=\dep(\bk)$.
The initial step corresponds to the case where $\bk$ is the empty index, that is, the case $n=0$ in \cref{lem:1...1-expansion}.
Assuming that the statement holds when the depth is $r$, we can compute as follows using the induction hypothesis and \cref{lem:1...1-expansion}.
When $k_{r+1}\geq 2$, we compute
\begin{align*}
&\zeta_N^{\dia}(k_1,\dots, k_r, k_{r+1};z-t)\frac{(1+t)_{N-1}}{(N-1)!}\frac{(1-z)_N}{(1-z+t)_N}\\
&=\sum_{n=1}^{N-1}\frac{1}{n^{k_{r+1}}}\zeta_n^{\dia}(\bk;z-t)\frac{(1+t)_{n-1}}{(n-1)!}\frac{(1-z)_n}{(1-z+t)_n}\left(1+\frac{t}{n}\right)\\
&\qquad\times\prod_{n<j<N}\left(1+\frac{t}{j}\right)\prod_{n<j\leq N}\left(1-\frac{t}{z-j}\right)^{-1}\\
&=\sum_{n=1}^{N-1}\frac{1}{n^{k_{r+1}}}\Biggl(\sum_{q=0}^{\infty}Z^{\dia}_{n,z}(e_{k_1}\cdots e_{k_r}\ast y^q)t^q\Biggr)\left(1+\frac{t}{n}\right)\Biggl(\sum_{p=0}^{\infty}\zeta_{n,N}^{\dia}(\{1\}^p;z)t^p\Biggr)\\
&=\sum_{p=0}^{\infty}\sum_{q=0}^{\infty}Z_{N,z}^{\dia}((e_{k_1}\cdots e_{k_r}\ast y^q)e_{k_{r+1}}y^p)t^{p+q}+\sum_{p=1}^{\infty}\sum_{q=0}^{\infty}Z_{N,z}^{\dia}((e_{k_1}\cdots e_{k_r}\ast y^q)e_{k_{r+1}+1}y^{p-1})t^{p+q}.
\end{align*}
When $k_{r+1}=1$, we compute
\begin{align*}
&\zeta_N^{\dia}(k_1,\dots, k_r, 1;z-t)\frac{(1+t)_{N-1}}{(N-1)!}\frac{(1-z)_N}{(1-z+t)_N}\\
&=\sum_{n=1}^{N-1}\frac{1}{n}\zeta_n^{\dia}(\bk;z-t)\frac{(1+t)_{n-1}}{(n-1)!}\frac{(1-z)_n}{(1-z+t)_n}\left(1+\frac{t}{n}\right)\\
&\qquad\times\prod_{n<j<N}\left(1+\frac{t}{j}\right)\prod_{n<j\leq N}\left(1-\frac{t}{z-j}\right)^{-1}\\
&\quad+\sum_{n=1}^N\frac{1}{z-t-n}\zeta_n^{\dia}(\bk;z-t)\frac{(1+t)_{n-1}}{(n-1)!}\frac{(1-z)_n}{(1-z+t)_n}\left(1-\frac{t}{z-n}\right)\\
&\qquad\times\prod_{n\leq j<N}\left(1+\frac{t}{j}\right)\prod_{n\leq j\leq N}\left(1-\frac{t}{z-j}\right)^{-1}\\
&=\sum_{n=1}^{N-1}\frac{1}{n}\Biggl(\sum_{q=0}^{\infty}Z_{n,z}^{\dia}(e_{k_1}\cdots e_{k_r}\ast y^q)t^q\Biggr)\left(1+\frac{t}{n}\right)\Biggl(\sum_{p=0}^{\infty}\zeta_{n,N}^{\dia}(\{1\}^p;z)t^p\Biggr)\\
&\quad +\sum_{n=1}^{N}\frac{1}{z-n}\Biggl(\sum_{q=0}^{\infty}Z_{n,z}^{\dia}(e_{k_1}\cdots e_{k_r}\ast y^q)t^q\Biggr)\Biggl(\sum_{p=0}^{\infty}\zeta_{n-1,N}^{\dia}(\{1\}^p;z)t^p\Biggr)\\
&=\sum_{p=0}^{\infty}\sum_{q=0}^{\infty}Z_{N,z}^{\dia}((e_{k_1}\cdots e_{k_r}\ast y^q)y^{p+1})t^{p+q}+\sum_{p=1}^{\infty}\sum_{q=0}^{\infty}Z_{N,z}^{\dia}((e_{k_1}\cdots e_{k_r}\ast y^q)e_2y^{p-1})t^{p+q}.
\end{align*}
Since
\[
e_{k_1}\cdots e_{k_r}e_{k_{r+1}}\ast y^m=\sum_{\substack{p+q=m \\ p\geq 0, q\geq 0}}(e_{k_1}\cdots e_{k_r}\ast y^q)e_{k_{r+1}}y^p+\sum_{\substack{p+q=m \\ p\geq 1, q\geq 0}}(e_{k_1}\cdots e_{k_r}\ast y^q)e_{k_{r+1}+1}y^{p-1},
\]
the proof is complete.
\end{proof}
\begin{proof}[Proof of $\cref{prop:Taylor-G}$]
By an argument similar to the proof of \cref{prop:star-expression}, it suffices to show that the following identity holds:
\begin{align*}
&\sum_{A\subset[r]_{\bk}^1\setminus\{r\}}\sum_{(n_1,\dots,n_r)\in S_{r,N}(A)}\Biggl(\prod_{i\in A}\frac{1}{N-n_i+t}\Biggr)\Biggl(\prod_{i\in[r]\setminus A}\frac{1}{n_i^{k_i}}\Biggr)(-1)^{n_r}\binom{t}{n_r}\frac{(1-N)_{n_r}}{(1-N-t)_{n_r}}\\
&=\sum_{m=1}^{\infty}Z^{\dia}_N(e_{k_1}\cdots e_{k_r}\circledast y^m)(-t)^m.
\end{align*}
Since the left-hand side equals
\[
\sum_{n_r=1}^{N-1}\frac{-t}{n_r^{k_r+1}}\zeta_{n_r}^{\dia}(k_1,\dots, k_{r-1};N+t)\frac{(1-t)_{n_r-1}}{(n_r-1)!}\frac{(1-N)_{n_r}}{(1-N-t)_{n_r}},
\]
we have the conclusion from \cref{lem:induction-expansion}.
\end{proof}
The following is the main result of this section.
For an index $\bk$, we denote its Hoffman dual by $\bk^{\vee}$; see \cite[Introduction]{SekiYamamoto2020} for the definition.
\begin{theorem}\label{thm:F=G}
For an index $\bk$, we have
\[
F_N(\bk;t)=G_N(\bk^{\vee};t).
\]
\end{theorem}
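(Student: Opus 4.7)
The plan is to apply the Seki--Yamamoto connected sum method. I will introduce a connected sum $\cZ_N(\bk;\bl;t)$ depending on a pair of indices $(\bk,\bl)$, the integer $N$, and the parameter $t$, designed so that the boundary case $\bl=\varnothing$ reproduces $F_N(\bk;t)$ while the boundary case $\bk=\varnothing$ reproduces $G_N(\bl;t)$. The connector should be modelled on the factor $(-1)^{n-1}\binom{t}{n}\frac{(1-N)_n}{(1-N-t)_n}$ that already appears in the definition of $G_N$, paired with $(N-n+t)^{-1}$-type factors chosen so that a summation over the connector's free variable produces the alternating $\sum_{A\subset[r]}(-1)^{\#A}$ structure of $F_N$. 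The hypergeometric identities required for the boundary matching are of the same nature as those extracted in \cref{lem:1...1-expansion} and \cref{lem:induction-expansion}.

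Once the connector is fixed, I will establish transport relations that move one letter from the rightmost end of $\bk$ onto the leftmost end of $\bl$. Two transport rules are needed, distinguished by whether the letter being removed equals $1$ or is at least $2$; these two cases correspond exactly to the two elementary moves generating Hoffman duality, and each is driven by an Abel-type partial-fraction identity such as $\frac{1}{n(z-n)}=\frac{1}{z}\bigl(\frac{1}{n}+\frac{1}{z-n}\bigr)$, suitably iterated. Applying the transports until $\bk$ is exhausted builds up $\bk^\vee$ on the right, yielding the chain of equalities $F_N(\bk;t)=\cZ_N(\bk;\varnothing;t)=\cZ_N(\varnothing;\bk^\vee;t)=G_N(\bk^\vee;t)$.

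The main obstacle I anticipate is that the restriction $A\subset[r]^1_\bk\setminus\{r\}$ in the definition of $G_N$ is structurally asymmetric: only positions carrying $1$ admit the $(N-n_i+t)^{-1}$ alternative on the $G$-side, while on the $F$-side every position does. The connector must encode this asymmetry, chiefly through the $\binom{t}{n_r}$ factor, which forces $n_r\ge 1$ and accounts for the vanishing of $G_N$ at $t=0$. Consequently, the transport rule for a letter equal to $1$ will have to be structurally different from the rule for a letter $\ge 2$, mirroring the way Hoffman duality swaps blocks of $1$s with blocks of entries $\ge 2$; the final position of $\bk$ singled out by the exclusion $\{r\}$ should moreover require separate handling at the first or last transport step. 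Should the connected-sum route prove unwieldy, a fall-back is to Taylor-expand $F_N(\bk;t)$ in $t$ using the star-harmonic multiplicativity of $\cF_N$ (\cref{prop:F-harmonic}) and compare coefficient-by-coefficient with the explicit expansion of $G_N(\bk^\vee;t)$ provided by \cref{prop:Taylor-G}.
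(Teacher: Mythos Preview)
Your proposal is correct and matches the paper's approach: the proof is indeed via the Seki--Yamamoto connected sum method, with two transport relations corresponding exactly to the two elementary moves generating Hoffman duality. The paper's connected sum $Z_{N,t}(\bk;\bl)$ happens to be oriented oppositely to yours (the $G$-side occupies the left slot and the $F$-side the right), and the boundaries are not literally at $\varnothing$ --- they are $Z_{N,t}(\bk_{\uparrow};\varnothing)=G_N(\bk;t)$ and $Z_{N,t}((1);\bk)=F_N(\bk;t)$ --- which is precisely the wrinkle you anticipated as requiring separate handling at the first or last transport step.
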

\begin{remark}
Taking the limit $t\to\infty$ in this equality yields Hoffman's identity (proved by Hoffman~\cite[Theorem~4.2]{Hoffman2015}; Kawashima also independently proved this in \cite{Kawashima2009}).
\end{remark}
\begin{proof}[Proof of $\cref{thm:dia-Kawashima}$ from $\cref{thm:F=G}$]
Rewrite \cref{prop:F-harmonic} as the equation for $G_N$ using \cref{thm:F=G}, apply \cref{prop:Taylor-G}, and compare coefficients of $t^m$.
(The expression obtained directly is of the same form as \cite[Corollary~3.2]{KanekoXuYamamoto2021}; to get the expression used in \cref{thm:dia-Kawashima} (which coincides with that of \cite[Corollary~5.4]{Kawashima2009}), one rewrites it as explained in the proof of \cite[Corollary~5.4]{Kawashima2009}.)
\end{proof}
\begin{remark}
Kawashima’s original proof of (algebraic) Kawashima's relations for multiple zeta values relies on the interpolation property of the so-called Kawashima function and on the analytic properties of Newton series.
For the Kawashima function, see Yamamoto's survey article~\cite{Yamamoto2019}.
On the other hand, Kaneko--Xu--Yamamoto~\cite{KanekoXuYamamoto2021} have given an alternative proof of Kawashima's relations based on the theory of regularizations of Hurwitz-type multiple zeta values.
Since taking $N\to\infty$ in \cref{thm:dia-Kawashima} recovers Kawashima's relations for multiple zeta values, one may regard our argument as yet another new proof.
Since the interpolation property of the Kawashima function is proved via Hoffman’s identity and our expression $G_N(\bk^{\vee};t)$ is an analogue of the Newton series expression of the Kawashima function, the flavor of our argument closely resembles Kawashima’s original proof.
The difference lies in whether one interpolates multiple star harmonic sums or generalizes them directly.
Unlike Kawashima’s analytic approach, ours requires no analytic arguments and involves only finite sums.
\end{remark}
\subsection{\texorpdfstring{Proof of $\cref{thm:F=G}$}{Proof of F=G}}
We prove \cref{thm:F=G} using the connected sum method introduced by Seki--Yamamoto~\cite{SekiYamamoto2019}.
See \cite{Seki2020} for the outline of the proof based on this method and terminology.
\begin{definition}[Connectors]
For positive integers $n, m<N$, we define \emph{connectors} $C_{N,t}(n,m)$, $C_{N,t}(N-n+t,m)$, $C_{N,t}(n,N-m+t)$, and $C_{N,t}(N-n+t,N-m+t)$ by
\begin{align*}
C_{N,t}(n,m)&\coloneqq(-1)^{n-1}n\binom{m}{n}\frac{(1-N-t)_{n+m}}{(1-N-t)_n(1-N-t)_m},\\
C_{N,t}(N-n+t,m)&\coloneqq\frac{N-n+t}{N-n-m+t}C_{N,t}(n,m),\\
C_{N,t}(n,N-m+t)&\coloneqq C_{N,t}(n,m-1),\\
C_{N,t}(N-n+t,N-m+t)&\coloneqq \frac{(N-n+t)(N-m+t)}{m(N-n-m+t)}C_{N,t}(n,m).
\end{align*}
When $m=1$, $C_{N,t}(n,N-1+t)=C_{N,t}(n,0)\coloneqq0$.
\end{definition}
For an index $\bk=(k_1,\dots, k_r)$, we define $\bk_{\to}$, ${}_{\leftarrow}\bk$, $\bk_{\uparrow}$, and ${}_{\uparrow}\bk$ by
\[
\begin{aligned}
\bk_{\to} &\coloneqq (k_1, \dots, k_r, 1), 
\quad &{}_{\leftarrow}\bk &\coloneqq (1, k_1, \dots, k_r), \\
\bk_{\uparrow} &\coloneqq (k_1, \dots, k_{r-1}, k_r+1),
\quad &{}_{\uparrow}\bk &\coloneqq (k_1+1, k_2, \dots, k_r),
\end{aligned}
\]
respectively.
\begin{definition}[Connected sum]
For indices $\bk=(k_1,\dots, k_r)$ and $\bl=(l_1,\dots, l_s)$, we define the \emph{connected sums} $Z_{N,t}(\bk;\bl)$ and $Z_{N,t}(\bk_{\uparrow};\varnothing)$ by
\begin{align*}
&Z_{N,t}(\bk;\bl)\\
&\coloneqq\sum_{A\subset[r]_{\bk}^1}\sum_{B\subset[s]}\sum_{(n_1,\dots, n_r)\in S_{r,N}^{\star}(A)}\sum_{(m_1,\dots, m_s)\in\overline{S}_{s,N}(B)}\\
&\qquad(-1)^{\#B}C_{N,t}\bigl((1-\delta_{r\in A})n_r+\delta_{r\in A}(N-n_r+t),(1-\delta_{s\in B})m_1+\delta_{s\in B}(N-m_1+t)\bigr)\\
&\qquad\times\Biggl(\prod_{i\in A}\frac{1}{N-n_i+t}\Biggr)\Biggl(\prod_{i\in[r]\setminus A}\frac{1}{n_i^{k_i}}\Biggr)\Biggl(\prod_{j\in B}\frac{1}{(N-m_j+t)^{l_j}}\Biggr)\Biggl(\prod_{j\in[s]\setminus B}\frac{1}{m_j^{l_j}}\Biggr)
\end{align*}
and
\[
Z_{N,t}(\bk_{\uparrow};\varnothing)\coloneqq\sum_{A\subset[r]^1_{\bk_{\uparrow}}}\sum_{(n_1,\dots, n_r)\in S^{\star}_{r,N}(A)}C_{N,t}(n_r,N-1)\Biggl(\prod_{i\in A}\frac{1}{N-n_i+t}\Biggr)\Biggl(\prod_{i\in[r]\setminus A}\frac{1}{n_i^{k_i}}\Biggr)\frac{1}{n_r}.
\]
\end{definition}
\begin{proposition}[Boundary conditions]\label{prop:boundary}
For an index $\bk$, we have
\begin{equation}\label{eq:1st_boundary}
Z_{N,t}(\bk_{\uparrow};\varnothing)=G_N(\bk;t)
\end{equation}
and
\begin{equation}\label{eq:2nd_boundary}
Z_{N,t}((1);\bk)=F_N(\bk;t).
\end{equation}
\end{proposition}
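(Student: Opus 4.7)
The plan is to verify each boundary identity by direct computation, reducing each to an elementary combinatorial-hypergeometric identity.

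For equation \eqref{eq:1st_boundary}, I would observe that both $Z_{N,t}(\bk_{\uparrow};\varnothing)$ and $G_N(\bk;t)$ range over the same index set: subsets $A$ of $[r]^1_{\bk_{\uparrow}}=[r]^1_{\bk}\setminus\{r\}$ (since the last entry of $\bk_{\uparrow}$ is $k_r+1\geq 2$) together with tuples $(n_1,\dots,n_r)\in S^{\star}_{r,N}(A)$. Because $r\notin A$ in every term, the first argument of the connector is simply $n_r$. Hence the identity reduces to the pointwise equality
\[
\frac{C_{N,t}(n_r,N-1)}{n_r}=(-1)^{n_r-1}\binom{t}{n_r}\frac{(1-N)_{n_r}}{(1-N-t)_{n_r}},
\]
which follows from splitting $(1-N-t)_{n_r+N-1}=(1-N-t)_{N-1}(-t)_{n_r}$ via $(a)_{m+n}=(a)_m(a+m)_n$ and then using $(-t)_{n_r}=(-1)^{n_r}n_r!\binom{t}{n_r}$ together with $(1-N)_{n_r}=(-1)^{n_r}n_r!\binom{N-1}{n_r}$.

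For equation \eqref{eq:2nd_boundary}, I plan to swap the order of summation and first evaluate, for each fixed $B\subset[s]$ and $(m_1,\dots,m_s)\in\overline{S}_{s,N}(B)$, the inner sum over $A'\subset\{1\}$ and $n_1\in[N-1]$. The goal is to show that this inner sum equals $1$ identically, after which the remaining outer sum is literally the defining expression of $F_N(\bk;t)$. One splits into the two cases $1\notin B$ and $1\in B$, and in each case rewrites every connector using the reflection identities $C_{N,t}(N-n+t,m)=\frac{N-n+t}{N-n-m+t}C_{N,t}(n,m)$ and $C_{N,t}(N-n+t,N-m+t)=\frac{(N-n+t)(N-m+t)}{m(N-n-m+t)}C_{N,t}(n,m)$. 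After this simplification, the inner sums become hypergeometric sums of the shape $\sum_{n_1\geq 0}\frac{(-m_1)_{n_1}(a)_{n_1}}{(b)_{n_1}\,n_1!}$ with $b-a\in\{-m_1,1-m_1\}$, to which the Chu--Vandermonde identity $\sum_{n\geq 0}\frac{(-m)_n(a)_n}{(b)_n n!}=\frac{(b-a)_m}{(b)_m}$ applies directly.

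The main obstacle will be the case $1\in B$. Here both $A'=\varnothing$ and $A'=\{1\}$ contribute via different reflections, so two separate Chu--Vandermonde evaluations are needed, and the two contributions only combine to $1$ after invoking the cancellation $(1-N-t)_{m_1}=-(N-m_1+t)(1-N-t)_{m_1-1}$. A subtle bookkeeping point is that the natural Chu--Vandermonde sums start from $n=0$ whereas $Z_{N,t}((1);\bk)$ sums only over $n_1\geq 1$; the missing $n_1=0$ term contributes exactly the extra $1$ that survives in the end. Once this accounting is done carefully in both cases, both boundary identities drop out.
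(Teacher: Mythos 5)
Your proposal is correct and follows the paper's own proof almost exactly: for \eqref{eq:1st_boundary} the paper likewise reduces to the pointwise identity $C_{N,t}(n_r,N-1)\tfrac{1}{n_r}=(-1)^{n_r-1}\binom{t}{n_r}\tfrac{(1-N)_{n_r}}{(1-N-t)_{n_r}}$ together with $[r]^1_{\bk_{\uparrow}}=[r]^1_{\bk}\setminus\{r\}$, and for \eqref{eq:2nd_boundary} it likewise shows that the inner sum over $n_1$ and $A'\subset\{1\}$ equals $1$, split according to whether the second connector argument is $m_1$ or $N-m_1+t$. The only divergence is the closing evaluation: you invoke Chu--Vandermonde, whereas the paper reduces both cases to $\sum_{n=0}^{m}(-1)^n\binom{m}{n}\tfrac{(1-N-t)_{n+m-1}}{(1-N-t)_n}=0$, which holds because $\tfrac{(1-N-t)_{n+m-1}}{(1-N-t)_n}$ is a polynomial of degree $m-1$ in $n$; both routes work, the one small wrinkle in yours being that the reflected terms $C_{N,t}(N-n+t,\cdot)\tfrac{1}{N-n+t}$ carry an extra factor of $n$, so they attain Chu--Vandermonde shape only after absorbing $n\binom{m}{n}=m\binom{m-1}{n-1}$ and shifting the summation index.
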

\begin{proof}
The equality \eqref{eq:1st_boundary} follows from
\[
C_{N,t}(n_r,N-1)\frac{1}{n_r}=(-1)^{n_r-1}\binom{t}{n_r}\frac{(1-N)_{n_r}}{(1-N-t)_{n_r}}
\]
and $[r]^1_{\bk_{\uparrow}}=[r]^1_{\bk}\setminus\{r\}$.
By the definition of the connected sum, in order to prove \eqref{eq:2nd_boundary}, it suffices to check that
\[
\sum_{n=1}^mC_{N,t}(n,m)\frac{1}{n}+\sum_{n=1}^mC_{N,t}(N-n+t,m)\frac{1}{N-n+t}=1
\]
and
\[
\sum_{n=1}^{m-1}C_{N,t}(n,N-m+t)\frac{1}{n}+\sum_{n=1}^mC_{N,t}(N-n+t,N-m+t)\frac{1}{N-n+t}=1
\]
for a positive integer $m$.
These two equalities are both equivalent to
\[
\sum_{n=0}^m(-1)^n\binom{m}{n}\frac{(1-N-t)_{n+m-1}}{(1-N-t)_n}=0
\]
and this follows from
\[
\sum_{n=0}^m(-1)^n\binom{m}{n}n^a=0
\]
for any $a\in[m-1]_0$.
\end{proof}
\begin{lemma}\label{lem:1st_transport}
For positive integers $n, m<N$, we have
\begin{align}
&C_{N,t}(n,m)\frac{1}{n}=\sum_{n\leq k\leq m}C_{N,t}(n,k)\frac{1}{k}-\sum_{n<k\leq m}C_{N,t}(n,N-k+t)\frac{1}{N-k+t},\label{eq:1st1}\\
&C_{N,t}(n,N-m+t)\frac{1}{n}=\sum_{n\leq k<m}C_{N,t}(n,k)\frac{1}{k}-\sum_{n<k<m}C_{N,t}(n,N-k+t)\frac{1}{N-k+t},\label{eq:1st2}\\
&0=\sum_{n\leq k\leq m}C_{N,t}(N-n+t,k)\frac{1}{k}-\sum_{n\leq k\leq m}C_{N,t}(N-n+t,N-k+t)\frac{1}{N-k+t},\label{eq:1st3}\\
&0=\sum_{n\leq k<m}C_{N,t}(N-n+t,k)\frac{1}{k}-\sum_{n\leq k<m}C_{N,t}(N-n+t,N-k+t)\frac{1}{N-k+t}.\label{eq:1st4}
\end{align}
\end{lemma}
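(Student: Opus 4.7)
My plan is to reduce each of the four identities to an elementary rational identity in the variables $n$, $k$, $m$, $N$, $t$, using the explicit formula
\[
C_{N,t}(n,k) = (-1)^{n-1} n \binom{k}{n} \frac{(1-N-t)_{n+k}}{(1-N-t)_n (1-N-t)_k}.
\]
The identities \eqref{eq:1st3} and \eqref{eq:1st4} will in fact hold \emph{term by term}, while \eqref{eq:1st1} and \eqref{eq:1st2} will follow from a single telescoping relation.

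For \eqref{eq:1st1} and \eqref{eq:1st2}, I would first rewrite the second sum on the right using $C_{N,t}(n, N-k+t) = C_{N,t}(n, k-1)$, which reduces both identities to the ``one-step'' relation
\[
\frac{C_{N,t}(n,k)}{n} - \frac{C_{N,t}(n,k-1)}{n} = \frac{C_{N,t}(n,k)}{k} - \frac{C_{N,t}(n,k-1)}{N-k+t} \qquad (k \geq n+1).
\]
The key computation is the ratio
\[
\frac{C_{N,t}(n,k)}{C_{N,t}(n,k-1)} = \frac{k(n+k-N-t)}{(k-n)(k-N-t)},
\]
which follows from combining $(1-N-t)_{n+k}/(1-N-t)_{n+k-1} = n+k-N-t$, $(1-N-t)_{k-1}/(1-N-t)_k = 1/(k-N-t)$, and $\binom{k}{n}/\binom{k-1}{n} = k/(k-n)$; substituting this ratio into the one-step relation reduces it to an elementary algebraic identity. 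Summing the one-step relation from $k=n+1$ to $m$ and separating the $k=n$ term $C_{N,t}(n,n)/n$ from the first sum on the right yields \eqref{eq:1st1}, and summing instead from $k = n+1$ to $m-1$ yields \eqref{eq:1st2}.

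For \eqref{eq:1st3} and \eqref{eq:1st4}, the definitions of $C_{N,t}(N-n+t, k)$ and $C_{N,t}(N-n+t, N-k+t)$ give at once
\[
\frac{C_{N,t}(N-n+t, k)}{k} = \frac{N-n+t}{k(N-n-k+t)} C_{N,t}(n,k) = \frac{C_{N,t}(N-n+t, N-k+t)}{N-k+t},
\]
so that each summand of the first sum cancels the corresponding summand of the second. The entire proof is thus purely computational, and the main obstacle is essentially nonexistent; the only care required is in the boundary cases (the empty sums when $m = 1$ in \eqref{eq:1st2} and \eqref{eq:1st4}), which are consistent with the stipulated convention $C_{N,t}(n, N-1+t) = C_{N,t}(n, 0) = 0$.
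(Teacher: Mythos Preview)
Your proposal is correct and follows essentially the same route as the paper: the paper proves \eqref{eq:1st1}--\eqref{eq:1st2} by the single telescoping identity
\[
\bigl(C_{N,t}(n,k)-C_{N,t}(n,k-1)\bigr)\frac{1}{n}=C_{N,t}(n,k)\frac{1}{k}-C_{N,t}(n,k-1)\frac{1}{N-k+t},
\]
and \eqref{eq:1st3}--\eqref{eq:1st4} by the term-by-term equality $C_{N,t}(N-n+t,k)/k=C_{N,t}(N-n+t,N-k+t)/(N-k+t)$, exactly as you do. Your additional explicit ratio computation and attention to the boundary case $m=1$ simply flesh out what the paper leaves to the reader.
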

\begin{proof}
The first two equalities \eqref{eq:1st1} and \eqref{eq:1st2} follow from
\[
\Bigl(C_{N,t}(n,k)-C_{N,t}(n,k-1)\Bigr)\frac{1}{n}=C_{N,t}(n,k)\frac{1}{k}-C_{N,t}(n,k-1)\frac{1}{N-k+t}.
\]
The latter equalities \eqref{eq:1st3} and \eqref{eq:1st4} follow from
\[
C_{N,t}(N-n+t,k)\frac{1}{k}=C_{N,t}(N-n+t,N-k+t)\frac{1}{N-k+t}.\qedhere
\]
\end{proof}
\begin{lemma}\label{lem:2nd_transport}
For positive integers $n, m<N$, we have
\begin{align}
&\sum_{n\leq k\leq m}C_{N,t}(k,m)\frac{1}{k}+\sum_{n<k\leq m}C_{N,t}(N-k+t,m)\frac{1}{N-k+t}=C_{N,t}(n,m)\frac{1}{m},\label{eq:2nd1}\\
&\sum_{n\leq k\leq m}C_{N,t}(k,m)\frac{1}{k}+\sum_{n\leq k\leq m}C_{N,t}(N-k+t,m)\frac{1}{N-k+t}=C_{N,t}(N-n+t,m)\frac{1}{m},\label{eq:2nd2}\\
&\sum_{n\leq k< m}C_{N,t}(k,N-m+t)\frac{1}{k}+\sum_{n<k\leq m}C_{N,t}(N-k+t,N-m+t)\frac{1}{N-k+t}\label{eq:2nd3}\\
&=C_{N,t}(n,N-m+t)\frac{1}{N-m+t},\notag\\
&\sum_{n\leq k<m}C_{N,t}(k,N-m+t)\frac{1}{k}+\sum_{n\leq k\leq m}C_{N,t}(N-k+t,N-m+t)\frac{1}{N-k+t}\label{eq:2nd4}\\
&=C_{N,t}(N-n+t,N-m+t)\frac{1}{N-m+t}.\notag
\end{align}
\end{lemma}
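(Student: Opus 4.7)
The plan is to reduce each of the four equalities to a single telescoping identity, exactly as in the proof of \cref{lem:1st_transport}, except that the telescoping now takes place in the \emph{first} argument of $C_{N,t}$ rather than the second.

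First, I establish the key identity
\[
\frac{C_{N,t}(k,m)-C_{N,t}(k+1,m)}{m}=\frac{C_{N,t}(k,m)}{k}+\frac{C_{N,t}(N-k-1+t,m)}{N-k-1+t}
\]
by direct computation. The check reduces to the ratio
\[
\frac{C_{N,t}(k+1,m)}{C_{N,t}(k,m)}=\frac{(k-m)(N-k-m-1+t)}{k(N-k-1+t)},
\]
which is immediate from the definition of $C_{N,t}$, the binomial identity $(k+1)\binom{m}{k+1}=(m-k)\binom{m}{k}$, and a short Pochhammer manipulation. Summing this identity for $k=n,n+1,\dots,m-1$, the left-hand side telescopes to $(C_{N,t}(n,m)-C_{N,t}(m,m))/m$, while the right-hand side collects into $\sum_{n\leq k\leq m-1}C_{N,t}(k,m)/k+\sum_{n<k\leq m}C_{N,t}(N-k+t,m)/(N-k+t)$. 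Adding $C_{N,t}(m,m)/m$ to both sides yields \eqref{eq:2nd1}. I then derive \eqref{eq:2nd2} from \eqref{eq:2nd1} by taking their difference: the extra term on the left is $C_{N,t}(N-n+t,m)/(N-n+t)$, and the discrepancy on the right is $(C_{N,t}(N-n+t,m)-C_{N,t}(n,m))/m$; the equality of these two is immediate from the definition $C_{N,t}(N-n+t,m)=\frac{N-n+t}{N-n-m+t}C_{N,t}(n,m)$.

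For \eqref{eq:2nd3} and \eqref{eq:2nd4} the argument is completely parallel. The analogous telescoping identity is
\[
\frac{C_{N,t}(k,N-m+t)-C_{N,t}(k+1,N-m+t)}{N-m+t}=\frac{C_{N,t}(k,N-m+t)}{k}+\frac{C_{N,t}(N-k-1+t,N-m+t)}{N-k-1+t},
\]
verified by the same rational-function check after using $C_{N,t}(n,N-m+t)=C_{N,t}(n,m-1)$ to convert both sides to expressions in $C_{N,t}(\cdot,m-1)$. Summing for $k=n,\dots,m-1$ telescopes cleanly \emph{without} any surviving boundary contribution, since $C_{N,t}(m,N-m+t)=C_{N,t}(m,m-1)=0$ (as $\binom{m-1}{m}=0$), producing \eqref{eq:2nd3}. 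Finally, \eqref{eq:2nd4} follows from \eqref{eq:2nd3} by the same difference argument as for \eqref{eq:2nd2}, now using the connector identity $C_{N,t}(N-n+t,N-m+t)=\frac{(N-n+t)(N-m+t)}{m(N-n-m+t)}C_{N,t}(n,m)$.

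The main obstacle is purely the verification of the two telescoping identities: both are instances of a single rational-function equality once one expands $C_{N,t}$ via its definition, but the bookkeeping of Pochhammer ratios, binomial identities and signs has to be done carefully. Once those two identities are in hand, each of \eqref{eq:2nd1}--\eqref{eq:2nd4} is either a direct telescoping sum or a one-line consequence of a connector identity, mirroring the structure of \cref{lem:1st_transport}.
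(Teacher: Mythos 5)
Your proposal is correct and follows essentially the same route as the paper: the paper also reduces \eqref{eq:2nd1} and \eqref{eq:2nd3} to single telescoping identities in the first argument of $C_{N,t}$ (your two key identities are exactly the paper's, after the index shift $k\mapsto k+1$), and likewise obtains \eqref{eq:2nd2} and \eqref{eq:2nd4} as immediate consequences. Your write-up merely makes explicit the boundary terms ($C_{N,t}(m,m)/m$ and $C_{N,t}(m,m-1)=0$) and the one-line connector computations that the paper leaves to the reader.
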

\begin{proof}
The first equality \eqref{eq:2nd1} follows from
\[
C_{N,t}(k-1,m)\frac{1}{k-1}+C_{N,t}(N-k+t,m)\frac{1}{N-k+t}=\Bigl(C_{N,t}(k-1,m)-C_{N,t}(k,m)\Bigr)\frac{1}{m}.
\]
The second equality \eqref{eq:2nd2} is an immediate consequence of \eqref{eq:2nd1}.
The third equality \eqref{eq:2nd3} follows from
\begin{align*}
&C_{N,t}(k-1,m-1)\frac{1}{k-1}+\frac{N-m+t}{m(N-k-m+t)}C_{N,t}(k,m)\\
&=\Bigl(C_{N,t}(k-1,m-1)-C_{N,t}(k,m-1)\Bigr)\frac{1}{N-m+t}.
\end{align*}
The final equality \eqref{eq:2nd4} is an immediate consequence of \eqref{eq:2nd3}.
\end{proof}

\begin{theorem}[Transport relations]\label{thm:transport}
Let $\bk$ and $\bl$ be indices.
Then, we have
\begin{equation}\label{eq:1st_transport_Z}
Z_{N,t}(\bk_{\uparrow};\bl)=Z_{N,t}(\bk;{}_{\leftarrow}\bl)
\end{equation}
and
\begin{equation}\label{eq:2nd_transport_Z}
Z_{N,t}(\bk_{\to};\bl)=Z_{N,t}(\bk;{}_{\uparrow}\bl).
\end{equation}
For \eqref{eq:1st_transport_Z},
\[
Z_{N,t}(\bk_{\uparrow};\varnothing)=Z_{N,t}(\bk;(1))
\]
also holds.
\end{theorem}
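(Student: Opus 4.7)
The plan is to prove both transport relations by a direct expansion-and-matching argument: split the connected sum on one side into finitely many cases according to whether the last left position belongs to $A$ and whether the first right position belongs to $B$, then apply one of the four identities in Lemma \ref{lem:1st_transport} or Lemma \ref{lem:2nd_transport} in each case to move a factor across the connector. After the transport, the summand has the shape of the connected sum on the other side of the claimed equality, and collecting cases recovers the right-hand side.

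For \eqref{eq:1st_transport_Z}, the last component of $\bk_\uparrow$ equals $k_r+1\ge 2$, so position $r$ is never in $A$, and the factor at the last left position is $1/n_r^{k_r+1}=(1/n_r^{k_r})\cdot(1/n_r)$. Depending on whether $1\in B$ or not, the connector is $C_{N,t}(n_r,m_1)$ or $C_{N,t}(n_r,N-m_1+t)$; apply \eqref{eq:1st1} in the former case and \eqref{eq:1st2} in the latter to rewrite $C_{N,t}(n_r,\cdot)\cdot(1/n_r)$ as a sum of two new terms indexed by a new variable $k$. This $k$ plays the role of the new first right variable $m_0$ attached to the prepended $1$ in ${}_\leftarrow\bl$: the $C_{N,t}(n_r,k)\cdot(1/k)$ term contributes to $1\notin B'$, while the $-C_{N,t}(n_r,N-k+t)\cdot(1/(N-k+t))$ term contributes to $1\in B'$ (with its minus sign absorbed into $(-1)^{\#B'}$). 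The strict/non-strict inequalities between $n_r$ and $k$ in the transport identities are exactly those enforced by the vanishing of $C_{N,t}$ on the right-hand side (using $C_{N,t}(n,N-k+t)=C_{N,t}(n,k-1)$), while the inequalities between $k$ and $m_1$ match the constraints of $\overline{S}_{s+1,N}(B')$. The boundary case $Z_{N,t}(\bk_\uparrow;\varnothing)=Z_{N,t}(\bk;(1))$ is the same calculation applied with $m=N-1$, which is exactly the connector appearing in the definition of $Z_{N,t}(\bk_\uparrow;\varnothing)$.

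For \eqref{eq:2nd_transport_Z}, the last component of $\bk_\to$ equals $1$, so position $r+1$ may or may not be in $A$; correspondingly the factor $1/n_{r+1}$ (if $r+1\notin A$) or $1/(N-n_{r+1}+t)$ (if $r+1\in A$) is present, and the variable $n_{r+1}$ must be summed out. Collecting terms for each fixed $A_r\coloneqq A\cap[r]\subset[r]^1_{\bk}$, the sum over $n_{r+1}$ is the left-hand side of one of the four identities in Lemma \ref{lem:2nd_transport}: \eqref{eq:2nd1} when $r\notin A_r$ and $1\notin B$ (so the branch $r+1\in A$ carries the strict inequality $n_r<n_{r+1}$), \eqref{eq:2nd2} when $r\in A_r$ and $1\notin B$ (so $n_r\le n_{r+1}$ in both branches), and \eqref{eq:2nd3}, \eqref{eq:2nd4} for the analogous cases with $1\in B$. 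Each identity collapses the $n_{r+1}$-sum to a single connector factor of the form $C_{N,t}(\cdot,m_1)\cdot(1/m_1)$ or $C_{N,t}(\cdot,N-m_1+t)\cdot(1/(N-m_1+t))$; the extra $1/m_1$ (resp.\ $1/(N-m_1+t)$) raises the exponent $l_1$ to $l_1+1$, which is exactly the factor appearing at the first right position of ${}_\uparrow\bl$, while $C_{N,t}(n_r,\cdot)$ or $C_{N,t}(N-n_r+t,\cdot)$ is the new connector compatible with $\bk$ on the left.

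The main obstacle is bookkeeping rather than any conceptual difficulty: one must track carefully that the strict and non-strict inequalities produced by each transport identity correspond exactly to those prescribed by $S^\star$ and $\overline{S}$ on the opposite side, and that the $(-1)^{\#B}$ sign on the right half absorbs the minus sign in the identities of Lemma \ref{lem:1st_transport}. Once the case analysis is in place, each case is a one-line application of a numerical identity, and summation over all cases yields the theorem.
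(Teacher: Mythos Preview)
Your overall strategy is exactly the one the paper intends: reduce each transport relation to a case-by-case application of the four identities in Lemma~\ref{lem:1st_transport} (for \eqref{eq:1st_transport_Z}) and Lemma~\ref{lem:2nd_transport} (for \eqref{eq:2nd_transport_Z}), tracking the resulting ranges against the constraints coming from $S^\star$, $\overline S$, and the vanishing of the connector. Your treatment of \eqref{eq:2nd_transport_Z} is complete and correct: the four cases $(r\in A_r\text{ or not})\times(1\in B\text{ or not})$ match \eqref{eq:2nd1}--\eqref{eq:2nd4} as you describe.

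There is, however, a genuine gap in your argument for \eqref{eq:1st_transport_Z}. You invoke only \eqref{eq:1st1} and \eqref{eq:1st2}, which transport $C_{N,t}(n_r,\cdot)\cdot(1/n_r)$ from the left side; the terms you produce on the right-hand side $Z_{N,t}(\bk;{}_{\leftarrow}\bl)$ all have connector first argument $n_r$, i.e.\ they correspond to $r\notin A'$ where $A'\subset[r]^1_{\bk}$. But when $k_r=1$, the set $[r]^1_{\bk}$ strictly contains $[r]^1_{\bk_\uparrow}$, and $Z_{N,t}(\bk;{}_{\leftarrow}\bl)$ also has terms with $r\in A'$, whose connector has first argument $N-n_r+t$. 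Your transport from the left does not manufacture these terms, so your case analysis does not yet establish equality.

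The fix is exactly what the remaining two identities \eqref{eq:1st3} and \eqref{eq:1st4} are there for. For each fixed $n_r$ and fixed data on positions $2,\dots,s+1$ of ${}_\leftarrow\bl$, sum the $r\in A'$ contributions over the new variable $m_0$ and over whether the prepended position lies in $B'$: the two pieces combine (with the $(-1)^{\#B'}$ sign) into the left-hand side of \eqref{eq:1st3} when the next right position is not in $B'$, or of \eqref{eq:1st4} when it is, and both vanish. The same remark applies verbatim to the boundary case $Z_{N,t}(\bk_\uparrow;\varnothing)=Z_{N,t}(\bk;(1))$ with $m=N-1$. Once you add this observation, your proof is complete and coincides with the paper's.
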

\begin{proof}
The transport relation \eqref{eq:1st_transport_Z} follows from \cref{lem:1st_transport}.
The transport relation \eqref{eq:2nd_transport_Z} follows from \cref{lem:2nd_transport}.
\end{proof}
\cref{thm:F=G} follows from \cref{thm:transport} and the boundary conditions (\cref{prop:boundary}) in exactly the same way as in \cite[Section~3]{SekiYamamoto2020} and \cite[Section~6]{Seki2020}.

\end{document}